\theoremstyle{plain}
\newtheorem{theorem}{Theorem}[section]
\newtheorem{proposition}{Proposition}[section]
\newtheorem{lemma}{Lemma}[section]
\newtheorem{corollary}{Corollary}[section]
\newtheorem{definition}{Definition}[section]
\newtheorem{remark}{Remark}[section]
\renewcommand{\eqref}[1]{\textnormal{(\ref{#1})}}
\numberwithin{equation}{section}
\newcommand{\rmd}{\mathrm{d}}
\newcommand{\R}{\mathbb{R}}
\def\Oh{{\mathcal O}}
\newcommand{\bl}{\color{black}}
\begin{document}

\title[Geometric structures of conductive transmission eigenfunctions]{On the geometric structures of {\bl  transmission eigenfunctions with a conductive boundary condition and  applications}}

\author{Huaian Diao}
\address{School of Mathematics and Statistics, Northeast Normal University,
Changchun, Jilin 130024, China.}
\email{hadiao@nenu.edu.cn}

\author{Xinlin Cao}
\address{Department of Mathematics, Hong Kong Baptist University, Kowloon, Hong Kong, China.}
\email{xlcao.math@foxmail.com}

\author{Hongyu Liu}
\address{Department of Mathematics, City University of Hong Kong, Kowloon, Hong Kong, China.}
\email{hongyu.liuip@gmail.com, hongyliu@cityu.edu.hk}


\begin{abstract}
This paper is concerned with the intrinsic geometric structures of conductive transmission eigenfunctions. The geometric properties of interior transmission eigenfunctions were first studied in \cite{BL2017b}. It is shown in two scenarios that the interior transmission eigenfunction must be locally vanishing near a corner of the domain with an interior angle less than $\pi$. We significantly extend and generalize those results in several aspects. First, we consider the conductive transmission eigenfunctions which include the interior transmission eigenfunctions as a special case. The geometric structures established for the conductive transmission eigenfunctions in this paper include the results in \cite{BL2017b} as a special case. Second, the vanishing property of the conductive transmission eigenfunctions is established for any corner as long as its interior angle is not $\pi$ when the conductive transmission eigenfunctions satisfy certain Herglotz functions approximation properties. That means, as long as the corner singularity is not degenerate, the vanishing property holds if the underlying conductive transmission eigenfunctions can be approximated by a sequence of Herglotz functions under mild approximation rates.  Third, the regularity requirements on the interior transmission eigenfunctions in \cite{BL2017b} are significantly relaxed in the present study for the conductive transmission eigenfunctions. In order to establish the geometric properties for the conductive transmission eigenfunctions, we develop technically new methods and the corresponding analysis is much more complicated than that in \cite{BL2017b}. Finally, as an interesting and practical application of the obtained geometric results, we establish a unique recovery result for the inverse problem associated with the transverse electromagnetic scattering by a single far-field measurement in simultaneously determining 
 a polygonal conductive obstacle and its surface conductive parameter.

\medskip

\noindent{\bf Keywords:}~~ Conductive transmission eigenfunctions, corner singularity, geometric structures, vanishing, inverse scattering, uniqueness, single far-field pattern. 

\noindent{\bf 2010 Mathematics Subject Classification:}~~ 35Q60, 78A46 (primary); 35P25, 78A05, 81U40 (secondary).

\end{abstract}

\maketitle

\section{Introduction}

Let $\Omega$ be a bounded Lipschitz domain in $\R^n$, $n=2, 3$, and $V\in L^\infty(\Omega )$ and $\eta\in L^\infty(\partial\Omega)$ be possibly complex-valued functions. Consider the following {\bl interior} transmission eigenvalue problem {\bl with a conductive boundary condition} for 
$v,\,w \in H^1(\Omega )$,
\begin{align}\label{eq:in eig}
\left\{
\begin{array}{l}
\Delta w+k^2(1+V) w=0 \quad\ \mbox{ in } \Omega, \\[5pt] 
\Delta v+ k^2  v=0\hspace*{1.85cm}\ \mbox{ in } \Omega, \\[5pt] 
w= v,\ \partial_\nu v + \eta v=\partial_\nu w \ \ \mbox{ on } \partial \Omega,
  \end{array}
\right.
\end{align}
where $\nu\in\mathbb{S}^{n-1}$ signifies the exterior unit normal vector to $\partial\Omega$. Clearly, $v=w\equiv 0$ are trivial solutions to \eqref{eq:in eig}. If for a certain $k\in\mathbb{R}_+$, there exists a pair of nontrivial solutions $(v, w)\in H^1(\Omega)\times H^1(\Omega)$ to \eqref{eq:in eig}, then $k$ is called a conductive transmission eigenvalue and $(v, w)$ is referred to as the corresponding pair of conductive transmission eigenfunctions. For a special case with $\eta\equiv 0$, \eqref{eq:in eig} is known to be the interior transmission eigenvalue problem. {\bl For terminological convenience, we refer to the nontrivial solutions $(v,\,w) \in H^1(\Omega )$ to  \eqref{eq:in eig} as the conductive transmission eigenfunctions and the corresponding $k$ is named as the conductive transmission eigenvalue.} The study on the transmission eigenvalue problems arises in the wave scattering theory and has a long and colourful history; see \cite{BP,CGH,CKP,CM,Kir,LV,PS,Robbiano,RS} for the spectral study of the interior transmission eigenvalue problem, and \cite{BHK,Bon,HK} for the related study of the conductive transmission eigenvalue problem, and a recent survey \cite{CHreview} and the references therein for comprehensive discussions on the state-of-the-art developments. The problem is a type of non-elliptic and non-self-adjoint eigenvalue problem, so its study is mathematically interesting and challenging. The existing results in the literature mainly focus on the spectral properties of the transmission eigenvalues, namely their existence, discreteness, infiniteness and Weyl's laws. Roughly speaking, the theorems for the transmission eigenvalues follow in a similar flavour to the results in the spectral theory of the Laplacian on a bounded domain. However, the transmission eigenfunctions reveal certain distinct and intriguing features. In \cite{BPS,PSV}, it is proved that the interior transmission eigenfunctions cannot be analytically extended across the boundary $\partial\Omega$ if it contains a corner with an interior angle less than $\pi$. In \cite{BL2017b}, geometric structures of interior transmission eigenfunctions were discovered for the first time. It is shown that under certain regularity conditions on the interior transmission eigenfunctions, the eigenfunctions must be locally vanishing near a corner of the domain with an interior angle less than $\pi$. With the help of numerics, it is further shown in \cite{BLLW,LW2018} that under the $H^1$-regularity of the interior transmission eigenfunctions, the eigenfunctions are either vanishing or localizing at a corner with an interior angle bigger than $\pi$. Recently, more geometric properties of the interior transmission eigenfunctions were discovered in \cite{BL2018,LW2018}, which are linked with the curvature of a specific boundary point. It is noted that a corner point considered in \cite{BL2017b,BLLW} can be regarded as having an infinite extrinsic curvature since the derivative of the normal vector has a jump singularity there. 

In addition to the angle of the corner, we would like to emphasize the critical role played by the regularity of the transmission eigenfunctions in the existing studies of the geometric structures in the aforementioned literatures. In \cite{BL2017b}, the regularity requirements are characterized in two ways. The first one is $H^2$-smoothness, and the other one is $H^1$-regularity with a certain Hergoltz approximation property. The $H^2$-regularity requirement can be weakened a bit to be H\"older-continuity with any H\"older index $\alpha\in (0,1)$.

In this paper, we establish the vanishing property of the conductive transmission eigenfunctions associated with \eqref{eq:in eig} at a corner as long as its interior angle is not $\pi$ {\bl when the conductive transmission eigenfunctions satisfy certain Herglotz functions approximation properties.} That means, as long as the corner singularity is not degenerate, the vanishing property holds {\bl if the underlying conductive transmission eigenfunctions can be approximated by a sequence of Herglotz functions under mild approximation rates.} In fact, in the three-dimensional case, the corner singularity is a more general edge singularity. To establish the vanishing property, we need to impose certain regularity conditions on the conductive transmission eigenfunctions which basically follow a similar manner to those considered in \cite{BL2017b}. That is, the first regularity condition is the H\"older-continuity with any H\"older index $\alpha\in (0,1)$, and the second regularity condition is characterized by the Herglotz approximation. Nevertheless, for the latter case, the regularity requirement is much more relaxed in the present study compared to that in \cite{BL2017b}. Finally, we would like to emphasize that in principle, the geometric properties established for the conductive transmission eigenfunctions include the results in \cite{BL2017b} as a special case by taking the parameter $\eta$ to be zero. Hence, in the sense described above, the results obtained in this work significantly extend and generalize the ones in \cite{BL2017b}.

The mathematical argument in \cite{BL2017b} is indirect which connects the vanishing property of the interior transmission eigenfunctions with the stability of a certain wave scattering problem with respect to variation of the wave field at the corner point. In \cite{Bsource,BL2018}, direct mathematical arguments based on certain microlocal analysis techniques are developed for dealing with the vanishing properties of the interior transmission eigenfunctions. However, the H\"older continuity on the interior transmission eigenfunctions is an essential assumption in \cite{Bsource,BL2018}. In this paper, in order to establish the vanishing property of the conductive transmission eigenfunctions under more general regularity conditions, we basically follow the direct approach. But we need to develop technically new ingredients for this different type of eigenvalue problem and the corresponding analysis becomes radically much more complicated. 

As an interesting and practical application, we apply the obtained geometric results for the conductive transmission eigenfunctions to an inverse problem associated with the transverse electromagnetic scattering. In a certain scenario, we establish the unique recovery result by a single far-field measurement in simultaneously determining a polygonal  conductive obstacle and its surface conductivity. This contributes to the well-known Schiffer's problem in the inverse scattering theory which is concerned with recovering the shape of an unknown scatterer by a single far-field pattern; see \cite{AR,BL2016,BL2017,CY,HSV,Liua,LPRX,LRX,Liu-Zou,Liu-Zou3,Ron2} and the references therein for background introduction and the state-of-the-art developments on the Schiffer's problem.

The rest of the paper is organized as follows. In Sections \ref{sec:2}  and \ref{sec:3}, we respectively derive the vanishing results of the conductive transmission eigenfunctions near a corner in the two-dimensional and three-dimensional cases. Section \ref{sec:4} is devoted to the uniqueness study in determining a polygonal conductive obstacle as well as its surface conductivity by a single far-field pattern. 

\section{ Vanishing near corners of conductive transmission eigenfunctions: two-dimensional case}\label{sec:2}

In this section, we consider the vanishing near corners of conductive transmission eigenfunctions in the two-dimensional case. First, let us introduce some notations for the subsequent use. Let $(r,\theta)$ be the polar coordinates in $\R^2$; that is, $x=(x_1,x_2)=(r\cos\theta,r\sin\theta)\in\mathbb{R}^2$. For $x\in\mathbb{R}^2$, $B_h(x)$ denotes an {\bl open} ball of radius $h\in\mathbb{R}_+$ and centered at $x$. $B_h:=B_h(0)$. Consider an open sector in $\mathbb{R}^2$ with the boundary $\Gamma^\pm $ as follows, 
\begin{equation}\label{eq:W}
	W=\Big \{ x\in \R^2 ~ |~  x\neq 0,\quad   \theta_m < {\rm arg}(x_1+ i x_2 ) < \theta_M \Big \},
\end{equation}
 where $-\pi < \theta_m < \theta_M < \pi$, $i:=\sqrt{-1}$ and $\Gamma^+$ and $\Gamma^-$ respectively correspond to $(r, \theta_M)$ and $(r,\theta_m)$ with $r>0$.  Henceforth, set
\begin{equation}\label{eq:sh}
	S_h= W\cap B_h,\, \Gamma_h^{\pm }= \Gamma^{\pm } \cap B_h,\, \overline  S_h=\overline{ W} \cap B_h, \, 	\Lambda_h={\bl W \cap \partial { B_h} }, \,  \ \mbox{and}\ \Sigma_{\Lambda_{h}} = S_h \backslash S_{h/2} . 
\end{equation}
In Figure \ref{fig1}, we give a schematic illustration of the geometry considered here.

{\bl For $g\in L^2(\mathbb{S}^{n-1})$, the Herglotz wave function with kernel $g$ is defined by}
\begin{equation}\label{eq:hergnew}
v(x)=\int_{{\mathbb S}^{n-1}} e^{i k \xi \cdot x} g(\xi ) {\rm d} \sigma(\xi ),\ \ \xi\in\mathbb{S}^{n-1},\quad x\in\mathbb{R}^n. 
\end{equation}
It can be easily seen that $v$ is an entire solution to the Helmholtz equation $\Delta v+k^2 v=0$. {\bl By Theorem 2 and Remark 2 in \cite{Wec}, we have the following Herglotz approximation result. 

\begin{lemma}\label{lem:Herg}
Let $\Omega \Subset \mathbb R^n$ be a bounded Lipschitz domain and  ${\mathbf H}_k$ be the space of all Herglotz wave functions of the form \eqref{eq:hergnew}. Define 
$$
{\mathbf S}_k(\Omega ) =  \{u\in C^\infty (\Omega)~|~ \Delta u+k^2u=0\}
$$
and 
$$
{\mathbf H}_k(\Omega ) =  \{u|_\Omega~|~ u\in {\mathbf H}_k\}. 
$$
Then  ${\mathbf H}_k(\Omega )$ is dense in ${\mathbf S}_k(\Omega )  \cap  L^2 ( \Omega )$ with respect to the topology induced by the $H^1(\Omega)$-norm. 
\end{lemma}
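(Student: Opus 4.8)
The plan is to recognize the statement as a Runge–type density theorem for Herglotz wave functions and to obtain it by the classical duality-plus-far-field scheme, the $H^1$-refinement of which is exactly the content of Weck's Theorem~2 and Remark~2. First I would fix $H^1(\Omega)$ as the ambient Hilbert space and regard $X:={\mathbf S}_k(\Omega)\cap L^2(\Omega)$, restricted to its elements of finite $H^1(\Omega)$-norm, as a subspace. Interior elliptic regularity gives that every $u$ with $\Delta u+k^2u=0$ in $\Omega$ is automatically smooth in $\Omega$, so all of the substance lies in the behaviour up to the Lipschitz boundary; this is precisely why the $H^1(\Omega)$-topology, rather than a weaker local one, is the delicate feature. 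By the standard characterization of density in a normed space, ${\mathbf H}_k(\Omega)$ is dense in $X$ with respect to the $H^1(\Omega)$-norm if and only if every bounded functional $\ell\in(H^1(\Omega))^\ast$ that annihilates ${\mathbf H}_k(\Omega)$ also annihilates $X$.

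So I would take such an $\ell$, represent it by $L^2$-data via the Riesz theorem as $\ell(u)=\int_\Omega(\mathbf{f}\cdot\nabla u+g\,u)\,\mathrm{d}x$ with $\mathbf{f}\in L^2(\Omega)^n$ and $g\in L^2(\Omega)$, and extend the associated distribution by zero to a compactly supported distribution $T=-\nabla\cdot(\mathbf{f}\,\mathbf{1}_\Omega)+g\,\mathbf{1}_\Omega$ on $\mathbb{R}^n$ with $\operatorname{supp}T\subset\overline\Omega$. Each plane wave $x\mapsto e^{ik\xi\cdot x}$ with $\xi\in\mathbb{S}^{n-1}$ is itself a Herglotz function (with a degenerate kernel), and the whole family \eqref{eq:hergnew} is generated by integrating such plane waves against $L^2(\mathbb{S}^{n-1})$-kernels; hence the hypothesis $\ell|_{{\mathbf H}_k(\Omega)}=0$ forces $\langle T,e^{ik\xi\cdot\,\cdot}\rangle=0$ for every $\xi\in\mathbb{S}^{n-1}$.

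The decisive step, which I expect to be the main obstacle, is to convert this ``vanishing on the sphere $|\xi|=k$'' into the vanishing of $\ell$ on all of $X$. Here I would form the radiating volume potential $w=\Phi_k\ast T$, where $\Phi_k$ is the outgoing fundamental solution of $\Delta+k^2$. Then $w$ solves the Helmholtz equation in $\mathbb{R}^n\setminus\overline\Omega$ and satisfies the Sommerfeld radiation condition, and its far-field pattern is, up to a fixed constant, the map $\hat{x}\mapsto\langle T,e^{-ik\hat{x}\cdot\,\cdot}\rangle$, which we have just shown to vanish identically. By Rellich's lemma together with unique continuation, $w\equiv0$ in the unbounded component of $\mathbb{R}^n\setminus\overline\Omega$. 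Propagating this across $\partial\Omega$ through the jump relations for the layer parts of $w$ and pairing against an arbitrary $u\in X$ via Green's identity then yields $\ell(u)=0$.

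The genuinely technical points are twofold: first, that the mere Lipschitz regularity of $\partial\Omega$ suffices to justify the traces and jump relations needed in the last step; and second, that the resulting approximation is valid in the strong $H^1(\Omega)$-norm and not only in $L^2(\Omega)$ or locally uniformly. It is precisely this $H^1$-strengthening that is supplied by the refinement recorded in Weck's Remark~2, which is why I would invoke \cite{Wec} at this stage rather than reproduce the boundary analysis in full. Combining the above, every $H^1$-bounded functional annihilating ${\mathbf H}_k(\Omega)$ annihilates $X$, and the asserted density of ${\mathbf H}_k(\Omega)$ in ${\mathbf S}_k(\Omega)\cap L^2(\Omega)$ follows.
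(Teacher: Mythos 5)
Your proposal is correct and in substance coincides with the paper's own treatment: the paper establishes this lemma purely by citing Theorem 2 and Remark 2 of \cite{Wec}, which is exactly where you also defer the technical core (the Lipschitz-boundary analysis and the $H^1(\Omega)$-strengthening), while your duality--far-field--Rellich outline is a faithful sketch of how that cited result is proved. One small point of wording: a single plane wave $e^{ik\xi\cdot x}$ is not itself an element of ${\mathbf H}_k$ (its kernel would be a Dirac mass, not an $L^2(\mathbb{S}^{n-1})$ function), so the vanishing of $\xi\mapsto\langle T,e^{ik\xi\cdot(\cdot)}\rangle$ should instead be deduced by a Fubini-type exchange from $\ell(v_g)=0$ for all $g\in L^2(\mathbb{S}^{n-1})$, followed by continuity in $\xi$.
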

\begin{remark}
	From Lemma \ref{lem:Herg}, for any $v \in H^1(\Omega )$ being a solution to the Helmholtz equation in $\Omega$, we can conclude that there exists a sequence of Herglotz functions which can approximate $v$ to an arbitrary accuracy. 
	\end{remark}
}

\begin{figure}
  \centering
  \includegraphics[width=0.35\textwidth]{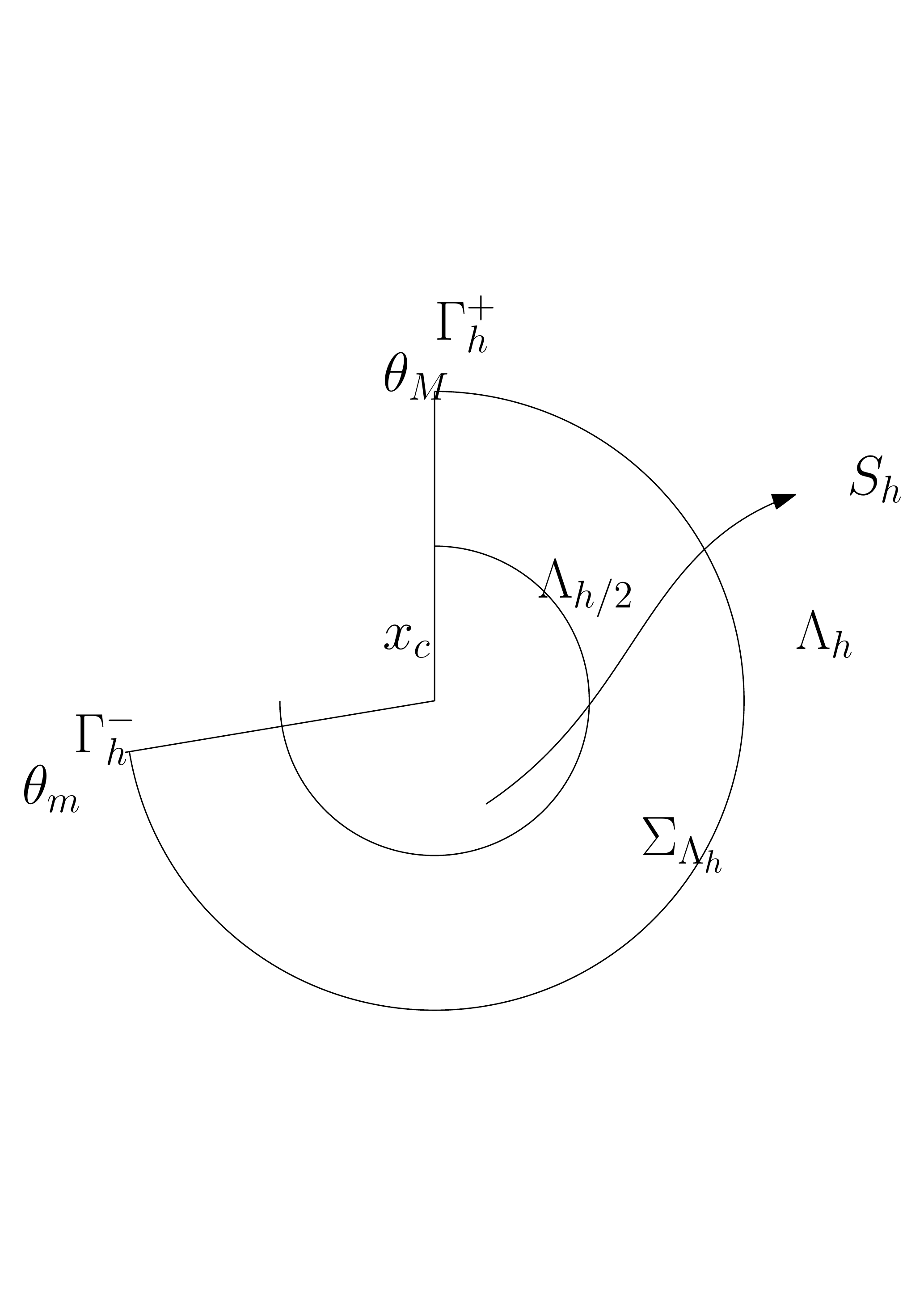}
  \caption{Schematic illustration of the corner in 2D.}
  \label{fig1}
\end{figure}

We shall also need the following lemma, which gives a particular type of planar complex geometrical optics (CGO) solution whose logarithm is a branch of the square root (cf. \cite{Bsource}). 

\begin{lemma}\label{lem:1}\cite[Lemma 2.2]{Bsource}
For $x\in \R^2$ denote $r=|x|,\, \theta={\rm arg}(x_1 +i x_2)$. Let $s\in \mathbb R_+$ and 
\begin{equation}\label{eq:u0}
	u_0(sx):= \exp \left( \sqrt {sr} \left(\cos \left(\frac{\theta}{2}+\pi\right) +i \sin \left(\frac{\theta}{2} +\pi\right)  \right ) \right) .
\end{equation}
Then $\Delta u_0=0$ in $\R^2\backslash\mathbb{R}_{0,-}^2  $, where $\R_{0,-}^2:=\{{ x}\in\mathbb{R}^2|{ x}=(x_1,x_2); x_1\leq 0, x_2=0\}$, and $s \mapsto u_0(sx) $ decays exponentially in $\R_+$.  Let $\alpha, s >0$. Then
\begin{equation}\label{eq:xalpha}
	\int_W |u_0(sx)| |x|^\alpha {\rm d} x \leq \frac{2(\theta_M-\theta_m )\Gamma(2\alpha+4) }{ \delta_W^{2\alpha+4}} s^{-\alpha-2},
\end{equation}
where $\delta_W=-\max_{ \theta_m < \theta <\theta_M }  \cos(\theta/2+\pi ) >0$. Moreover
\begin{equation}\label{eq:u0w}
	\int_W u_0(sx) {\rm d} x= 6 i (e^{-2\theta_M i }-e^{-2\theta_m i }  ) s^{-2},
	\end{equation}
	and for $h>0$
	\begin{equation}\label{eq:1.5}
	\int_{W \backslash B_h } |u_0(sx)|   {\rm d} x \leq \frac{6(\theta_M-\theta_m )}{\delta_W^4} s^{-2} e^{-\delta_W \sqrt{hs}/2}. 
	\end{equation}
\end{lemma}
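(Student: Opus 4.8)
The plan is to recast $u_0$ in complex-variable form and then reduce each of the three integral estimates to a one-dimensional Laplace-type integral. Setting $z=x_1+ix_2=re^{i\theta}$ with $\theta\in(-\pi,\pi)$ and using $\cos(\theta/2+\pi)+i\sin(\theta/2+\pi)=-e^{i\theta/2}$, the exponent in \eqref{eq:u0} equals $-\sqrt{sr}\,e^{i\theta/2}=-\sqrt{s}\,\sqrt{z}$, where $\sqrt{\cdot}$ is the principal branch of the square root, holomorphic on $\C\setminus(-\infty,0]$. Thus $u_0(sx)=\exp(-\sqrt{s}\,\sqrt{z})$ is a composition of holomorphic maps on $\R^2\setminus\R_{0,-}^2$, hence holomorphic and therefore harmonic there, which gives the first assertion. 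Taking the real part of the exponent yields $|u_0(sx)|=\exp(-\sqrt{sr}\cos(\theta/2))$; since $\theta/2\in(-\pi/2,\pi/2)$ for $\theta\in(\theta_m,\theta_M)$ we have $\cos(\theta/2)>0$, giving the exponential decay in $s$. Moreover $\delta_W=-\max_{\theta_m<\theta<\theta_M}\cos(\theta/2+\pi)=\min_{\theta_m\le\theta\le\theta_M}\cos(\theta/2)>0$, so that $|u_0(sx)|\le\exp(-\delta_W\sqrt{sr})$ on all of $W$; this pointwise bound drives the remaining estimates.

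For \eqref{eq:xalpha} I would pass to polar coordinates, writing the left-hand side as $\int_{\theta_m}^{\theta_M}\int_0^\infty|u_0(sx)|\,r^{\alpha+1}\,{\rm d}r\,{\rm d}\theta$, bound $\cos(\theta/2)\ge\delta_W$, and substitute $t=\sqrt{sr}$. The radial integral collapses to $\tfrac{2}{s^{\alpha+2}}\int_0^\infty e^{-\delta_W t}t^{2\alpha+3}\,{\rm d}t=\tfrac{2}{s^{\alpha+2}}\Gamma(2\alpha+4)\,\delta_W^{-(2\alpha+4)}$, and multiplying by the angular length $\theta_M-\theta_m$ reproduces the stated bound exactly. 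For the identity \eqref{eq:u0w} the same change of variables is used but the phase is \emph{retained}: the inner integral is $\int_0^\infty e^{-a\sqrt{r}}r\,{\rm d}r$ with $a=\sqrt{s}\,e^{i\theta/2}$, and the substitution $u=\sqrt{r}$ turns it into $2\int_0^\infty e^{-au}u^3\,{\rm d}u=12/a^4=12\,s^{-2}e^{-2i\theta}$, which is legitimate because $\mathrm{Re}\,a=\sqrt{s}\cos(\theta/2)>0$. Integrating $e^{-2i\theta}$ over $(\theta_m,\theta_M)$ and using $12/(-2i)=6i$ then gives $6i(e^{-2i\theta_M}-e^{-2i\theta_m})s^{-2}$, as claimed.

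Finally, \eqref{eq:1.5} is treated like \eqref{eq:xalpha} but with the radial integration starting at $r=h$; after the substitution $t=\sqrt{sr}$ the lower limit becomes $\sqrt{sh}$, leaving a constant multiple of $s^{-2}\int_{\sqrt{sh}}^\infty e^{-\delta_W t}t^3\,{\rm d}t$. To extract the advertised decay I would split $e^{-\delta_W t}=e^{-\delta_W t/2}\cdot e^{-\delta_W t/2}$, estimate the first factor by its endpoint value $e^{-\delta_W\sqrt{sh}/2}$, and bound the remaining $\int_0^\infty e^{-\delta_W t/2}t^3\,{\rm d}t$ by a constant times $\delta_W^{-4}$, which yields the factor $s^{-2}e^{-\delta_W\sqrt{hs}/2}$. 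The genuinely delicate points are the bookkeeping ones: one must check absolute convergence so that Fubini applies and the complex Laplace integral in \eqref{eq:u0w} is valid for $\mathrm{Re}\,a>0$, and keep the branch cut $\R_{0,-}^2$ outside the closed sector, which $-\pi<\theta_m<\theta_M<\pi$ guarantees. Pinning down the precise numerical constant in \eqref{eq:1.5} is the only step requiring a careful choice of the splitting parameter; the exponential factor and the $s^{-2}$ scaling, which are all the applications use, come out immediately.
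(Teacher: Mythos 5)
The paper offers no proof of this lemma at all: it is quoted verbatim, with citation, from \cite[Lemma 2.2]{Bsource}, so the only meaningful comparison is with the source's (standard) argument, which is the same elementary computation you carry out. Your treatment of the first three claims is correct and complete: writing $u_0(sx)=\exp(-\sqrt{s}\,\sqrt{z})$ with the principal branch gives harmonicity off $\R^2_{0,-}$ and the decay in $s$; polar coordinates, the pointwise bound $|u_0(sx)|\le e^{-\delta_W\sqrt{sr}}$ and the substitution $t=\sqrt{sr}$ reproduce \eqref{eq:xalpha} with the exact stated constant; and the rotated Laplace integral $\int_0^\infty e^{-au}u^3\,\mathrm{d}u=6/a^4$ for $\mathrm{Re}\,a>0$ (justified by analytic continuation, with Fubini secured by the $\alpha=0$ version of the absolute estimate) yields \eqref{eq:u0w} exactly, including the factor $6i$.

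The one point that needs correcting is your closing remark on \eqref{eq:1.5}. Your splitting $e^{-\delta_W t}=e^{-\delta_W t/2}e^{-\delta_W t/2}$ gives the estimate with constant $192$ in place of $6$, and you suggest the constant $6$ could be pinned down by a more careful choice of the splitting parameter. It cannot: the inequality \eqref{eq:1.5} with the explicit constant $6$ is false as stated. Take the thin symmetric sector $\theta_m=-\epsilon$, $\theta_M=\epsilon$, so that $\delta_W=\cos(\epsilon/2)\to1$, and choose $h,s$ with $\sqrt{hs}=1$. Since $|u_0(sx)|=e^{-\sqrt{sr}\cos(\theta/2)}\ge e^{-\sqrt{sr}}$ on $W$,
\[
\int_{W\setminus B_h}|u_0(sx)|\,\mathrm{d}x \;\ge\; 2\epsilon\int_h^\infty e^{-\sqrt{sr}}\,r\,\mathrm{d}r \;=\; \frac{4\epsilon}{s^2}\int_{1}^\infty e^{-t}t^3\,\mathrm{d}t \;=\;\frac{64}{e}\,\epsilon\, s^{-2}\;\approx\; 23.5\,\epsilon\, s^{-2},
\]
whereas the right-hand side of \eqref{eq:1.5} tends to $12\,e^{-1/2}\epsilon\, s^{-2}\approx 7.3\,\epsilon\, s^{-2}$ as $\epsilon\to0$. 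Equivalently, the inequality one would need, $e^{-y/2}(y^3+3y^2+6y+6)\le 3$ with $y=\delta_W\sqrt{hs}$, already fails at $y=0$ and only holds for $y$ large. So the defect lies in the quoted statement (the estimate is true only with a larger absolute constant, e.g.\ your $192$, or for $\delta_W\sqrt{hs}$ sufficiently large), not in your argument. Since the paper invokes \eqref{eq:1.5} only through the qualitative decay $\Oh(s^{-2}e^{-\delta_W\sqrt{hs}/2})$ as $s\to+\infty$, nothing downstream is affected; but your write-up should state the bound with an unspecified constant rather than promise that $6$ is recoverable.
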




{\bl The following lemma states the the regularity of the CGO solution $u_0(sx)$ defined in \eqref{eq:u0}.
\begin{lemma}\label{lem:23}
	Let $S_h$ be defined in \eqref{eq:sh} and $u_0(sx)$ be given by \eqref{eq:u0}. Then $u_0(sx) \in H^1(S_h)$ and $\Delta u_0 (sx)=0$ in $S_h$.  Furthermore, it holds that
	\begin{equation} \label{eq:u0L2}
	\|u_0(sx)\|_{L^2(S_h)}^2\leq 	\frac{ (\theta_M-\theta_m) e^{- 2\sqrt{s \Theta } \delta_W } h^2  }{2}
	\end{equation}
	and
	\begin{equation}\label{eq:22}
		\left  \||x|^\alpha  u_0(sx) \right \|_{L^{2}(S_h ) }^2 \leq s^{-(2\alpha+2 )} \frac{2(\theta_M-\theta_m)  }{(4\delta_W^2)^{2\alpha+2  } } \Gamma(4\alpha+4), 
	\end{equation}
where $ \Theta  \in [0,h ]$ and $\delta_W$ is defined in \eqref{eq:xalpha}. 
\end{lemma}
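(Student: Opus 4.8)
The plan is to read off everything from the explicit form of $u_0(sx)$ together with the integral bound \eqref{eq:xalpha} already established in Lemma \ref{lem:1}. First I would record the qualitative facts. Since $-\pi<\theta_m<\theta_M<\pi$, the punctured closed sector $\overline{S_h}\setminus\{0\}$ lies in $\R^2\setminus\R^2_{0,-}$, so $\Delta u_0(sx)=0$ in $S_h$ is inherited directly from Lemma \ref{lem:1}. Writing $z=x_1+ix_2=r\rme^{i\theta}$, the exponent in \eqref{eq:u0} equals $\sqrt{sr}\,\rme^{i(\theta/2+\pi)}=-\sqrt{s}\sqrt{z}$ for the principal branch $\sqrt{z}=\sqrt{r}\,\rme^{i\theta/2}$, so $u_0(sx)=\exp(-\sqrt{s}\sqrt{z})$ is holomorphic, hence $C^\infty$, on $S_h$.

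The single computational input is the pointwise bound $\abs{u_0(sx)}=\exp\!\big(\sqrt{sr}\cos(\theta/2+\pi)\big)\le \exp(-\sqrt{sr}\,\delta_W)$, immediate from the definition of $\delta_W$ in \eqref{eq:xalpha}. To prove \eqref{eq:u0L2} I would pass to polar coordinates, bounding
\[
\|u_0(sx)\|_{L^2(S_h)}^2\le(\theta_M-\theta_m)\int_0^h \rme^{-2\sqrt{sr}\,\delta_W}\,r\,\rmd r,
\]
and then apply the first mean value theorem for integrals: the kernel $\rme^{-2\sqrt{sr}\delta_W}$ is continuous and $r\ge 0$ does not change sign, so the kernel may be pulled out at some $\Theta\in[0,h]$, leaving $\int_0^h r\,\rmd r=h^2/2$. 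This reproduces \eqref{eq:u0L2} exactly.

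The key step for \eqref{eq:22} is the scaling identity $\abs{u_0(sx)}^2=\abs{u_0(4sx)}$, valid because squaring the modulus doubles the factor $\sqrt{sr}$ in the exponent, which is the same as replacing $s$ by $4s$. Hence $\big\||x|^\alpha u_0(sx)\big\|_{L^2(S_h)}^2=\int_{S_h}|x|^{2\alpha}\abs{u_0(4sx)}\,\rmd x\le\int_W \abs{u_0(4sx)}\,|x|^{2\alpha}\,\rmd x$, and invoking \eqref{eq:xalpha} with $\alpha$ replaced by $2\alpha$ and $s$ by $4s$ produces the factor $(4s)^{-2\alpha-2}=4^{-(2\alpha+2)}s^{-(2\alpha+2)}$; using $4^{2\alpha+2}\delta_W^{4\alpha+4}=(4\delta_W^2)^{2\alpha+2}$ then gives precisely \eqref{eq:22}.

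Finally, for the $H^1$ membership I would check square-integrability of the gradient. By holomorphy $|\nabla u_0(sx)|=\sqrt{2}\,|f'(z)|$ with $f(z)=\exp(-\sqrt{s}\sqrt{z})$ and $f'(z)=-\tfrac{\sqrt{s}}{2\sqrt{z}}f(z)$, so $|\nabla u_0(sx)|^2=\tfrac{s}{2r}\abs{u_0(sx)}^2$. Integrating in polar coordinates, the area element $r\,\rmd r\,\rmd\theta$ cancels the $r^{-1}$, leaving $\int_{S_h}|\nabla u_0(sx)|^2\,\rmd x\le \tfrac{s}{2}(\theta_M-\theta_m)\int_0^h \rme^{-2\sqrt{sr}\,\delta_W}\,\rmd r<\infty$, which together with \eqref{eq:u0L2} yields $u_0(sx)\in H^1(S_h)$. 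I do not expect a genuine obstacle here: the only delicate point is the $r^{-1/2}$ blow-up of $\nabla u_0$ at the corner $r=0$, but it is absorbed by the Jacobian and is harmless. The real content is thus the two recycling devices — the mean value theorem for \eqref{eq:u0L2} and the scaling identity $\abs{u_0(sx)}^2=\abs{u_0(4sx)}$ for \eqref{eq:22} — which let \eqref{eq:xalpha} do all the heavy lifting.
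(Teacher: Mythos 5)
Your proof is correct, and while the treatment of \eqref{eq:u0L2} coincides with the paper's (polar coordinates, the pointwise bound $|u_0(sx)|\le e^{-\sqrt{sr}\,\delta_W}$, and the first mean value theorem pulling the exponential out at some $\Theta\in[0,h]$), your two other steps take a genuinely different route. For \eqref{eq:22} the paper performs the change of variables $t=2\delta_W\sqrt{sr}$ directly in $\int_0^h r^{2\alpha+1}e^{-2\delta_W\sqrt{sr}}\,\rmd r$ and bounds the resulting incomplete Gamma integral by $\Gamma(4\alpha+4)$; your scaling identity $|u_0(sx)|^2=|u_0(4sx)|$ instead recycles \eqref{eq:xalpha} of Lemma \ref{lem:1} with $(\alpha,s)\mapsto(2\alpha,4s)$, and the bookkeeping $4^{2\alpha+2}\delta_W^{4\alpha+4}=(4\delta_W^2)^{2\alpha+2}$ reproduces the paper's constant exactly — a shorter argument that lets the earlier lemma do the work, at the cost of being slightly less self-contained. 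For the $H^1$ membership, the paper computes $\partial_r u_0$, $\partial_\theta u_0$ and then the Cartesian partials explicitly, whereas you exploit holomorphy of $f(z)=\exp(-\sqrt{s}\sqrt{z})$ and Cauchy--Riemann to get $|\nabla u_0|^2=2|f'(z)|^2=\tfrac{s}{2r}|u_0|^2$; this is the same formula the paper's explicit partials yield, so the two computations are equivalent in content, with yours somewhat slicker and the paper's additionally recording the explicit bound $\tfrac{(\theta_M-\theta_m)sh}{2}e^{-2\sqrt{s\vartheta}\delta_W}$ (not needed for the statement). Your observation that harmonicity on $S_h$ is inherited from Lemma \ref{lem:1} because $-\pi<\theta_m<\theta_M<\pi$ keeps $\overline{S_h}\setminus\{0\}$ off the branch cut is also the correct justification, and the $r^{-1/2}$ singularity of the gradient at the vertex is indeed absorbed by the Jacobian as you say.
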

\begin{proof}
	Recalling the expression of $u_0$ given in \eqref{eq:u0}, using change of variables and the integral mean value theorem, we can deduce that
\begin{align}\notag
	\|u_0(sx)\|_{L^2(S_h)}^2&=\int_{0}^h r {\rm d} r\int_{\theta_m}^{\theta_M} e^{2\sqrt{sr} \cos(\theta/2+\pi) }{\rm d} \theta \leq \int_{0}^h r {\rm d} r\int_{\theta_m}^{\theta_M} e^{-2\sqrt{sr} \delta_W }{\rm d} \theta \nonumber
	\\
	&=\frac{ (\theta_M-\theta_m) e^{- 2\sqrt{s \Theta } \delta_W } h^2  }{2}	, \notag \end{align}
where $\Theta  \in [0,h ]$ and $\delta_W$ is defined in \eqref{eq:xalpha}.  Furthermore, it can directly verified that
	\begin{equation}\notag
	\begin{split}
		\frac{\partial u_0(sx)}{\partial r}&=-\frac{ s^{1/2}}{2r^{1/2}} e^{-\sqrt{sr} (\cos( \theta/2)+i \sin (\theta/2 ) )+i \theta/2 },\\ \frac{\partial u_0(sx)}{\partial \theta}&=- \frac{i \sqrt{sr}}{2} e^{-\sqrt{sr} (\cos( \theta/2)+i \sin (\theta/2 ) )+i \theta/2 },
	\end{split}
	\end{equation}
	which can be used to obtain that
		\begin{equation}\notag
	\begin{split}
		\frac{\partial u_0(sx)}{\partial x_1}&=-\frac{ s^{1/2}}{2r^{1/2}} e^{-\sqrt{sr} (\cos( \theta/2)+i \sin (\theta/2 ) )-i \theta/2 },\\ \frac{\partial u_0(sx)}{\partial x_2}&=- \frac{i s^{1/2}}{2r^{1/2}} e^{-\sqrt{sr} (\cos( \theta/2)+i \sin (\theta/2 ) )-i \theta/2 }.
	\end{split}
	\end{equation}
	Therefore, it yields that
	$$
	\|\nabla u_{0}(sx)\|_{L^2(S_h) }^2 \leq \frac{(\theta_M-\theta_m)sh}{2}e^{-2\sqrt{s\vartheta}\delta_W}
	$$
	by the integral mean value theorem,
	where $\vartheta\in[0,h]$ and $\delta_W$ is defined in \eqref{eq:xalpha}. Hence, we know that $u_0(sx) \in H^1(S_h)$ and $\Delta u_0 (sx)=0$ in the weak sense. 
	
Using polar coordinates transformation, we can deduce that
\begin{align}\notag
	&\left  \||x|^\alpha  u_0(sx) \right \|_{L^{2}(S_h ) }^2=\int_0^h r {\rm d} r \int_{\theta_m }^{\theta_M} r^{2\alpha  } e^{2 \sqrt{sr} \cos (\theta/2+\pi) } {\rm d} \theta \nonumber \\
	 \leq & \int_0^h r {\rm d} r \int_{\theta_m }^{\theta_M} r^{2\alpha  } e^{-2 \sqrt{sr} \delta_W } {\rm d} \theta=(\theta_M-\theta_m ) \int_{0}^h r^{2\alpha+1} e^{-2 \delta_W \sqrt{sr} }{\rm d} r\quad (t=2 \delta_W\sqrt{sr}) \nonumber \\
	 = & s^{-(2\alpha+2 )} \frac{2(\theta_M-\theta_m)  }{(4\delta_W^2)^{2\alpha+2  } } \int_{0}^{2 \delta_W \sqrt{sh }} t^{4\alpha+3} e^{-t }{\rm d} r \leq s^{-(2\alpha+2 )} \frac{2(\theta_M-\theta_m)  }{(4\delta_W^2)^{2\alpha+2  } } \Gamma(4\alpha+4),  \notag
\end{align}
where $\delta_W$ is defined in \eqref{eq:xalpha}.  This completes the proof of the lemma.  
\end{proof}

\begin{lemma}\label{lem:zeta}
	For any $\zeta>0$, if $\omega(\theta ) >0 $, then
\begin{equation}\label{eq:zeta}
	 \int_{0}^h r^\zeta  e^{-\sqrt{sr} \omega(\theta)} {\rm d} r=\Oh( s^{-\zeta-1} ),
\end{equation}
as  $s\rightarrow +\infty$.   
\end{lemma}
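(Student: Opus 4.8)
The plan is to reduce the radial integral to an incomplete Gamma integral by a change of variables that isolates the entire $s$-dependence. Since the integration is carried out only in the radial variable $r$, I would treat $\theta$ (and hence $\omega(\theta)>0$) as a fixed positive constant throughout. First I would substitute $t=\sqrt{sr}\,\omega(\theta)$, so that $r=t^2/(s\,\omega(\theta)^2)$ and ${\rm d}r=2t/(s\,\omega(\theta)^2)\,{\rm d}t$; under this substitution the lower limit $r=0$ maps to $t=0$, the upper limit $r=h$ maps to $t=\sqrt{sh}\,\omega(\theta)$, and the weight $r^\zeta$ becomes $t^{2\zeta}/(s^\zeta\,\omega(\theta)^{2\zeta})$.

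Collecting the powers of $s$ and of $\omega(\theta)$, the integral then takes the form
$$
\int_{0}^h r^\zeta e^{-\sqrt{sr}\,\omega(\theta)}\,{\rm d}r
=\frac{2}{s^{\zeta+1}\,\omega(\theta)^{2\zeta+2}}\int_{0}^{\sqrt{sh}\,\omega(\theta)} t^{2\zeta+1}e^{-t}\,{\rm d}t .
$$
The remaining $t$-integral is an incomplete Gamma integral whose integrand $t^{2\zeta+1}e^{-t}$ is nonnegative and integrable on $[0,\infty)$; hence for every $s>0$ it is dominated by the complete Gamma value $\Gamma(2\zeta+2)$ (and in fact converges to it as $s\to+\infty$, since the upper limit $\sqrt{sh}\,\omega(\theta)\to+\infty$). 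This immediately yields the explicit bound
$$
\int_{0}^h r^\zeta e^{-\sqrt{sr}\,\omega(\theta)}\,{\rm d}r
\le \frac{2\,\Gamma(2\zeta+2)}{\omega(\theta)^{2\zeta+2}}\,s^{-\zeta-1},
$$
which is precisely the claimed $\Oh(s^{-\zeta-1})$ decay.

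There is no genuine obstacle here; the only point that deserves a word of care is bookkeeping of the implied constant. The constant in the $\Oh$-symbol depends on $\omega(\theta)$, through the factor $\omega(\theta)^{-(2\zeta+2)}$, and on $\zeta$ via $\Gamma(2\zeta+2)$, so the estimate is pointwise in $\theta$. Should a bound uniform in $\theta$ be required in the later applications, one would additionally need $\omega(\theta)$ to be bounded below away from $0$; in the intended setting the relevant weights (of the form $\cos(\theta/2+\pi)$ controlled by $\delta_W$) are bounded below by a positive constant, so uniformity follows automatically. Otherwise the result is an immediate consequence of the convergence of the incomplete Gamma integral to $\Gamma(2\zeta+2)$.
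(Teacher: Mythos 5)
Your proof is correct and follows essentially the same route as the paper, which simply invokes the substitution $t=\sqrt{sr}$ and leaves the computation implicit; your choice $t=\sqrt{sr}\,\omega(\theta)$ is the same substitution up to a constant rescaling, and your reduction to the incomplete Gamma integral with the bound $2\,\Gamma(2\zeta+2)\,\omega(\theta)^{-2\zeta-2}\,s^{-\zeta-1}$ is exactly the computation the paper intends. Your remark on the $\theta$-dependence of the implied constant is also consistent with how the lemma is used, since in all applications $\omega(\theta)\geq\delta_W>0$.
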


\begin{proof}
	Using variable substitution $t=\sqrt{sr}$, it is easy to derive \eqref{eq:zeta}. 
\end{proof}



Next, we recall a special type of Green formula for $H^1$ functions, which shall be needed in establishing a key integral identify for deriving the vanishing property of the conductive transmission eigenfunction. 

\begin{lemma}\label{lem:green}
	Let $\Omega\Subset \mathbb R^n $ be a bounded Lipschitz domain. For any $f,g\in H^1_\Delta:=\{f\in H^1(\Omega)|\Delta f\in L^2(\Omega)\}$, there holds the following second Green identity:
	\begin{equation}\label{eq:GIN1}
		\int_\Omega(g\Delta f-f\Delta g)\,\mathrm{d}x=\int_{\partial\Omega}(g\partial_{\nu}f-f\partial_{\nu}g)\,\mathrm{d}\sigma.
	\end{equation}
\end{lemma}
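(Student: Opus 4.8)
The plan is to reduce everything to the duality-based normal trace for vector fields in $H(\mathrm{div},\Omega)$ together with a single application of Green's first identity. The starting observation is that if $f\in H^1_\Delta$, then the vector field $F:=\nabla f$ lies in $L^2(\Omega)^n$ and satisfies $\mathrm{div}\,F=\Delta f\in L^2(\Omega)$; hence $F\in H(\mathrm{div},\Omega):=\{F\in L^2(\Omega)^n\,|\,\mathrm{div}\,F\in L^2(\Omega)\}$. On a bounded Lipschitz domain the normal trace operator $\gamma_\nu\colon H(\mathrm{div},\Omega)\to H^{-1/2}(\partial\Omega)$ is well defined and bounded, and it is precisely through this operator that the normal derivative $\partial_\nu f:=\gamma_\nu(\nabla f)\in H^{-1/2}(\partial\Omega)$ acquires meaning. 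Accordingly, throughout the argument the boundary integrals appearing in \eqref{eq:GIN1} are to be understood as the duality pairing $\langle\cdot,\cdot\rangle_{H^{-1/2}(\partial\Omega),\,H^{1/2}(\partial\Omega)}$, which is legitimate because $\mathrm{tr}\,g\in H^{1/2}(\partial\Omega)$ for any $g\in H^1(\Omega)$.

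With this set-up, I would first establish Green's first identity: for $f\in H^1_\Delta$ and $g\in H^1(\Omega)$,
$$
\int_\Omega g\,\Delta f\,\mathrm{d}x+\int_\Omega\nabla f\cdot\nabla g\,\mathrm{d}x=\langle\partial_\nu f,\,\mathrm{tr}\,g\rangle_{\partial\Omega}.
$$
The natural route is by density: on a Lipschitz domain $C^\infty(\overline{\Omega})$ is dense in $H(\mathrm{div},\Omega)$ with respect to the graph norm $\norm{F}_{L^2}+\norm{\mathrm{div}\,F}_{L^2}$, so one verifies the identity for smooth $f$ via the classical divergence theorem and then passes to the limit, using continuity of both sides together with the boundedness of $\gamma_\nu$. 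Equivalently, one may simply \emph{define} $\partial_\nu f$ through the right-hand side as a bounded functional on $H^{1/2}(\partial\Omega)$ and check that it is independent of the chosen $H^1$-extension of the boundary datum; either route yields the first identity.

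The second Green identity then follows by symmetrisation. Writing the first identity once as stated and once with the roles of $f$ and $g$ interchanged — both $f,g\in H^1_\Delta\subset H^1(\Omega)$, so both applications are admissible — and subtracting, the symmetric bilinear term $\int_\Omega\nabla f\cdot\nabla g\,\mathrm{d}x$ cancels, leaving
$$
\int_\Omega\bigl(g\,\Delta f-f\,\Delta g\bigr)\,\mathrm{d}x=\langle\partial_\nu f,\,\mathrm{tr}\,g\rangle_{\partial\Omega}-\langle\partial_\nu g,\,\mathrm{tr}\,f\rangle_{\partial\Omega},
$$
which is exactly \eqref{eq:GIN1} once the boundary integrals are read as the above duality pairings.

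The main obstacle here is not algebraic but functional-analytic: since $\partial\Omega$ is merely Lipschitz and $f$ only lies in $H^1_\Delta$, neither $\partial_\nu f$ nor $\partial_\nu g$ exists as a pointwise or even $L^2$ boundary function, so the whole identity must be executed in the $H^{-1/2}$–$H^{1/2}$ duality rather than as a genuine surface integral. The crux is therefore the correct definition and boundedness of the normal trace $\gamma_\nu$ on $H(\mathrm{div},\Omega)$, together with the density of smooth fields needed to transfer the classical divergence theorem to the full space. Once these standard ingredients are in place, the remaining algebra of swapping and subtracting is immediate.
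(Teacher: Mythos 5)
Your proof is correct, but it is worth noting that the paper itself does not prove this lemma at all: immediately after the statement, the authors simply remark that it is a special case of more general results and cite Costabel (Lemma 3.4 of the 1988 paper on boundary integral operators on Lipschitz domains) and McLean (Theorem 4.4). What you have written is essentially a self-contained reconstruction of the argument underlying those references: interpret $\nabla f$ as an element of $H(\mathrm{div},\Omega)$, give meaning to $\partial_\nu f\in H^{-1/2}(\partial\Omega)$ through the bounded normal-trace operator, prove Green's first identity by density (or, equivalently, take that identity as the definition of the normal trace and check independence of the extension), and then symmetrise in $f$ and $g$ so that the term $\int_\Omega\nabla f\cdot\nabla g\,\mathrm{d}x$ cancels. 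This is the standard and correct route, and it has the added value of making explicit something the paper leaves implicit but uses later (e.g.\ in the estimate of $\xi_j^\pm$ in Lemma 2.7, where boundary terms are paired in the $H^{-1/2}$--$H^{1/2}$ duality): the ``boundary integrals'' in \eqref{eq:GIN1} are duality pairings, not genuine surface integrals, since for $f\in H^1_\Delta$ on a Lipschitz boundary $\partial_\nu f$ need not be an $L^2$ function. One small imprecision: in the density step you speak of verifying the identity ``for smooth $f$'' and passing to the limit, but the approximation takes place in $H(\mathrm{div},\Omega)$, so the approximating smooth vector fields need not be gradients; the clean statement to prove is the divergence identity
\begin{equation*}
\int_\Omega g\,\mathrm{div}\,F\,\mathrm{d}x+\int_\Omega F\cdot\nabla g\,\mathrm{d}x=\langle\gamma_\nu F,\,\mathrm{tr}\,g\rangle_{\partial\Omega}
\end{equation*}
for all $F\in H(\mathrm{div},\Omega)$ and $g\in H^1(\Omega)$, which is linear and continuous in $F$ with respect to the graph norm, and then to specialise to $F=\nabla f$. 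With that rephrasing your argument is complete; the paper's citation buys brevity, while your version supplies the actual mechanism.
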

Lemma \ref{lem:green} is a special case of more general results in \cite[Lemma 3.4]{costabel88} and \cite[Theorem 4.4]{McLean}. It is pointed out that in Lemma~\ref{lem:green} one needs not to require that $H^2$-regularity as the usual Green formula. In particular, for the transmission eigenfunctions $(v, w)\in H^1(\Omega)\times H^1(\Omega)$ to \eqref{eq:in eig}, we obviously have $v, w\in H_\Delta^1$, and hence the Green identity \eqref{eq:GIN1} holds for $v, w$. This fact shall be frequently used in our subsequent analysis.

We proceed to derive several auxiliary lemmas that shall play a key role in establishing our first main result in Theorem~\ref{Th:1.1} in what follows. 

 \begin{lemma}
 	\label{lem:int1}
Let $S_h$ and $\Gamma_h^\pm$ be defined in \eqref{eq:sh}. Suppose that $v \in H^1(S_h )$ and $w  \in H^1(S_h) $ satisfy the following PDE system, 
	\begin{align}\label{eq:in eignew}
\left\{
\begin{array}{l}
\Delta w+k^2q w=0 \hspace*{1.6cm} \mbox{ in } S_h, \\[5pt] 
\Delta v+ k^2  v=0\hspace*{1.85cm}\ \mbox{ in } S_h \\[5pt] 
w= v,\ \partial_\nu v + \eta  v=\partial_\nu w \ \ \mbox{ on } \Gamma_h^\pm ,
  \end{array}
\right.
\end{align}
with  $\nu\in\mathbb{S}^{1}$ signifying the exterior unit normal vector to $\Gamma_h^\pm $, $k \in \mathbb{R}_+ $, $q\in L^\infty(S_h ) $ and $\eta ( x) \in C^{\alpha}(\overline{\Gamma_h^\pm } )$, where
\begin{equation}\label{eq:eta}
	\eta(x)=\eta(0)+\delta \eta(x),\quad |\delta \eta(x)| \leq \|\eta \|_{C^\alpha  } |x|^\alpha .
\end{equation}
	Recall that the CGO solution $u_0(sx)$ is defined in \eqref{eq:u0} with the parameter $s\in \mathbb R_+$.  Then the following integral equality holds
\begin{equation}\label{eq:221 int}
\begin{split}
	\int_{S_h} u_0(sx) (f_1-f_2)\rmd x&=\int_{\Lambda_h} (u_0(sx) \partial_\nu (v-w)- (v-w) \partial_\nu u_0(sx))\rmd \sigma\\
	&\quad -\int_{\Gamma_h^\pm } \eta u_0(sx) v\rmd \sigma, 
	\end{split}
\end{equation}	
where $f_1=-k^2 v$	and $f_2=-k^2 qw$. 
	
	Denote 
 \begin{equation}\label{eq:deltajs}
	 \widetilde{f}_{1j} (x) =- k^2v_j (x),
	 \end{equation}
where 	 \begin{equation}\label{eq:herg}
v_j(x)=\int_{{\mathbb S}^{1}} e^{i k \xi \cdot x} g_j(\xi ) {\rm d} \sigma(\xi ),\quad \xi\in\mathbb{S}^{1}, \quad x\in\mathbb{R}^2, \quad g_j\in L^2(\mathbb S^1)
\end{equation}
is the Herglotz wave function. 
Denote
\begin{equation}\label{eq:varphi}
	 \varphi=\angle(\xi,x). 
\end{equation}
Then  $\widetilde{f}_{1j} (x) \in C^\alpha (\overline S_h )$ and it has the expansion
\begin{equation}\label{eq:f1jf2 notation new}
		\widetilde f_{1j}(x)=- k^2v_j (x)=\widetilde f_{1j} (0)+\delta \widetilde f_{1j} (x),\quad |\delta \widetilde f_{1j} (x)  | \leq \|\widetilde f_{1j} \|_{C^\alpha } |x|^\alpha. 
\end{equation}
Assume that $f_2=-k^2 qw\in C^\alpha(\overline S_h)$ ($0<\alpha <1$) satisfying
\begin{equation}\label{eq:f1jf2 notation}
		f_2(x)=f_2(0)+\delta f_2(x),\quad |\delta f_2(x)  | \leq \|f_2\|_{C^\alpha } |x|^\alpha,
\end{equation}	
it holds that
\begin{align}\label{eq:intimportant}
	( \widetilde f_{1j} (0)-f_2(0)) \int_{S_h} u_0(sx) {\rm d} x+\delta_j(s)  &= I_3-I_2^\pm  -\int_{S_h}  \delta \widetilde f_{1j} (x) u_0(sx) {\rm d} x \nonumber \\
	&\quad +\int_{S_h}  \delta f_2(x) u_0(sx) {\rm d} x -\xi_j^\pm(s). 
\end{align}
where
\begin{equation}\label{eq:intnotation}
\begin{split}
	 I_2^\pm &=\int_{\Gamma_{h } ^\pm } \eta(x)  u_0(sx) v_j (x)  {\rm d} \sigma ,\quad I_3 =\int_{\Lambda_h} ( u_0 (sx)\partial_\nu (v-w)- (v-w)\partial_\nu u_0(sx)  ) {\rm d} \sigma,\\
	 \delta_j(s)&=-k^2 \int_{S_h} ( v(x)-v_j(x))u_0(sx)  {\rm d} x,\quad 
	   \xi_{j}^\pm(s)=  \int_{\Gamma_{h } ^\pm } \eta(x)  u_0(sx) (v(x)- v_j (x) )   {\rm d} \sigma.
	\end{split} 
	\end{equation}
Furthermore, it yields that
\begin{align}\label{eq:deltaf1j}
	\left|\int_{S_h}  \delta \widetilde f_{1j} (x) u_0(sx) {\rm d} x \right | & \leq \frac{2 \sqrt{2\pi } (\theta_M- \theta_m) \Gamma(2 \alpha+4)}{ \delta_W^{2\alpha+4   }  } k^2  {\rm diam}(S_h)^{1-\alpha }\\
	&\quad \quad \times   (1+k) \|g_j\|_{L^2( {\mathbb S}^{1})}   s^{-\alpha-2   },  \nonumber
\end{align}
and 
\begin{equation}\label{eq:deltaf2}
	\left| \int_{S_h}  \delta f_2(x) u_0(sx) {\rm d} x\right|  \leq \frac{2(\theta_M- \theta_m) \Gamma(2 \alpha+4) }{ \delta_W^{2\alpha+4   }  } \|f_2\|_{C^\alpha }  s^{-\alpha-2   },
\end{equation}
as $s \rightarrow + \infty$. 
 If $v-w\in H^2(\Sigma_{ \Lambda_h} )$, then one has as $s \rightarrow + \infty$ that
\begin{align}\label{eq:I3}
\left| I_3 \right| &\leq  C e^{-c' \sqrt s},  
\end{align} 
 where $C$ and $c'$ are two positive constants.
 \end{lemma}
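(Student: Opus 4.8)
The plan is to derive the whole lemma from the $H^1$ second Green identity of Lemma \ref{lem:green} applied to the pair $u_0(sx)$ and $v-w$ on $S_h$. Since Lemma \ref{lem:23} guarantees $u_0(sx)\in H^1(S_h)$ with $\Delta u_0(sx)=0$, and $v,w\in H^1_\Delta(S_h)$, identity \eqref{eq:GIN1} applies and gives
\[
\int_{S_h}u_0(sx)\,\Delta(v-w)\,\rmd x=\int_{\partial S_h}\big(u_0(sx)\partial_\nu(v-w)-(v-w)\partial_\nu u_0(sx)\big)\,\rmd\sigma.
\]
On the left, the first two equations of \eqref{eq:in eignew} give $\Delta(v-w)=-k^2v+k^2qw=f_1-f_2$. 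On the right, $\partial S_h$ splits into $\Gamma_h^\pm$ and $\Lambda_h$; on $\Gamma_h^\pm$ the transmission conditions $w=v$ and $\partial_\nu v+\eta v=\partial_\nu w$ force $v-w=0$ and $\partial_\nu(v-w)=-\eta v$, so the $\Gamma_h^\pm$-contribution collapses to $-\int_{\Gamma_h^\pm}\eta u_0(sx)v\,\rmd\sigma$, while the $\Lambda_h$-part is exactly the first term on the right of \eqref{eq:221 int}. This establishes \eqref{eq:221 int}.

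Next I would obtain \eqref{eq:intimportant} by pure bookkeeping. First I replace $v$ by the Herglotz approximant $v_j$ in the two integrals of \eqref{eq:221 int} that contain it: writing $f_1=-k^2v=\widetilde f_{1j}-k^2(v-v_j)$ produces the volume error $\delta_j(s)$ of \eqref{eq:intnotation}, while $\int_{\Gamma_h^\pm}\eta u_0(sx) v\,\rmd\sigma=I_2^\pm+\xi_j^\pm(s)$ splits the boundary term into $I_2^\pm$ and the trace error $\xi_j^\pm(s)$. Then I insert the splittings $\widetilde f_{1j}=\widetilde f_{1j}(0)+\delta\widetilde f_{1j}$ and $f_2=f_2(0)+\delta f_2$ of \eqref{eq:f1jf2 notation new} and \eqref{eq:f1jf2 notation}; the constant parts combine into $(\widetilde f_{1j}(0)-f_2(0))\int_{S_h}u_0(sx)\,\rmd x$, and moving the two remainder integrals to the right-hand side reproduces \eqref{eq:intimportant} verbatim, with $I_3$ as named in \eqref{eq:intnotation}.

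For the quantitative bounds I would first verify $\widetilde f_{1j}\in C^\alpha(\overline S_h)$ together with \eqref{eq:f1jf2 notation new}. Differentiating the Herglotz integral \eqref{eq:herg} under the integral sign and applying the Cauchy--Schwarz inequality on $\mathbb S^1$ (where $|\mathbb S^1|=2\pi$) gives $\|v_j\|_{L^\infty}\le\sqrt{2\pi}\,\|g_j\|_{L^2(\mathbb S^1)}$ and $\|\nabla v_j\|_{L^\infty}\le k\sqrt{2\pi}\,\|g_j\|_{L^2(\mathbb S^1)}$, so $v_j$ is Lipschitz. Interpolating this Lipschitz bound for $\delta\widetilde f_{1j}=-k^2(v_j(\cdot)-v_j(0))$ through $|x|\le|x|^\alpha\,\mathrm{diam}(S_h)^{1-\alpha}$ yields $|\delta\widetilde f_{1j}(x)|\le k^3\sqrt{2\pi}\,\mathrm{diam}(S_h)^{1-\alpha}\|g_j\|_{L^2(\mathbb S^1)}|x|^\alpha\le k^2(1+k)\sqrt{2\pi}\,\mathrm{diam}(S_h)^{1-\alpha}\|g_j\|_{L^2(\mathbb S^1)}|x|^\alpha$. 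Plugging this, and the analogous $|\delta f_2(x)|\le\|f_2\|_{C^\alpha}|x|^\alpha$ from \eqref{eq:f1jf2 notation}, into the respective volume integrals and invoking estimate \eqref{eq:xalpha} of Lemma \ref{lem:1}, which bounds $\int_W|u_0(sx)||x|^\alpha\,\rmd x$ by $\tfrac{2(\theta_M-\theta_m)\Gamma(2\alpha+4)}{\delta_W^{2\alpha+4}}s^{-\alpha-2}$, delivers \eqref{eq:deltaf1j} and \eqref{eq:deltaf2} simultaneously.

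Finally, to bound $I_3$ I would use that $\Lambda_h=W\cap\partial B_h$ sits at the fixed radius $r=h$, where $|u_0(sx)|=\exp(-\sqrt{sh}\cos(\theta/2))\le e^{-\delta_W\sqrt{sh}}$ and, by the derivative formulas in Lemma \ref{lem:23}, $|\partial_\nu u_0(sx)|\lesssim\sqrt{s}\,e^{-\delta_W\sqrt{sh}}$; integrating over the arc gives $\|u_0(sx)\|_{L^2(\Lambda_h)}\lesssim e^{-\delta_W\sqrt{sh}}$ and $\|\partial_\nu u_0(sx)\|_{L^2(\Lambda_h)}\lesssim\sqrt{s}\,e^{-\delta_W\sqrt{sh}}$. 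Invoking $v-w\in H^2(\Sigma_{\Lambda_h})$ and the trace theorem on the annular sector $\Sigma_{\Lambda_h}$ to control $\|v-w\|_{L^2(\Lambda_h)}$ and $\|\partial_\nu(v-w)\|_{L^2(\Lambda_h)}$ by $\|v-w\|_{H^2(\Sigma_{\Lambda_h})}$, a Cauchy--Schwarz estimate on $\Lambda_h$ gives $|I_3|\lesssim\sqrt{s}\,e^{-\delta_W\sqrt{sh}}$; absorbing the algebraic prefactor $\sqrt{s}$ into the exponential produces \eqref{eq:I3} for any $c'<\delta_W\sqrt{h}$. I expect the main obstacle to be the third step: tracking the precise constants in the $C^\alpha$-bound of the Herglotz approximant $\widetilde f_{1j}$ so that the interpolated Lipschitz-to-Hölder estimate matches the exact factor $k^2(1+k)\,\mathrm{diam}(S_h)^{1-\alpha}$ in \eqref{eq:deltaf1j}, since this couples the differentiation-under-the-integral step with the geometric integral \eqref{eq:xalpha}.
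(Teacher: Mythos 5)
Your proposal is correct and follows essentially the same route as the paper: the $H^1$ second Green identity of Lemma \ref{lem:green} applied to $u_0(sx)$ and $v-w$ with the transmission conditions collapsing the $\Gamma_h^\pm$ boundary terms, the same Herglotz-substitution bookkeeping producing $\delta_j(s)$, $\xi_j^\pm(s)$ and the constant-plus-remainder splittings, the bound \eqref{eq:xalpha} combined with the $C^1$ control $\|v_j\|_{C^1}\le \sqrt{2\pi}(1+k)\|g_j\|_{L^2(\mathbb S^1)}$ (your direct Lipschitz-to-H\"older interpolation $|x|\le |x|^\alpha\,\mathrm{diam}(S_h)^{1-\alpha}$ is exactly the paper's ``compact embedding'' step, and your relaxation $k^3\le k^2(1+k)$ recovers the stated constant), and finally the pointwise exponential decay of $u_0$ and $\partial_\nu u_0$ on $\Lambda_h$ together with Cauchy--Schwarz and the trace theorem on $\Sigma_{\Lambda_h}$ for \eqref{eq:I3}. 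No gaps.
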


\begin{proof}
From \eqref{eq:in eignew}, we have
\begin{equation}\label{eq:vw}
	\Delta v  =-k^2  v:= f_1,\quad  \Delta w =-k^2 q w:= f_2. 
	\end{equation}
 Subtracting the two equations of \eqref{eq:vw} together with the use of the boundary conditions of \eqref{eq:in eignew} we deduce that
\begin{equation}\label{eq:219 pde}
	\Delta(v-w )=f_1-f_2 \mbox{ in } S_h, \quad v-w=0, \,   \partial_\nu (v-w)=-\eta v \mbox{ on }   \Gamma_h^\pm . 
\end{equation}

Recall that $u_0(sx)\in H^1(S_h)$ from Lemma \ref{lem:23}. Since $v,\, w \in H^1(S_h)$ and $q\in L^{\infty}(S_h)$, it yields that $f_1,\ f_2 \in L^2(S_h)$. Since $S_h$ is obviously a bounded Lipschitz domain, by virtue of Lemma \ref{lem:green}, we have the following integral identity 
\begin{equation}\label{eq:220 int}
	\int_{S_h} u_0(sx) \Delta (v-w)\rmd x=\int_{\partial S_h} u_0(sx) \partial_\nu (v-w)- (v-w) \partial_\nu u_0(sx)\rmd \sigma,
\end{equation}
by using the fact that $\Delta u_0(sx)=0$ in $S_h$. Substituting \eqref{eq:219 pde} into \eqref{eq:220 int} it yields \eqref{eq:221 int}.


Recall that $v$ can be approximated by the Herglotz wave function $v_j$
 given by \eqref{eq:herg} in the topology induced by the $H^1$- norm.  It is clear that
\begin{align}\label{eq:222 int}
	\int_{S_h} f_1(x)u_0(sx)  {\rm d} x= \int_{S_h } \widetilde{f}_{1j} (x) u_0(sx)  {\rm d} x+ \delta_j(s),
	\end{align}
where $ \widetilde{f}_{1j} (x)$ and $\delta_j(s)$ are defined in   \eqref{eq:deltajs} and \eqref{eq:intnotation}, respectively. Furthermore, it can be derived that
\begin{align}\label{eq:int3}
	\int_{\Gamma_{h } ^\pm } \eta(x)  u_0(sx) v(x)  {\rm d} \sigma &=\int_{\Gamma_{h } ^\pm } \eta(x)  u_0(sx) v_j (x)  {\rm d} \sigma + \xi_{j}^\pm(s),   
\end{align}
where $ \xi_{j}^\pm(s)$ is defined in \eqref{eq:intnotation}. 

Combining \eqref{eq:221 int}, \eqref{eq:222 int}  with \eqref{eq:int3}, we have  the following integral identity: 
\begin{align}\label{eq:int identy}
	I_1+\delta_j (s) &= I_3 -  I_2^\pm - \xi_j^\pm(s) , 
\end{align}
where 
\begin{equation}\label{eq:I1 notation}
	I_1= \int_{S_h} u_0(sx) ( \widetilde f_{1j} (x)-f_2(x)) {\rm d} x,
\end{equation}
 $I_2^\pm$, $I_3$, $\delta_j (s)$ and $ \xi_j^\pm(s)$ are defined in \eqref{eq:intnotation}. 

It is easy to verify that $v_j\in C^\alpha(\overline{ S}_h)$. Therefore $\widetilde f_{1j}\in C^\alpha(\overline{ S}_h )$ and for $x \in S_h$ we have the splitting \eqref{eq:f1jf2 notation new}. Since $f_2 \in C^\alpha(\overline S_h )$, substituting \eqref{eq:f1jf2 notation new} and \eqref{eq:f1jf2 notation} into $I_1$ defined in \eqref{eq:I1 notation}, 
we have
\begin{align}\label{eq:I1 227}
I_1 &=( \widetilde f_{1j} (0)-f_2(0)) \int_{S_h} u_0(sx) {\rm d} x+\int_{S_h}  \delta \widetilde f_{1j} (x) u_0(sx) {\rm d} x-\int_{S_h}  \delta f_2(x) u_0(sx) {\rm d} x.
\end{align}
Substituting \eqref{eq:I1 227} into \eqref{eq:int identy}, we can further deduce the  integral equality \eqref{eq:intimportant}. 

In the following, we shall prove \eqref{eq:deltaf1j},  \eqref{eq:deltaf2} and \eqref{eq:I3}, separately. From \eqref{eq:xalpha} and \eqref{eq:f1jf2 notation new}, it can be derived that
\begin{align}\label{eq:deltaf1j int bound}
	\left|\int_{S_h}  \delta \widetilde f_{1j} (x) u_0(sx) {\rm d} x \right |&  \leq \int_{S_h}   \left| \delta \widetilde f_{1j} (x) \right |  |u_0(sx) | {\rm d} x \leq \|\widetilde f_{1j} \|_{C^\alpha } \int_{W }   |u_0(sx) | |x|^\alpha {\rm d} x  \nonumber  \\
	&\leq \frac{2(\theta_M- \theta_m) \Gamma(2 \alpha+4) }{ \delta_W^{2\alpha+4   }  } \|\widetilde f_{1j} \|_{C^\alpha }  s^{-\alpha-2   }.
\end{align}
Recall that $\widetilde f_{1j}=- k^2v_j (x)$ and $v_j$ is the  Herglotz wave function given by \eqref{eq:herg}. Using the property of compact embedding of H{\"o}lder spaces, we can derive that
$$
\| \widetilde f_{1j} \|_{C^\alpha } \leq k^2  {\rm diam}(S_h)^{1-\alpha } \|v_j\|_{C^1},
$$
where $ {\rm diam}(S_h)$ is the diameter of $S_h$. By direct computation, we have
$$
\|v_j\|_{C^1} \leq \sqrt{2\pi } (1+k) \|g_j\|_{L^2( {\mathbb S}^{1})},
$$
and therefore we can deduce \eqref{eq:deltaf1j}. Similarly, using  \eqref{eq:xalpha} and \eqref{eq:f1jf2 notation},  we can derive  \eqref{eq:deltaf2}.

It is easy to see that on $\Lambda_h$, one has
\begin{align*}
	|u_0(sx)|&=e^{\sqrt{sr} \cos (\theta/2 +\pi ) } \leq e^{-\delta_W \sqrt{s h }},\\\
	\left| \partial_{\nu}  u_0(sx) \right| &=\left| \frac{\sqrt{s} e^{i \cos (\theta/2+\pi)} }{2\sqrt{h}}  e^{\sqrt {sh }  \exp (i (\theta/2+\pi))}\right| \leq \frac{1}{2} \sqrt{\frac{s}{h}}e^{-\delta_W \sqrt{s h }},
\end{align*}
both of which decay exponentially as $s \rightarrow \infty$. Hence we know that
$$
\left\| u_0(sx) \right\|_{L^2(\Lambda_h )} \leq e^{-\delta_W \sqrt{s h }} \sqrt{ (\theta_M- \theta_m) h} , \quad \left\|\partial_{\nu}   u_0(sx) \right\|_{L^2(\Lambda_h )} \leq  \frac{1}{2} e^{-\delta_W \sqrt{s h }} \sqrt{s(\theta_M- \theta_m) }.
$$
Under the assumption that $v-w\in H^2(\Sigma_{ \Lambda_h} )$, using Cauchy-Schwarz inequality and the trace theorem, we can prove as $s \rightarrow +\infty$ that 
\begin{align}\notag 
\left| I_3 \right| &\leq \left\| u_0(sx) \right\|_{L^2(\Lambda_h )} \left\| \partial_\nu (v-w)  \right\|_{L^2(\Lambda_h )} +\left\|\partial_\nu  u_0(sx) \right\|_{L^2(\Lambda_h )} \left\|  v-w  \right\|_{L^2(\Lambda_h )} \\
&\leq  \left ( \left\| u_0(sx) \right\|_{L^2(\Lambda_h )}+\left\|\partial_\nu  u_0(sx) \right\|_{L^2(\Lambda_h )} \right) \left\|  v-w  \right\|_{H^2(\Sigma_{ \Lambda_h } )} \leq C e^{-c' \sqrt s},  \notag
\end{align} 
 where $C,c'$ are positive constants.
 
 The proof is complete. 
\end{proof}

\begin{lemma}\label{lem:27}
	Under the same setup in Lemma \ref{lem:int1}, we assume that a sequence of Herglotz wave functions $\{v_j\}_{j=1}^{+\infty} $ possesses the form \eqref{eq:herg}, which can approximate $v$  in $H^1(S_h)$  satisfying
	\begin{equation}\label{eq:ass1}
		\|v-v_j\|_{H^1(S_h)} \leq j^{-1-\Upsilon },\quad  \|g_j\|_{L^2({\mathbb S}^{1})} \leq C j^{\varrho},
	\end{equation}
	for some constants $C>0$, $\Upsilon >0$ and $0< \varrho<1 $.  Then we have the following estimates: 
\begin{equation}\label{eq:deltajnew}
	|\delta_j(s)| \leq    \frac{\sqrt { \theta_M-\theta_m } k^2 e^{-\sqrt{s \Theta } \delta_W } h } {\sqrt 2 }  j^{-1-\Upsilon},
\end{equation}
and
\begin{equation}\label{eq:xij}
	|\xi_j^\pm (s) |\leq C \left( |\eta(0)|  \frac{\sqrt { \theta_M-\theta_m }  e^{-\sqrt{s \Theta } \delta_W } h } {\sqrt 2 }  + \|\eta\|_{C^\alpha } s^{-(\alpha+1 )} \frac{\sqrt{2(\theta_M-\theta_m) \Gamma(4\alpha+4) } }{(2\delta_W)^{2\alpha+2  } } \right) j^{-1-\Upsilon}. 
\end{equation}
where $ \Theta  \in [0,h ]$ and $\delta_W$ is defined in \eqref{eq:xalpha}.
\end{lemma}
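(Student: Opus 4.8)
The plan is to establish both estimates directly from the explicit representations of $\delta_j(s)$ and $\xi_j^\pm(s)$ recorded in \eqref{eq:intnotation}, combining the Cauchy--Schwarz inequality with the weighted $L^2$-decay bounds for the CGO solution $u_0(sx)$ from Lemma \ref{lem:23} and the prescribed approximation rate \eqref{eq:ass1}. Throughout, only the first inequality in \eqref{eq:ass1}, namely $\|v-v_j\|_{H^1(S_h)}\le j^{-1-\Upsilon}$, is needed; the bound on $\|g_j\|_{L^2(\mathbb{S}^1)}$ enters only through the separate estimate \eqref{eq:deltaf1j} of Lemma \ref{lem:int1} and plays no role here.

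For \eqref{eq:deltajnew} I would argue as follows. Since $\delta_j(s)=-k^2\int_{S_h}(v-v_j)u_0(sx)\,\rmd x$ is a volume integral over $S_h$, the Cauchy--Schwarz inequality immediately gives $|\delta_j(s)|\le k^2\|v-v_j\|_{L^2(S_h)}\,\|u_0(sx)\|_{L^2(S_h)}$. One then bounds $\|v-v_j\|_{L^2(S_h)}\le\|v-v_j\|_{H^1(S_h)}\le j^{-1-\Upsilon}$ via \eqref{eq:ass1}, and substitutes the explicit bound \eqref{eq:u0L2} for $\|u_0(sx)\|_{L^2(S_h)}$ from Lemma \ref{lem:23}. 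Multiplying the two factors reproduces exactly the right-hand side of \eqref{eq:deltajnew}; this part is routine and requires no trace argument.

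The estimate \eqref{eq:xij} is the delicate one, because $\xi_j^\pm(s)=\int_{\Gamma_h^\pm}\eta(x)u_0(sx)(v-v_j)\,\rmd\sigma$ is a boundary integral. I would first insert the H\"older splitting \eqref{eq:eta}, $\eta(x)=\eta(0)+\delta\eta(x)$ with $|\delta\eta(x)|\le\|\eta\|_{C^\alpha}|x|^\alpha$, so that $\xi_j^\pm(s)$ becomes the sum of a constant-coefficient piece $\eta(0)\int_{\Gamma_h^\pm}u_0(sx)(v-v_j)\,\rmd\sigma$ and a remainder piece $\int_{\Gamma_h^\pm}\delta\eta(x)u_0(sx)(v-v_j)\,\rmd\sigma$. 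To each piece I apply the Cauchy--Schwarz inequality on $\Gamma_h^\pm$ and invoke the trace theorem to pass from the boundary factor $\|v-v_j\|_{L^2(\Gamma_h^\pm)}$ to $\|v-v_j\|_{H^1(S_h)}\le j^{-1-\Upsilon}$, absorbing the trace constant, which depends only on the fixed geometry $S_h$, into the constant $C$ that appears in front of \eqref{eq:xij}. The $u_0(sx)$ factor in the constant-coefficient piece is then controlled by \eqref{eq:u0L2}, producing the exponential factor $e^{-\sqrt{s\Theta}\delta_W}$ and the coefficient $\sqrt{\theta_M-\theta_m}\,h/\sqrt{2}$, whereas in the remainder piece the extra weight $|x|^\alpha$ is precisely matched by the weighted bound \eqref{eq:22}, yielding the faster algebraic decay $s^{-(\alpha+1)}$ together with the coefficient $\sqrt{2(\theta_M-\theta_m)\Gamma(4\alpha+4)}/(2\delta_W)^{2\alpha+2}$. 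Collecting the two pieces gives \eqref{eq:xij}.

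The main obstacle, and the only place where genuine care is required, is the passage from the one-dimensional boundary integral over $\Gamma_h^\pm$ to the two-dimensional $L^2(S_h)$ decay estimates of Lemma \ref{lem:23}: one must simultaneously control the trace of the approximation error $v-v_j$ and the boundary behaviour of the exponentially decaying CGO solution, while keeping the two distinct decay mechanisms cleanly separated, the exponential factor coming from the leading value $\eta(0)$ and the algebraic factor $s^{-(\alpha+1)}$ coming from the weighted H\"older remainder $\delta\eta$. Once the trace constant is folded into $C$, the entire $s$-dependence is governed by the volume estimates \eqref{eq:u0L2} and \eqref{eq:22}, both of which decay as $s\to+\infty$; this guarantees that $\xi_j^\pm(s)$, like $\delta_j(s)$, is doubly small, algebraically in $j$ through $j^{-1-\Upsilon}$ and in $s$ through the decay of $u_0(sx)$.
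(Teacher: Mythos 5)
Your estimate for $\delta_j(s)$ is correct and coincides with the paper's argument: interior Cauchy--Schwarz, $\|v-v_j\|_{L^2(S_h)}\le\|v-v_j\|_{H^1(S_h)}\le j^{-1-\Upsilon}$, and the bound \eqref{eq:u0L2}. The gap is in your treatment of $\xi_j^\pm(s)$. After you apply Cauchy--Schwarz in $L^2(\Gamma_h^\pm)\times L^2(\Gamma_h^\pm)$, the factors involving the CGO solution that remain are the \emph{boundary} norms $\|u_0(sx)\|_{L^2(\Gamma_h^\pm)}$ and $\left\||x|^\alpha u_0(sx)\right\|_{L^2(\Gamma_h^\pm)}$, and these are not ``controlled by'' the volume bounds \eqref{eq:u0L2} and \eqref{eq:22}: there is no inequality dominating the $L^2$-norm of a function on the boundary by its $L^2$-norm over the domain (the trace theorem maps $H^1(S_h)$ into $H^{1/2}(\Gamma_h^\pm)$; an $L^2(S_h)$ bound by itself gives no boundary control whatsoever). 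So the very step in which Lemma \ref{lem:23} is invoked is unjustified as written, and this affects both the $\eta(0)$-piece and the $\delta\eta$-piece of your decomposition.

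The paper circumvents exactly this point by pairing the two boundary factors in dual trace spaces rather than in $L^2(\Gamma_h^\pm)$: each boundary integral is bounded by $\|v-v_j\|_{H^{1/2}(\Gamma_h^\pm)}\,\|u_0(sx)\|_{H^{-1/2}(\Gamma_h^\pm)}$ (respectively with $|x|^\alpha u_0$ in the second slot), after which the trace theorem is applied to $v-v_j$ and the dual estimates $\|u_0(sx)\|_{H^{-1/2}(\Gamma_h^\pm)}\le\|u_0(sx)\|_{L^2(S_h)}$, $\left\||x|^\alpha u_0(sx)\right\|_{H^{-1/2}(\Gamma_h^\pm)}\le\left\||x|^\alpha u_0(sx)\right\|_{L^2(S_h)}$ are used; it is this passage through the weak $H^{-1/2}$ boundary norm (available here because $\Delta u_0=0$ in $S_h$) that legitimately converts the boundary factors into the volume quantities \eqref{eq:u0L2} and \eqref{eq:22}, producing exactly the constants and the rate $s^{-(\alpha+1)}$ in \eqref{eq:xij}. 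Alternatively, you could repair your own route by computing the boundary norms of $u_0$ explicitly, since on $\Gamma_h^\pm$ one has $|u_0(sx)|=e^{-\sqrt{sr}\,\omega(\theta)}$ with $\omega(\theta)\ge\delta_W$ for $\theta\in\{\theta_m,\theta_M\}$, so that Lemma \ref{lem:zeta} gives $\left\||x|^\alpha u_0(sx)\right\|_{L^2(\Gamma_h^\pm)}^2=\int_0^h r^{2\alpha}e^{-2\sqrt{sr}\,\omega(\theta)}\,{\rm d}r=\Oh\bigl(s^{-2\alpha-1}\bigr)$; but this yields the decay $s^{-\alpha-1/2}$ for the H\"older-remainder piece, strictly weaker than the claimed $s^{-(\alpha+1)}$. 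That weaker variant would still suffice for the application of the lemma in the proof of Theorem \ref{Th:1.1}, but it does not establish \eqref{eq:xij} as stated.
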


\begin{proof}
	In the following, we shall prove \eqref{eq:deltajnew} and \eqref{eq:xij}, separately. Recall that $u_0 \in H^1(S_h)$ from Lemma \ref{lem:23}.  Clearly $\widetilde{f}_{1j} (x)  \in H^2(S_h)$, which can be embedded into $C^\alpha(\overline{ S}_h) $ for $\alpha\in(0,1)$.  Moreover, by using the Cauchy-Schwarz inequality,  
we know that
\begin{equation}\label{eq:deltaj}
	|\delta_j(s)|\leq k^2 \|v-v_j\|_{L^2(S_h)}  \|u_0(sx)\|_{L^2(S_h)}.
\end{equation}
Substituting \eqref{eq:u0L2}  into \eqref{eq:deltaj} and using \eqref{eq:ass1}, one readily has \eqref{eq:deltajnew}. 

Since $\eta \in C^\alpha\left(\overline{\Gamma_h^\pm} \right)$, we have the expansion of $\eta(x)$ at the origin  as \eqref{eq:eta}.  Therefore, using Cauchy-Schwarz inequality and the trace theorem, we have
\begin{align*}
	|\xi_{j}^\pm(s)|&\leq  | \eta(0) |\int_{\Gamma_{h } ^\pm }   |u_0(sx)|  |v(x)- v_j (x) |    {\rm d} \sigma +  \| \eta \|_{C^\alpha }\int_{\Gamma_{h } ^\pm } |x|^\alpha   |u_0(sx)|  |v(x)- v_j (x) |    {\rm d} \sigma  \\
	 & \leq  | \eta(0) | \|v-v_j\|_{H^{1/2}(\Gamma_h^\pm ) } \| u_0(sx)\|_{H^{-1/2}(\Gamma_h^\pm ) } \\
	 &\quad + \| \eta \|_{C^\alpha }  \|v-v_j\|_{H^{1/2}(\Gamma_h^\pm ) } \| |x|^\alpha u_0(sx)\|_{H^{-1/2}(\Gamma_h^\pm ) }  \\
	 & \leq  | \eta(0) | \|v-v_j\|_{H^{1}(S_h ) } \| u_0(sx)\|_{L^2(S_h ) }  + \| \eta \|_{C^\alpha }  \|v-v_j\|_{H^{1}(S_h ) } \| |x|^\alpha u_0(sx)\|_{L^2(S_h ) }  . 
	 \end{align*}
Using \eqref{eq:ass1}, \eqref{eq:u0L2} and \eqref{eq:22},  we readily derive \eqref{eq:xij}.

The proof is complete.  
\end{proof}

\begin{lemma}\label{lem:u0 int}
	Recall that $\Gamma_h^\pm$ and $u_0(sx)$ are defined in \eqref{eq:sh} \eqref{eq:u0}, respectively. We have	\begin{align}\label{eq:I311}
	\begin{split}
\int_{\Gamma_{h} ^+ }  u_0 (sx)  {\rm d} \sigma &=2 s^{-1}\left( \mu(\theta_M )^{-2}-   \mu(\theta_M )^{-2} e^{ -\sqrt{sh} \mu(\theta_M ) }\right.  \\&\left. \hspace{3.5cm} -  \mu(\theta_M )^{-1} \sqrt{sh}   e^{ -\sqrt{sh} \mu(\theta_M ) }  \right  ),  \\
\int_{\Gamma_{h} ^- }  u_0 (sx)  {\rm d} \sigma &=2 s^{-1} \left( \mu(\theta_m )^{-2}-   \mu(\theta_m )^{-2} e^{ -\sqrt{sh}\mu(\theta_m )} \right.  \\&\left. \hspace{3.5cm}  -  \mu(\theta_m )^{-1} \sqrt{sh}   e^{ -\sqrt{sh}\mu(\theta_m ) }  \right  ), 
	\end{split}
\end{align}
	where $ \mu(\theta )=-\cos(\theta/2+\pi) -i \sin( \theta/2+\pi )$. 

\end{lemma}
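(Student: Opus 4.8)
The plan is to compute the two line integrals $\int_{\Gamma_h^\pm} u_0(sx)\,\mathrm{d}\sigma$ directly by parametrizing the rays $\Gamma_h^\pm$ in polar coordinates and reducing each integral to a one-dimensional integral in the radial variable $r$. On $\Gamma_h^+$ the angle is fixed at $\theta=\theta_M$ and $r$ ranges over $(0,h)$, with $\mathrm{d}\sigma=\mathrm{d}r$; likewise on $\Gamma_h^-$ we fix $\theta=\theta_m$. Recalling the definition \eqref{eq:u0}, along $\Gamma_h^+$ the exponent of $u_0(sx)$ becomes $\sqrt{sr}\bigl(\cos(\theta_M/2+\pi)+i\sin(\theta_M/2+\pi)\bigr)=-\sqrt{sr}\,\mu(\theta_M)$, where $\mu(\theta)=-\cos(\theta/2+\pi)-i\sin(\theta/2+\pi)$ as defined in the statement. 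Thus each ray integral takes the clean form
\begin{equation}\notag
\int_{\Gamma_h^+} u_0(sx)\,\mathrm{d}\sigma=\int_0^h e^{-\sqrt{sr}\,\mu(\theta_M)}\,\mathrm{d}r,
\end{equation}
and similarly for $\Gamma_h^-$ with $\theta_M$ replaced by $\theta_m$.

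Next I would evaluate the one-dimensional integral $\int_0^h e^{-a\sqrt{r}}\,\mathrm{d}r$ with $a=\sqrt{s}\,\mu(\theta)$ by the substitution $t=\sqrt{r}$, so $r=t^2$, $\mathrm{d}r=2t\,\mathrm{d}t$, giving $\int_0^{\sqrt h} 2t\,e^{-at}\,\mathrm{d}t$. This is an elementary integral that can be handled by integration by parts (or by recognizing it as the derivative in $a$ of $\int e^{-at}\,\mathrm{d}t$); one obtains
\begin{equation}\notag
\int_0^{\sqrt h} 2t\,e^{-at}\,\mathrm{d}t=\frac{2}{a^2}-\frac{2}{a^2}e^{-a\sqrt h}-\frac{2\sqrt h}{a}e^{-a\sqrt h}.
\end{equation}
Substituting back $a=\sqrt{s}\,\mu(\theta_M)$, so that $a^2=s\,\mu(\theta_M)^2$ and $a\sqrt h=\sqrt{sh}\,\mu(\theta_M)$, reproduces exactly the asserted expression \eqref{eq:I311}: the factor $2/a^2$ yields $2s^{-1}\mu(\theta_M)^{-2}$, the second term yields $2s^{-1}\mu(\theta_M)^{-2}e^{-\sqrt{sh}\,\mu(\theta_M)}$, and the third yields $2s^{-1}\mu(\theta_M)^{-1}\sqrt{sh}\,e^{-\sqrt{sh}\,\mu(\theta_M)}$. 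The computation for $\Gamma_h^-$ is verbatim the same with $\theta_M\mapsto\theta_m$.

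This lemma is essentially a routine explicit integration, so there is no deep conceptual obstacle; the only point requiring a little care is the bookkeeping of the complex-valued exponent. Since $\mu(\theta)$ is complex, I would want to confirm that $\mathrm{Re}\,\mu(\theta)=-\cos(\theta/2+\pi)\ge \delta_W>0$ for $\theta\in(\theta_m,\theta_M)$ — this is precisely the positivity already recorded in Lemma \ref{lem:1} via $\delta_W$ — so that the integrand decays and the integration-by-parts boundary terms at $t=\sqrt h$ are genuinely exponentially small while those at $t=0$ vanish. With $\mathrm{Re}\,\mu(\theta)>0$ guaranteeing convergence and the sign conventions matching the definition of $\mu$, the verification is a direct substitution with no limiting or approximation issues. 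I would therefore present the proof compactly: state the parametrization, perform the substitution $t=\sqrt r$, integrate by parts once, and resubstitute, noting that the endpoint $\theta_M$ (resp.\ $\theta_m$) is fixed throughout so $\mu(\theta_M)$ (resp.\ $\mu(\theta_m)$) is constant in the radial integration.
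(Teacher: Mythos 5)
Your proposal is correct and follows essentially the same route as the paper, whose proof is exactly the one-line recipe you carry out in detail: substitute for the square root of the radial variable (the paper uses $t=\sqrt{sr}$, you use $t=\sqrt{r}$, an immaterial difference) and integrate by parts to obtain \eqref{eq:I311}. Your added remark about $\mathrm{Re}\,\mu(\theta)>0$ is fine but not actually needed here, since the integral is over the finite interval $(0,h)$ and the identity is valid for any nonzero complex exponent coefficient.
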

\begin{proof}
	Using variable substitution $t=\sqrt{sr}$ and by direct calculations, we can derive  \eqref{eq:I311}. 
\end{proof}

Using the Jacobi-Anger expansion (cf. \cite[Page 75]{CK}), for the Herglotz wave function $v_j$ given in \eqref{eq:herg}, we have
\begin{equation}\label{eq:vjex}
	v_j(x)= v_j(0) J_0(k |x| )+2 \sum_{p=1}^\infty  \gamma_{pj}   i^p  J_p ( k |x| ), \quad x\in \R^2 , 
	\end{equation}
	where
	\begin{align}\label{eq:239 gamma}
		v_j(0)=  \int_{{\mathbb S}^{1}}  g_j(\theta ) {\rm d} \sigma(\theta ),\quad  
\gamma_{pj}= \int_{{\mathbb S}^{1}}  g_j(\xi  ) \cos (p \varphi ) {\rm d} \sigma(\xi  ), \quad p,\, j \in \mathbb N, 
			\end{align}
	 $J_p(t)$ is the $p$-th Bessel function of the first kind \cite{Abr},  $g_j$ is the kernel of $v_j$ defined in \eqref{eq:herg} and $\varphi$ is given by \eqref{eq:varphi}.

\begin{lemma}\label{lem:28}
Let  $\Gamma_h^\pm$ be defined in \eqref{eq:sh}. 	 Recall that the CGO solution $u_0(sx)$ is defined in \eqref{eq:u0} with the parameter $s\in \mathbb R_+$, and $I_2^\pm$ is defined by \eqref{eq:intnotation}. Denote 
\begin{equation}\label{eq:omegamu}
	 \omega(\theta )=-\cos(\theta/2+\pi) ,\quad  \mu(\theta )=-\cos(\theta/2+\pi) -i \sin( \theta/2+\pi ).
\end{equation}
Recall that the Herglotz wave function  $v_j$ is given in the form \eqref{eq:herg}. Suppose that $\eta ( x) \in C^{\alpha}(\overline{\Gamma_h^\pm } )$ ($0< \alpha<1$) satisfying \eqref{eq:eta} and let
\begin{align}\label{eq:Ieta}
\begin{split}
{\mathcal I}_1^-&= \int_{0}^h \delta \eta\  J_0(kr) e^{-\sqrt{sr} \mu(\theta_m)} {\rm d} r ,\quad  {\mathcal I}_2^- =2\sum_{p=1}^\infty i^p  \int_0^h  \delta \eta\ \gamma_{pj} J_{p}(kr ) e^{-\sqrt{sr} \mu (\theta_m ) } {\rm d} r,\\
	{\mathcal I}_1^+&= \int_{0}^h  \delta \eta\  J_0(kr) e^{-\sqrt{sr} \mu(\theta_M)} {\rm d} r ,\quad  {\mathcal I}_2^+ =2\sum_{p=1}^\infty i^p  \int_0^h  \delta \eta\ \gamma_{pj} J_{p}(kr ) e^{-\sqrt{sr} \mu (\theta_M ) } {\rm d} r,  \\
	I_\eta^-&= v_j(0){\mathcal I}_1^-+ {\mathcal I}_2^- ,\quad I_\eta^+= v_j(0){\mathcal I}_1^-+ {\mathcal I}_2^+. 
	\end{split}
\end{align}
 Assume that for a fixed $k\in 
 \mathbb R_+$, $kh<1$, where $h$ is the length of $\Gamma_h^\pm$,   and $-\pi< \theta_m < \theta_M <\pi $, where $\theta_m$ and $\theta_M$ are defined in \eqref{eq:W}. Then 
\begin{align}\label{eq:I2-final}
	I_2^-&=2\eta(0)v_j(0)s^{-1}\left( \mu(\theta_m )^{-2}-   \mu(\theta_m )^{-2} e^{ -\sqrt{sh}\mu(\theta_m )  } -  \mu(\theta_m )^{-1} \sqrt{sh}   e^{ -\sqrt{sh}\mu(\theta_m )  } \right  ) \nonumber\\
	&\quad +v_j(0)\eta(0) {\mathcal I}_{312}^- +\eta(0){\mathcal I}_{32}^-+I_\eta^-, 
	\end{align}
	where 
	\begin{align}\notag
	&{\mathcal I}_{312}^-=\sum_{p=1}^\infty   \frac{(-1)^p k^{2p}}{4^p(p!)^2} \int_{0}^h r^{2p}   e^{-\sqrt{sr} \mu (\theta_m)} {\rm d} r, \quad  {\mathcal I}_{32}^-=2 \sum_{p=1}^\infty  \int_{0}^h \gamma_{pj} i^p J_p(kr) e^{-\sqrt{sr} \mu (\theta_m)} {\rm d} r, \\
	&|{\mathcal I}_{312}^-|\leq \Oh(s^{-3}),\quad
	|{\mathcal I}_{32}^-|\leq \Oh (\|g_j\|_{L^2 ({\mathbb S} ^{1})} s^{-2}), \nonumber \\
	&|I_\eta^-| \leq  |v_j(0)| |{\mathcal I}_1^-| + |{\mathcal I}_2^-|  ,\quad  \left| {\mathcal I}_{1}^- \right|  \leq \Oh ( \|\eta \|_{C^\alpha }  s^{-1-\alpha }),\quad 
	\left| {\mathcal I}_{2}^- \right|  \leq \Oh (\|\eta \|_{C^\alpha } \|g_j\|_{L^2 ({\mathbb S} ^{1})} s^{-2-\alpha }). \notag
\end{align}
as $s \rightarrow +\infty$.  Similarly, we have
\begin{align}\label{eq:I2+final}
	I_2^+&=2\eta(0)v_j(0)s^{-1}\left( \mu(\theta_M )^{-2}-   \mu(\theta_M )^{-2} e^{ -\sqrt{sh}  \mu(\theta_M ) }-  \mu(\theta_M )^{-1} \sqrt{sh}  e^{ -\sqrt{sh} \mu(\theta_M ) } \right  ) \nonumber\\
	&\quad +v_j(0)\eta(0) {\mathcal I}_{312}^+ +\eta(0){\mathcal I}_{32}^+ +I_\eta^+, 
\end{align}
where
\begin{align*}
	{\mathcal I}_{312}^+&=\sum_{p=1}^\infty   \frac{(-1)^p k^{2p}}{4^p(p!)^2} \int_{0}^h r^{2p}   e^{-\sqrt{sr} \mu (\theta_M)} {\rm d} r, \quad  {\mathcal I}_{32}^+=2 \sum_{p=1}^\infty  \int_{0}^h \gamma_{pj} i^p J_p(kr) e^{-\sqrt{sr} \mu (\theta_M)} {\rm d} r ,\\ 
|{\mathcal I}_{312}^+| &\leq \Oh(s^{-3}),\quad
	|{\mathcal I}_{32}^+| \leq \Oh (\|g_j\|_{L^2 ({\mathbb S} ^{1})} s^{-2}), \nonumber \\
	|I_\eta^+| &\leq   |v_j(0)| |{\mathcal I}_1^+| + |{\mathcal I}_2^+ |  ,\ 
	\left| {\mathcal I}_{1}^+ \right|  \leq \Oh (\|\eta \|_{C^\alpha } s^{-1-\alpha }),\ 
	\left| {\mathcal I}_{2}^+  \right|  \leq \Oh (\|\eta \|_{C^\alpha }\|g_j\|_{L^2 ({\mathbb S} ^{1})} s^{-2-\alpha })
\end{align*}
as $s \rightarrow +\infty$. 
\end{lemma}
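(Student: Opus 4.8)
The plan is to reduce each boundary integral $I_2^\pm$ defined in \eqref{eq:intnotation} to a one-dimensional integral along the corresponding ray of $\Gamma_h^\pm$, and then to expand the integrand into two convergent series—the Bessel expansion of $v_j$ supplied by the Jacobi--Anger formula \eqref{eq:vjex}, and the power series of $J_0$—before sorting the resulting terms by their decay rate in $s$. I will carry out the argument for $I_2^-$; the computation for $I_2^+$ is verbatim with $\theta_m$ replaced by $\theta_M$.

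First I would parametrize $\Gamma_h^-$ by $x=(r\cos\theta_m,r\sin\theta_m)$, $r\in(0,h)$, so that $\mathrm{d}\sigma=\mathrm{d}r$ and $|x|=r$. Along this fixed ray the exponent in the CGO solution \eqref{eq:u0} reduces to $\sqrt{sr}\,(\cos(\theta_m/2+\pi)+i\sin(\theta_m/2+\pi))$, so that, recalling $\mu(\theta)$ and $\omega(\theta)$ from \eqref{eq:omegamu}, one obtains the clean identity $u_0(sx)=e^{-\sqrt{sr}\,\mu(\theta_m)}$, whose modulus is $e^{-\sqrt{sr}\,\omega(\theta_m)}$ with $\omega(\theta_m)\ge\delta_W>0$. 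This positivity of $\mathrm{Re}\,\mu(\theta_m)=\omega(\theta_m)$ is exactly what renders every scalar integral appearing below of the form covered by Lemma \ref{lem:zeta}, hence convergent and of the advertised order.

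Next I would insert the splitting $\eta(x)=\eta(0)+\delta\eta(x)$ from \eqref{eq:eta} together with the Jacobi--Anger expansion $v_j=v_j(0)J_0(kr)+2\sum_{p\ge1}\gamma_{pj}i^pJ_p(kr)$ and multiply out. The two groups carrying $\delta\eta$ assemble precisely into $v_j(0){\mathcal I}_1^-+{\mathcal I}_2^-=I_\eta^-$, while the group $\eta(0)\cdot2\sum_{p\ge1}\gamma_{pj}i^p\int_0^hJ_p(kr)e^{-\sqrt{sr}\mu(\theta_m)}\,\mathrm{d}r$ is $\eta(0){\mathcal I}_{32}^-$. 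For the remaining principal group $\eta(0)v_j(0)\int_0^hJ_0(kr)e^{-\sqrt{sr}\mu(\theta_m)}\,\mathrm{d}r$ I would substitute the power series $J_0(kr)=1+\sum_{p\ge1}\frac{(-1)^pk^{2p}}{4^p(p!)^2}r^{2p}$: the constant term reproduces $\eta(0)v_j(0)\int_{\Gamma_h^-}u_0\,\mathrm{d}\sigma$, which Lemma \ref{lem:u0 int} evaluates to the leading closed-form expression on the first line of \eqref{eq:I2-final}, whereas the tail is exactly $\eta(0)v_j(0){\mathcal I}_{312}^-$. Collecting the four pieces gives the decomposition \eqref{eq:I2-final}.

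It then remains to certify the stated orders, and here the only tool is Lemma \ref{lem:zeta}, which gives $\int_0^h r^\zeta e^{-\sqrt{sr}\omega(\theta_m)}\,\mathrm{d}r=\Oh(s^{-\zeta-1})$. Using $|J_0|\le1$ and $|\delta\eta|\le\|\eta\|_{C^\alpha}r^\alpha$ yields $|{\mathcal I}_1^-|\le\Oh(\|\eta\|_{C^\alpha}s^{-1-\alpha})$ (taking $\zeta=\alpha$), and the $p=1$ term of ${\mathcal I}_{312}^-$ (with $\zeta=2$) dominates to give $\Oh(s^{-3})$. For the two Bessel series I would use the uniform bound $|\gamma_{pj}|\le C\|g_j\|_{L^2(\mathbb{S}^1)}$ coming from \eqref{eq:239 gamma} and Cauchy--Schwarz, together with the small-argument estimate $J_p(kr)=\Oh((kr)^p)$, so that termwise Lemma \ref{lem:zeta} produces $\Oh(s^{-p-1})$ for ${\mathcal I}_{32}^-$ and $\Oh(s^{-p-1-\alpha})$ for ${\mathcal I}_2^-$; the $p=1$ terms dominate and yield the asserted $s^{-2}$ and $s^{-2-\alpha}$ rates. \emph{The main obstacle, and the step I would write out in full detail, is the uniform summability in $p$ of these two series}: the factorial decay of the Bessel coefficients together with the hypothesis $kh<1$ guarantees absolute convergence and legitimizes the termwise application of Lemma \ref{lem:zeta}, after which one checks that the $p=1$ contributions are genuinely leading order. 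Finally $|I_\eta^-|\le|v_j(0)||{\mathcal I}_1^-|+|{\mathcal I}_2^-|$ is immediate from the triangle inequality. This completes the analysis of $I_2^-$, and $I_2^+$ follows by the identical argument.
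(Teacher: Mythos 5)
Your proposal follows the paper's route step for step: the splitting $\eta=\eta(0)+\delta\eta$, the insertion of the Jacobi--Anger expansion \eqref{eq:vjex}, the grouping of the $\delta\eta$-terms into $I_\eta^-=v_j(0){\mathcal I}_1^-+{\mathcal I}_2^-$ and of the $\eta(0)$-Bessel tail into $\eta(0){\mathcal I}_{32}^-$, and the expansion of $J_0$ so that its constant term is evaluated in closed form by Lemma \ref{lem:u0 int} (giving the first line of \eqref{eq:I2-final}) while the remainder is $\eta(0)v_j(0){\mathcal I}_{312}^-$. The estimate of ${\mathcal I}_1^-$ via Lemma \ref{lem:zeta} with $\zeta=\alpha$ and the triangle inequality for $I_\eta^-$ also coincide with the paper. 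So structurally this is the same proof.

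However, the one step you yourself flag as the main obstacle --- the summability in $p$ --- is resolved in your write-up by a mechanism that fails if executed literally. You propose to apply Lemma \ref{lem:zeta} \emph{termwise} (with $\zeta=p$, $p+\alpha$ or $2p$) and then sum, arguing that the factorial decay of the Bessel coefficients together with $kh<1$ legitimizes this. But the implicit constant in Lemma \ref{lem:zeta} for exponent $\zeta$ is of size $\Gamma(2\zeta+2)\,\omega(\theta_m)^{-2\zeta-2}$ (extend the integral to $(0,\infty)$ and substitute $t=\sqrt{sr}$), so the $p$-th termwise bound behaves like $\frac{k^p(2p+1)!}{2^p p!\,\omega^{2p+2}}\,s^{-p-1}$. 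Since $(2p+1)!/(2^p p!)\gtrsim 2^p p!$, these constants overwhelm the $1/(2^p p!)$ decay: for every \emph{fixed} $s$ the series of termwise bounds diverges, so no choice of $s$ large makes the termwise argument close. The repair --- and this is exactly what the paper does --- is to exploit the finite interval \emph{before} invoking Lemma \ref{lem:zeta}: on $[0,h]$ one has $r^{p+\alpha}\le h^{p-1}r^{\alpha+1}$, $r^{p}\le h^{p-1}r$ and $r^{2p}\le h^{2p-2}r^{2}$, so every term of each series is controlled by one and the same integral ($\int_0^h r^{\alpha+1}e^{-\sqrt{sr}\omega}\,{\rm d}r$, $\int_0^h r\,e^{-\sqrt{sr}\omega}\,{\rm d}r$ or $\int_0^h r^{2}e^{-\sqrt{sr}\omega}\,{\rm d}r$) multiplied by a purely numerical series in $p$, which converges thanks to $kh<1$ and the factorial denominators. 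Lemma \ref{lem:zeta} is then applied exactly once, with fixed exponent $\alpha+1$, $1$ or $2$, and the claimed orders $\Oh(s^{-3})$, $\Oh(\|g_j\|_{L^2({\mathbb S}^1)}s^{-2})$ and $\Oh(\|\eta\|_{C^\alpha}\|g_j\|_{L^2({\mathbb S}^1)}s^{-2-\alpha})$ follow. With this replacement (and nothing else) your argument is complete and coincides with the paper's.
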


\begin{proof}
We first  investigate   the boundary integral $I_2^- $ defined in \eqref{eq:intnotation}.  Recall that $\Gamma_h^\pm$ is defined in \eqref{eq:sh}, for $x\in \Gamma_h^\pm$, the polar coordinates $x=(r\cos \theta, r \sin \theta )$ satisfy  $r \in (0, h)$ and $\theta=\theta_m$ or $\theta=\theta_M$ when $x\in \Gamma_h^-$ or $x\in \Gamma_h^+$, respectively.  Since $\eta \in C^\alpha\left(\overline{\Gamma}_h^\pm \right)$, we know that $\eta$ has the expansion \eqref{eq:eta}.  
Denote
\begin{align}\label{eq:Ieta proof}
I_{21}^-&=	 \int_{\Gamma_h^- } u_0(sx) v_j(x)  {\rm d} \sigma ,\quad 
I_{\eta}^- =\int_{\Gamma^-_h } \delta \eta(x)  u_0(sx) v_j (x) {\rm d} \sigma. 
\end{align} 
Substituting  \eqref{eq:eta} into the expression of $I_2^-$, we have\begin{align}\label{eq:I2-}
	I_2^-   &=\eta(0) I_{21}^-+ I_\eta^-. 
\end{align}


Recall that $\gamma_{pj}$ is defined by \eqref{eq:239 gamma}. Using Cauchy-Scharwz inequality, it is clear that
	 \begin{equation}\label{eq:gampj}
	 	|\gamma_{pj}| \leq \sqrt{2\pi} \|g\|_{L^2(\mathbb S^1)}. 
	 \end{equation}
For the Bessel function $J_p(t)$, we have from \cite{Abr} the following series expression: 
	 \begin{equation}\label{eq:Jp}
	 	J_p(t)= \frac{t^p}{2^p p!}+\frac{t^p}{2^p } \sum_{\ell=1}^\infty  \frac{(-1)^\ell t^{2\ell }}{4^\ell (\ell !)^2  }, \quad  \mbox{ for } p =1,\,2,\ldots, 
	 \end{equation}
which is uniformly and absolutely convergent with respect to  $t \in [0,+\infty)$.

For ${\mathcal I}_1^-$ and ${\mathcal I}_2^-$ defined in \eqref{eq:Ieta}, by substituting \eqref{eq:vjex} into $I_\eta^-$ given by \eqref{eq:Ieta proof}, it is directly verified that $I_\eta^-= v_j(0){\mathcal I}_1^-+ {\mathcal I}_2^-$. 

For $\omega(\theta )$ defined in \eqref{eq:omegamu}, it is easy to see that $\omega(\theta ) >0 $  for $-\pi< \theta_m \leq \theta \leq \theta_M <\pi $. By virtue of \eqref{eq:eta}, we have
\begin{align*}
	|{\mathcal I}_1^-| &\leq \|\eta\|_{C^\alpha }\int_{0}^h r^\alpha |J_0(kr)|  e^{-\sqrt{sr} \omega(\theta_m)} {\rm d} r:=\|\eta\|_{C^\alpha } \left({\mathcal I}_{11}^-+{\mathcal I}_{12}^-\right), 
\end{align*}
where 
$$
{\mathcal I}_{11}^-= \int_{0}^h r^\alpha  e^{-\sqrt{sr} \omega(\theta_m)} {\rm d} r,\quad {\mathcal I_{12}^-}=\sum_{p=1}^\infty   \frac{ k^{2p}}{4^p(p!)^2} \int_{0}^h r^{\alpha+2p}   e^{-\sqrt{sr} \omega(\theta_m)} {\rm d} r.
$$ 
From \eqref{eq:zeta} in Lemma \ref{lem:zeta} and noting that $\omega(\theta_m ) >0 $, we have
\begin{equation*}
	{\mathcal I}_{11}^-=\Oh(s^{-1-\alpha }) 
\end{equation*}
as $s\rightarrow +\infty$. 
For ${\mathcal I_{12}^-} $, we have the estimate
\begin{align*}
	|{\mathcal I_{12}^-}| &\leq \sum_{p=1}^\infty   \frac{h^{2p-2} k^{2p}}{4^p(p!)^2} \int_{0}^h r^{\alpha+2}   e^{-\sqrt{sr} \omega(\theta_m)} {\rm d} r=\Oh(s^{-3-\alpha })
\end{align*}
as $s\rightarrow +\infty$, where we suppose that $k h<1$ for sufficiently  small $h$. Therefore, we conclude that
\begin{equation}\label{eq:I1}
	|{\mathcal I}_1^-| \leq \Oh( \|\eta\|_{C^\alpha } s^{-1-\alpha })\ \ \mbox{as}\ \ s\rightarrow+\infty. 
\end{equation}

For sufficiently small $h>0$ fulfilling that $kh<1$,  using \eqref{eq:eta},  \eqref{eq:gampj} and \eqref{eq:Jp},  we have
\begin{align}
	|{\mathcal I}_2^-| & \leq 2 \sqrt{2\pi }\|\eta\|_{C^\alpha}  \|g_j\|_{L^2 ({\mathbb S}^{1})}  \sum_{p=1}^\infty \left[ \frac{ k^p}{2^p p!}\int_0^h r^{p+\alpha}  e^{-\sqrt{sr} \omega(\theta_m ) } {\rm d} r \right. \nonumber \\
	&\left. \hspace{3.5cm} +\frac{k ^p }{2^p } \sum_{\ell=1}^\infty  \frac{k^{2\ell }h^{2(\ell-1)  }}{4^\ell (\ell !)^2  }\left  (\int_0^h r^{p+\alpha+2 }  e^{-\sqrt{sr} \omega(\theta_m ) } {\rm d} r \right ) \right] \nonumber \\
	& \leq 2\sqrt{2\pi } \|\eta\|_{C^\alpha} \|g_j\|_{L^2 ({\mathbb S}^{1})}\sum_{p=1}^\infty \left[ \frac{ k^p  }{2^p p!}\int_0^h r^{p+\alpha}  e^{-\sqrt{sr} \omega(\theta_m ) } {\rm d} r \right. \nonumber \\
	&\left. \hspace{3.5cm}+\frac{(k h)^p}{2^p } \sum_{\ell=1}^\infty  \frac{k^{2\ell }h^{2(\ell-1)  }}{4^\ell (\ell !)^2  }\left  (\int_0^h r^{\alpha+2 }  e^{-\sqrt{sr} \omega(\theta_m ) } {\rm d} r \right ) \right]    \nonumber \\
	& \leq 2\sqrt{2\pi }\|\eta\|_{C^\alpha} \|g_j\|_{L^2 ({\mathbb S}^{1})}\sum_{p=1}^\infty \left[  \frac{ k^p h^{p-1}  }{2^p p!}\int_0^h r^{\alpha+1 } e^{-\sqrt{sr} \omega(\theta_m ) } {\rm d} r+\Oh\left  (s^{-\alpha-3} \right ) \right]    . \nonumber
\end{align}
Using Lemma \ref{lem:zeta}, we know that
$$
\int_0^h r^{\alpha+1} e^{-\sqrt{sr} \omega(\theta_m ) } {\rm d} r=\Oh(s^{-\alpha-2})\ \ \mbox{as}\ \ s\rightarrow +\infty. 
$$
Therefore we can derive that
\begin{equation}\label{eq:I2}
	|{\mathcal I}_2^-| \leq  \Oh (\|\eta\|_{C^\alpha}  \|g_j\|_{L^2 ({\mathbb S}^{1})} s^{-\alpha -2})\ \ \mbox{as}\ \ s\rightarrow +\infty. 
\end{equation}

We proceed to investigate $I_{21}^-$ given by\eqref{eq:I2-}. Denote
	 \begin{equation}
	 	{\mathcal I}_{31}^-= \int_0^h J_0(k r) e^{-\sqrt{sr}\mu(\theta_m) }\rmd r , \quad  {\mathcal I}_{32}^-=2 \sum_{p=1}^\infty  \int_{0}^h \gamma_{pj} i^p J_p(kr) e^{-\sqrt{sr} \mu (\theta_m)} {\rm d} r.
	 \end{equation}
	 Substituting the expansion \eqref{eq:vjex} into $I_{21}^-$,  it is easy to see that
\begin{align*}
	I_{21}^-
	&=v_j(0){\mathcal I}_{31}^-+ {\mathcal I}_{32}^-. 
\end{align*}

From \eqref{eq:Jp}, we know that
\begin{equation}\label{eq:J0p}
	J_0(t)=\sum_{p=0}^\infty (-1)^p \frac{t^{2p}}{4^p(p!)^2}.
\end{equation} 
Let
$$
{\mathcal I}_{311}^-= \int_{0}^h   e^{-\sqrt{sr} \mu(\theta_m)} {\rm d} r,\quad {\mathcal I_{312}}^-=\sum_{p=1}^\infty   \frac{(-1)^p k^{2p}}{4^p(p!)^2} \int_{0}^h r^{2p}   e^{-\sqrt{sr} \mu(\theta_m)} {\rm d} r.
$$
Substituting the expansion \eqref{eq:J0p} of $J_0$ into ${\mathcal I}_{31}^-$, we have
\begin{align*}
	{\mathcal I}_{31}^-={\mathcal I}_{311}^-+{\mathcal I}_{312}^-,
\end{align*}
where ${\mathcal I}_{311}^-$ can be derived directly from Lemma \ref{lem:u0 int}.

For ${\mathcal I_{312}^-}$ , we have
 \begin{align}\label{eq:241}
 	\left| {\mathcal I_{312}^-}\right| \leq \sum_{p=1}^\infty   \frac{  k^{2p} h^{2p-2}}{4^p(p!)^2} \int_{0}^h r^{2}   e^{-\sqrt{sr} \omega(\theta_m)} {\rm d} r=\Oh(s^{-3})\ \ \mbox{as}\ \ s\rightarrow +\infty. 
 \end{align}

 Substituting the expansion \eqref{eq:Jp} of $J_p$ into ${\mathcal I_{32}^-} $, using \eqref{eq:gampj}  we can deduce that
 \begin{align}\label{eq:I32}
 	|{\mathcal I}_{32}^-|&\leq 2\sqrt{2\pi } \|g_j\|_{L^2 ({\mathbb S} ^{1})}  \sum_{p=1}^\infty \left [ \frac{k^p}{2^p p!} \int_0^h r^p e^{-\sqrt{sr} \omega(\theta_m) } {\rm d} r \right. \nonumber \\
	&\left. \hspace{3.5cm}+\frac{(k h)^p }{2^p } \sum_{\ell=1}^\infty  \frac{k^{2\ell }h^{2(\ell-1)  }}{4^\ell (\ell !)^2  } \int_0^h r^{2 }  e^{-\sqrt{sr} \omega(\theta_m ) } {\rm d} r   \right]\nonumber \\ 
 	&\leq 2\sqrt{2\pi } \|g_j\|_{L^2 ({\mathbb S} ^{1})}  \sum_{p=1}^\infty \left [ \frac{k^ph^{p-1}}{2^p p!} \int_0^h r e^{-\sqrt{sr} \omega(\theta_m) } {\rm d} r+\Oh\left(s^{-3}\right) \right] \nonumber \\
 	&=\Oh (\|g_j\|_{L^2 ({\mathbb S} ^{1})} s^{-2}),
 \end{align}
 where we suppose that $k h<1$ for sufficiently small $h$.

Finally, substituting  \eqref{eq:I1}, \eqref{eq:I2}, \eqref{eq:I311}, \eqref{eq:241} and \eqref{eq:I32} into \eqref{eq:I2-}, we can obtain the integral equality \eqref{eq:I2-final}.

Following a completely similar argument in deriving the integral equality \eqref{eq:I2-final} of $I_2^-$, we can derive the integral equality \eqref{eq:I2+final} for $I_2^+$ as well.

The proof is complete.  	
\end{proof}

\begin{lemma}\label{lem:29}
Let $\mu(\theta )$ be defined in  \eqref{eq:omegamu}.  	Assume that 
	\begin{equation}\label{eq:lem 29 cond}
		-\pi < \theta_m < \theta_M <\pi ,
	\end{equation}
	then 
\begin{equation}\label{eq:mutheta}
	\mu(\theta_m)^{-2}+\mu(\theta_M)^{-2} \neq 0.
\end{equation}	
\end{lemma}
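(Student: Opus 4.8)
The plan is to first collapse the cumbersome expression for $\mu(\theta)$ into a single complex exponential. Applying the shift identities $\cos(\theta/2+\pi)=-\cos(\theta/2)$ and $\sin(\theta/2+\pi)=-\sin(\theta/2)$ to the definition \eqref{eq:omegamu}, one finds at once that $\mu(\theta)=\cos(\theta/2)+i\sin(\theta/2)=e^{i\theta/2}$, and hence $\mu(\theta)^{-2}=e^{-i\theta}$. The quantity to be analysed therefore reduces to the elementary sum $e^{-i\theta_m}+e^{-i\theta_M}$ of two unit-modulus complex numbers.

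Next I would factor this sum by the sum-to-product identity,
$$
e^{-i\theta_m}+e^{-i\theta_M}=2\cos\left(\frac{\theta_M-\theta_m}{2}\right)e^{-i(\theta_m+\theta_M)/2}.
$$
The exponential prefactor has modulus one and never vanishes, so the sum is nonzero if and only if the cosine factor $\cos\left(\frac{\theta_M-\theta_m}{2}\right)$ is nonzero. It then remains to locate the zeros of this factor under the hypothesis \eqref{eq:lem 29 cond}: since $-\pi<\theta_m<\theta_M<\pi$ forces the half-difference $(\theta_M-\theta_m)/2$ to lie in the open interval $(0,\pi)$, the cosine can vanish there only at $\pi/2$, that is, only when $\theta_M-\theta_m=\pi$. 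As this corresponds exactly to the degenerate corner of interior angle $\pi$ that is excluded throughout, the cosine factor stays strictly nonzero and \eqref{eq:mutheta} follows.

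I do not anticipate a genuine obstacle: the whole content lies in the simplification $\mu(\theta)=e^{i\theta/2}$, after which the statement is the familiar fact that two unit vectors sum to zero precisely when they are antipodal. The only point worth flagging is that the strict inequalities in \eqref{eq:lem 29 cond} must be accompanied by the non-degeneracy $\theta_M-\theta_m\neq\pi$ for \eqref{eq:mutheta} to hold; this is no accident, since $\mu(\theta_m)^{-2}+\mu(\theta_M)^{-2}$ is precisely the coefficient governing the leading $s^{-1}$ contribution in the boundary-integral expansions of Lemma~\ref{lem:28}, and its non-vanishing is exactly what lets the corner singularity drive the vanishing argument.
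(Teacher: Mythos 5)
Your proof is correct, and it starts from the same reduction as the paper's own proof: both observe that $\mu(\theta)=\cos(\theta/2)+i\sin(\theta/2)=e^{i\theta/2}$, hence $\mu(\theta)^{-2}=e^{-i\theta}$, so everything hinges on the sum $e^{-i\theta_m}+e^{-i\theta_M}$. Where you diverge is in the finishing step, and your route is genuinely sharper. The paper rewrites the sum as
\begin{equation*}
\frac{(\cos \theta_m+\cos \theta_M)+i (\sin \theta_m+\sin \theta_M )}{ (\cos \theta_m+i \sin \theta_m )(\cos \theta_M+i \sin \theta_M )}
\end{equation*}
and asserts that under \eqref{eq:lem 29 cond} it is ``straightforward to verify'' that the two numerator components cannot vanish simultaneously. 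That assertion is false as stated: for $\theta_m=-\pi/2$, $\theta_M=\pi/2$, which satisfies \eqref{eq:lem 29 cond}, one has $\cos\theta_m+\cos\theta_M=0$ and $\sin\theta_m+\sin\theta_M=0$, and indeed $\mu(\theta_m)^{-2}+\mu(\theta_M)^{-2}=i+(-i)=0$. Your sum-to-product factorization $2\cos\bigl(\tfrac{\theta_M-\theta_m}{2}\bigr)e^{-i(\theta_m+\theta_M)/2}$ locates the zero set exactly --- the sum vanishes precisely when $\theta_M-\theta_m=\pi$ --- and thereby exposes that \eqref{eq:lem 29 cond} alone is an insufficient hypothesis: the lemma needs the non-degeneracy $\theta_M-\theta_m\neq\pi$, exactly as you flag at the end. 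This omission is harmless for the rest of the paper, since every invocation of Lemma \ref{lem:29} occurs under \eqref{eq:ass3} or \eqref{eq:ass31}, which include $\theta_M-\theta_m\neq\pi$ (and the same remark applies to the 3D analogue, Lemma \ref{lem:37 coeff}, whose proof repeats the same too-hasty claim but is only applied under \eqref{3eq:ass3}). Still, the lemma statement and its proof should carry that condition explicitly, and your version is the correct formulation.
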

\begin{proof}
	It can be calculated that
\begin{align*}
	\mu(\theta_m)^{-2}+\mu(\theta_M)^{-2}=\frac{(\cos \theta_m+\cos \theta_M)+i (\sin \theta_m+\sin \theta_M )}{ (\cos \theta_m+i \sin \theta_m )(\cos \theta_M+i \sin \theta_M )}.
\end{align*}
Then under the assumption \eqref{eq:lem 29 cond}, it is straightforward to verify that
$$
\cos \theta_m+\cos \theta_M \mbox{ and } \sin \theta_m+\sin \theta_M
$$
can not be zero simultaneously,  which immediately implies \eqref{eq:mutheta}. 
\end{proof}

}

We are in a position to present one of the main theorems in this section. 

\begin{theorem}\label{Th:1.1}
	Let $v\in H^1(\Omega ) $ and $w \in H^1(\Omega )  $ be a pair of eigenfunctions to \eqref{eq:in eig} associated with $k\in\mathbb{R}_+$. Assume that the Lipschitz domain $\Omega\subset\mathbb{R}^2$ contains a corner {\bl$\Omega\cap B_h= \Omega \cap W$ with the vertex being $0\in \partial \Omega$, where $W$ is the sector defined in \eqref{eq:W}  and $h\in \mathbb R_+$}. Moreover,  there exits a sufficiently small  neighbourhood {\bl $S_h=  \Omega\cap B_h= \Omega \cap W$ of $0$}, 
such that   $q w \in C^\alpha(\overline {S _h} ) $  with $q:=1+V$ and $\eta \in C^\alpha\left(\overline{\Gamma_h^\pm } \right)$ for $0< \alpha  <1$,  and $ v-w \in H^2(\Sigma_{\Lambda_h})$, where $S_h$, $\Gamma_h^\pm$ and $\Sigma_{\Lambda_h}$ are defined in \eqref{eq:sh}.     If the following conditions are fulfilled:
	\begin{itemize}
		\item[(a)] the transmission eigenfunction $v$ can be approximated in $H^1(S_h)$ by the Herglotz functions $v_j$, $j=1,2,\ldots$,  with kernels $g_j$ satisfying the approximation property \eqref{eq:ass1};
	\item[(b)] the function $\eta(x)$ doest not vanish at the vertex $0$, {\bl where $0$ is the vertex of $S_h$,}
	 i.e., 
	\begin{equation}\label{eq:ass2}
		\eta(0) \neq 0;
	\end{equation}
	\item[(c)]  {\bl the open angle of 
	 $S_h$  satisfies}
	\begin{equation}\label{eq:ass3}
	-\pi < \theta_m < \theta_M < \pi \mbox{ and } \theta_M-\theta_m \neq \pi;
\end{equation}
	\end{itemize}
	 then one has {\bl
	\begin{equation}\label{eq:nnv1}
	\lim_{ \rho \rightarrow +0 }\frac{1}{m(B(0, \rho  ) \cap \Omega )} \int_{B(0, \rho ) \cap \Omega } |v(x)|  {\rm d} x=0,
	\end{equation}
	where $m(B(0, \rho  ) \cap \Omega )$ is the area of $B(0,\rho )\cap \Omega $.} 
\end{theorem}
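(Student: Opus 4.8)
The plan is to read off the vanishing of $v$ at the vertex directly from the integral identity \eqref{eq:intimportant} of Lemma~\ref{lem:int1} (which applies since $f_2=-k^2qw\in C^\alpha(\overline{S_h})$ and $v-w\in H^2(\Sigma_{\Lambda_h})$), by testing against the CGO solution $u_0(s\,\cdot)$ and letting $s\to+\infty$ along a sequence coupled to the Herglotz index $j$. The guiding observation is that among all the terms in \eqref{eq:intimportant}, the boundary contribution $I_2^+ + I_2^-$ is the only one of order $s^{-1}$: by Lemma~\ref{lem:28} (combining \eqref{eq:I2-final} and \eqref{eq:I2+final}),
\[
I_2^+ + I_2^- = 2\eta(0)v_j(0)\bigl(\mu(\theta_m)^{-2}+\mu(\theta_M)^{-2}\bigr)s^{-1} + (\text{lower order}),
\]
whereas $\int_{S_h}u_0(sx)\,\mathrm{d}x = O(s^{-2})$ by \eqref{eq:u0w} together with \eqref{eq:1.5}, $I_3$ is exponentially small by \eqref{eq:I3}, and the volume remainders are $O(s^{-\alpha-2})$ by \eqref{eq:deltaf1j} and \eqref{eq:deltaf2}. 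Thus, after multiplying \eqref{eq:intimportant} by $s$ and passing to the limit, the only term that can survive is the coefficient $2\eta(0)v_j(0)(\mu(\theta_m)^{-2}+\mu(\theta_M)^{-2})$, which should force $v_j(0)\to0$.

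To make this precise I would first fix the scale coupling $s=s_j:=j^{\beta}$ with $\varrho<\beta<1+\Upsilon$; such $\beta$ exists because the hypotheses $0<\varrho<1$ and $\Upsilon>0$ in \eqref{eq:ass1} give $\varrho<1<1+\Upsilon$. Multiplying \eqref{eq:intimportant} by $s$ and invoking Lemma~\ref{lem:27}, the terms $s\,\delta_j(s)$ and $s\,\xi_j^\pm(s)$ are $O(j^{\beta-1-\Upsilon})$ and vanish since $\beta<1+\Upsilon$ (by \eqref{eq:deltajnew} and \eqref{eq:xij}); the remainders carrying the Herglotz norm $\|g_j\|_{L^2(\mathbb{S}^1)}\lesssim j^{\varrho}$, namely $s\int_{S_h}\delta\widetilde f_{1j}\,u_0$ from \eqref{eq:deltaf1j} and the $\gamma_{pj}$-parts $\mathcal I_{32}^\pm$, $\mathcal I_2^\pm$ of $I_2^\pm$ in Lemma~\ref{lem:28}, vanish because $\beta>\varrho$; and $s\,I_3$, $s\int_{S_h}\delta f_2\,u_0$ and the $f_2(0)$-part of $s(\widetilde f_{1j}(0)-f_2(0))\int_{S_h}u_0$ tend to $0$ as $s\to+\infty$ independently of $j$.

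The key bookkeeping is to separate the pieces that still carry the factor $v_j(0)$. Writing $\widetilde f_{1j}(0)=-k^2v_j(0)$ and collecting all $v_j(0)$-proportional contributions (the main part of $I_2^\pm$, its exponentially small and $\mathcal I_{312}^\pm$, $\mathcal I_1^\pm$ corrections, and the $\widetilde f_{1j}(0)$ term on the left), the scaled identity takes the form $v_j(0)\bigl[A+\varepsilon(s)\bigr]=R(s,j)$, where $A=2\eta(0)(\mu(\theta_m)^{-2}+\mu(\theta_M)^{-2})$, with $\varepsilon(s)\to0$ as $s\to+\infty$ (uniformly in $j$) and $R(s_j,j)\to0$ by the estimates above. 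By condition (b), $\eta(0)\neq0$, and by condition (c) the opening angle satisfies $\theta_M-\theta_m\neq\pi$, so Lemma~\ref{lem:29} via \eqref{eq:mutheta} gives $\mu(\theta_m)^{-2}+\mu(\theta_M)^{-2}\neq0$; hence $A\neq0$ and $v_j(0)=R/(A+\varepsilon)\to0$ as $j\to+\infty$.

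Finally I would transfer $v_j(0)\to0$ to the averaged vanishing \eqref{eq:nnv1}. For $\rho\le h$ one has $B(0,\rho)\cap\Omega=S_\rho$ with $m(S_\rho)=\tfrac12(\theta_M-\theta_m)\rho^2$, so
\[
\frac{1}{m(S_\rho)}\int_{S_\rho}|v|\,\mathrm{d}x \le \frac{\|v-v_j\|_{L^2(S_h)}}{\sqrt{m(S_\rho)}} + \frac{1}{m(S_\rho)}\int_{S_\rho}|v_j|\,\mathrm{d}x .
\]
The first term is $\lesssim \rho^{-1}j^{-1-\Upsilon}$ by \eqref{eq:ass1} and Cauchy--Schwarz; for the second, the Jacobi--Anger expansion \eqref{eq:vjex} yields $|v_j(x)|\le|v_j(0)|+C\|g_j\|_{L^2(\mathbb{S}^1)}k|x|\le|v_j(0)|+Cj^{\varrho}|x|$ for $k|x|<1$, so its average is $\lesssim|v_j(0)|+j^{\varrho}\rho$. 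Choosing $j=j(\rho)\to\infty$ as $\rho\to+0$ with $\rho^{-1/(1+\Upsilon)}\ll j(\rho)\ll\rho^{-1/\varrho}$ --- again possible since $\varrho<1+\Upsilon$ --- drives all three contributions to zero and gives \eqref{eq:nnv1}. The hard part throughout is precisely this double scaling: one must keep two nonempty windows open at once ($\varrho<\beta<1+\Upsilon$ for the $s$-limit and $\rho^{-1/(1+\Upsilon)}\ll j\ll\rho^{-1/\varrho}$ for the $\rho$-limit) and correctly decide which remainders merely perturb the nonzero constant $A$ (those bearing $v_j(0)$, absorbed into $\varepsilon(s)$) and which must be beaten by the Herglotz approximation rate (those bearing $\|g_j\|_{L^2(\mathbb{S}^1)}$).
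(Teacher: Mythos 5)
Your proposal is correct and follows essentially the same route as the paper's own proof: the same integral identity \eqref{eq:intimportant} tested against the CGO solution $u_0(s\,\cdot)$, the same expansions of $I_2^\pm$ from Lemma~\ref{lem:28}, the nonvanishing of $\mu(\theta_m)^{-2}+\mu(\theta_M)^{-2}$ from Lemma~\ref{lem:29} under \eqref{eq:ass2}--\eqref{eq:ass3}, and the conclusion $v_j(0)\to 0$ subsequently transferred to the averaged vanishing \eqref{eq:nnv1}. The only differences are refinements rather than a new idea: the paper simply takes $s=j$ (which lies in your window $\varrho<\beta<1+\Upsilon$ since $\varrho<1$), and your explicit grouping of the $v_j(0)$-bearing remainders into the perturbed coefficient $A+\varepsilon(s)$, together with the diagonal choice $j=j(\rho)$ in the final averaging step, makes rigorous what the paper carries out more loosely in \eqref{eq:45}--\eqref{eq:47} and \eqref{eq:250}.
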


\begin{remark}\label{rem:th1.1}
 In Theorem~\ref{Th:1.1}, we consider the case that $v, w$ are a pair of conductive transmission eigenfunctions to \eqref{eq:in eig} and show the vanishing property near a corner. We would like to emphasize that the result can be localized in the sense that as long as $v, w$ satisfy all the conditions stated in Theorem~\ref{Th:1.1} in $\Omega\cap S_h$, then one has the vanishing property \eqref{eq:nnv1} near the corner. That is, $v, w$ are not necessary conductive transmission eigenfunctions, and it suffices to require that $v, w$ satisfy the equations in \eqref{eq:in eig} in $S_h\cap\Omega$ and the conductive transmission conditions on $\overline{S}_h \cap\partial\Omega$, then one has the same vanishing property as stated in Theorem~\ref{Th:1.1}. Indeed, the subsequent proof of Theorem~\ref{Th:1.1} is for the aforementioned localized problem. 
\end{remark}

\begin{remark}\label{rem:hn1}
The condition \eqref{eq:ass1} signifies a certain regularity condition of the transmission eigenfunction $v\in H^1(\Omega)$. In \cite{BL2017b}, the following regularity condition was introduced,
\begin{equation}\label{eq:vjgj}
	\|v-v_j\|_{L^2( \Omega )} \leq e^{-j},\quad \|g_j\|_{L^2( {\mathbb S}^{n-1} )} \leq C (\ln j)^\beta, 
\end{equation}
where the constants $C>0$ and $0<\beta < 1/(2n+8), (n=2, 3)$. Here, we allow the polynomial growth of the kernel functions. Moreover, we would like to remark that $qw\in C^\alpha(\overline S_h )$ is technically required in our mathematical argument of proving Theorem~\ref{Th:1.1}. {\bl This technical condition can be fulfilled in our study for the unique recovery result  of the inverse scattering problem. Indeed, when $q$ is a constant, it is shown in Lemma \ref{lem41} that $qw\in C^\alpha(\overline S_h)$.} The interior regularity requirement $v-w \in H^2(\Sigma_{\Lambda_h})$ can be fulfilled in certain practical scenarios; see Theorem~\ref{Th:4.1} in what follows on the study of an inverse scattering problem. The introduction of this interior regularity condition shall play a critical role in the proof of Theorem~\ref{Th:4.1}. 
\end{remark}


%
%

\begin{proof}[Proof of Theorem~\ref{Th:1.1}]

{\bl It is clear that the transmission eigenfunctions $v \in H^1(\Omega )$ and $w \in H^1(\Omega )$ to \eqref{eq:in eig} fulfill \eqref{eq:in eignew}.   From Lemma \ref{lem:int1}, we know that \eqref{eq:intimportant} holds.} Using the fact that
 \begin{equation}\label{eq:int u0 sh}
\int_{S_h} u_0(sx) {\rm d} x=\int_{W} u_0(sx){\rm d} x- \int_{ W\backslash S_h  } u_0(sx)  {\rm d} x,
 \end{equation}
{\bl where $u_0(sx)$ is defined in \eqref{eq:u0}, substituting \eqref{eq:int u0 sh} into \eqref{eq:intimportant},}  we obtain the following integral equation 
\begin{align*}
	& ( \widetilde f_{1j} (0)-f_2(0)) \int_{ W  } u_0(sx) {\rm d} x + \delta_j(s)  = I_3-I_2^\pm -\int_{S_h} \delta\widetilde f_{1j}(x)  u_0(sx) {\rm d} x\\
	& \quad \quad \quad +\int_{S_h}  \delta f_2(x) u_0(sx) {\rm d} x +( \widetilde f_{1j} (0)-f_2(0)) \int_{ W \backslash S_h  } u_0(sx) {\rm d} x- \xi_j^\pm (s),
\end{align*}
{\bl where $\delta  \widetilde f_{1j} (x)$ and $\delta f_2(x) $ are defined in \eqref{eq:f1jf2 notation new} and \eqref{eq:f1jf2 notation}, respectively, $\delta_j(s)$ and $\xi_j^\pm (s)$ is given by \eqref{eq:intnotation}.}

From \eqref{eq:u0w}, we know that
\begin{equation}\label{eq:f1jf2}
	( \widetilde f_{1j} (0)-f_2(0)) \int_{ W  } u_0(sx) {\rm d} x =6 i (\widetilde f_{1j} (0)-f_2(0) ) (e^{-2\theta_M i }-e^{-2\theta_m i }  )  s^{-2}. 
\end{equation}

{\bl From Lemma \ref{lem:28} it yields  \eqref{eq:I2-final} and \eqref{eq:I2+final}.} Substituting \eqref{eq:I2-final} and \eqref{eq:I2+final} into \eqref{eq:intimportant}, multiplying  $s$ on the both sides of \eqref{eq:intimportant}, and rearranging terms, we deduce that
\begin{align}\label{eq:45}
	& 2v_j(0)\eta(0)\Bigg[ \left( \mu(\theta_M )^{-2}-   \mu(\theta_M )^{-2} e^{ -\sqrt{sh}  \mu(\theta_M ) } -  \mu(\theta_M )^{-1} \sqrt{sh}   e^{ -\sqrt{sh}  \mu(\theta_M ) }  \right  ) \nonumber\\
	&\qquad\qquad+ \left( \mu(\theta_m )^{-2}-   \mu(\theta_m )^{-2} e^{ -\sqrt{sh}  \mu(\theta_m ) } -  \mu(\theta_m )^{-1} \sqrt{sh}   e^{ -\sqrt{sh} \mu(\theta_m )} \right  ) \Bigg] \nonumber \\
	=& s\Bigg[ I_3-( \widetilde f_{1j} (0)-f_2(0)) \int_{S_h} u_0(sx) {\rm d} x- \delta_j(s)- v_j(0) \eta(0)\left({\mathcal I}_{312}^-+{\mathcal I}_{312}^+ \right) \nonumber \\
	&-\eta(0)({\mathcal I}_{32}^++ {\mathcal I}_{32}^- ) -I_\eta^+ - I_\eta^- -\int_{S_h}  \delta \widetilde f_{1j} (x) u_0(sx) {\rm d} x +\int_{S_h}  \delta f_2(x) u_0(sx) {\rm d} x -\xi_j^\pm(s) \Bigg]. 
\end{align}
{\bl Taking} $s=j$, under the assumption \eqref{eq:ass1}, using \eqref{eq:I2-final} and \eqref{eq:I2+final} in Lemma \ref{lem:28},   we know that
\begin{align}\label{eq:46}
&j | {\mathcal I}_{32}^-|  \leq \Oh( j^{-1} \|g_j\|_{L^2({\mathbb S} ^{1} )} )\leq \Oh(j^{-1+\varrho }),\quad j | {\mathcal I}_{32}^+ |  \leq \Oh( j^{-1} \|g_j\|_{L^2({\mathbb S} ^{1} )} ) \leq \Oh(j^{-1+\varrho }), \nonumber \\
&j |I_\eta^-| \leq  \|\eta \|_{C^\alpha } \left( |v_j(0)| \Oh (j^{-\alpha })+ \Oh (\|g_j\|_{L^2({\mathbb S} ^{1})} j^{-1-\alpha } ) \right),\nonumber \\
&\quad \leq  \|\eta \|_{C^\alpha } \left( |v_j(0)| \Oh (j^{-\alpha })+ \Oh ( j^{-1-\alpha +\varrho } ) \right),\nonumber\\
& j|I_\eta^+|  \leq  \|\eta \|_{C^\alpha } \left( |v_j(0)| \Oh (j^{-\alpha })+ \Oh (\|g_j\|_{L^2({\mathbb S} ^{1})} j^{-1-\alpha } ) \right),\nonumber \\
&\quad \leq  \|\eta \|_{C^\alpha } \left( |v_j(0)| \Oh (j^{-\alpha })+ \Oh ( j^{-1-\alpha +\varrho } ) \right),\nonumber\\
& j {\mathcal I}_{312}^- \leq \Oh(j^{-2}),\quad j {\mathcal I}_{312}^+ \leq \Oh(j^{-2}),
\end{align}
Clearly, when $s=j$, {\bl under the assumption \eqref{eq:ass1}, by virtue of \eqref{eq:deltaf1j},  \eqref{eq:deltaf2} and \eqref{eq:I3} in Lemma \ref{lem:int1}, we can obtain
\begin{equation}
	\begin{split}
		&
	j \left|\int_{S_h}  \delta \widetilde f_{1j} (x) u_0(j x) {\rm d} x \right | \leq \frac{2\sqrt{2\pi}(\theta_M- \theta_m) \Gamma(2 \alpha+4) }{ \delta_W^{2\alpha+4   }  } k^2 {\rm diam}({S_h})^{1-\alpha }   \\
	&\hspace{4.5cm} \times  (1+k) \|g_j\|_{L^2( {\mathbb S}^{1})}   j^{-\alpha-1   } \leq \Oh(j^{-1-\alpha +\varrho  }), \\
	& j \left| \int_{S_h}  \delta f_2(x) u_0(j x) {\rm d} x\right|  \leq \frac{2(\theta_M- \theta_m) \Gamma(2 \alpha+4) )}{ \delta_W^{2\alpha+4   }  } \|f_2\|_{C^\alpha }  j^{-\alpha-1   }, \  j |I_3| \leq  C j e^{-c' \sqrt j}, 
	\end{split}
\end{equation}
where $C,c'>0$ and $\delta_W$ is defined in \eqref{eq:xalpha}. Similarly,  under the assumption \eqref{eq:ass1}, when $s=j$, in view of \eqref{eq:deltajnew} and \eqref{eq:xij} in Lemma \ref{lem:27},    it can be derived that
\begin{align}
	&j |\xi_j^\pm (j) |\leq C \left( |\eta(0)|  \frac{\sqrt { \theta_M-\theta_m }  e^{-\sqrt{j \Theta } \delta_W } h } {\sqrt 2 } j  + \|\eta\|_{C^\alpha } j^{-\alpha } \frac{\sqrt{2(\theta_M-\theta_m) \Gamma(4\alpha+4) } }{(2\delta_W)^{2\alpha+2  } } \right) j^{-1-\Upsilon } , \nonumber \\
	&j|\delta_j(j)| \leq    \frac{\sqrt { \theta_M-\theta_m } k^2 e^{-\sqrt{j \Theta } \delta_W } h } {\sqrt 2 } j^{-\Upsilon }  , \quad  \Theta  \in [0,h ].
\end{align}
Furthermore, taking $s=j$ and using \eqref{eq:int u0 sh}, from \eqref{eq:u0w} and \eqref{eq:1.5}, we can deduce that
\begin{equation}\label{eq:47}
		j \left| \int_{S_h} u_0(jx) {\rm d} x \right| \leq 6 |e^{-2\theta_M i }-e^{-2\theta_m i }  | j^{-1} +  \frac{6(\theta_M-\theta_m )}{\delta_W^4} j^{-1} e^{-\delta_W \sqrt{h j}/2}. 
\end{equation}
}

The coefficient of $v_j(0)$ of \eqref{eq:45} with respect to the zeroth order of $s$ is 
$$
2 \eta(0)\left  (\mu(\theta_m)^{-2}+\mu(\theta_M)^{-2} \right ). 
$$
Under  the assumption \eqref{eq:ass3}, from Lemma \ref{lem:29}, we have
\begin{equation}\label{eq:umneq0}
	\mu(\theta_m)^{-2}+\mu(\theta_M)^{-2}\neq 0
\end{equation}


 We take $s=j$ in \eqref{eq:45}.  By letting  $j\rightarrow \infty$ in \eqref{eq:45}, from \eqref{eq:46} and \eqref{eq:47}, we can prove that
 $$
\eta(0)\left(\mu(\theta_m)^{-2}+\mu(\theta_M)^{-2} \right) \lim_{j \rightarrow \infty} v_j(0)=0. 
$$
Since $\eta(0)\neq 0$  and using \eqref{eq:umneq0}, it is easy to see that 
$$
\lim_{j \rightarrow \infty} v_j(0)=0. 
$$
Using the fact that
\begin{align}
	\lim_{ \rho \rightarrow +0 } & \frac{1}{m(B(0, \rho  )\cap \Omega )} \int_{B(0, \rho )\cap \Omega } |v(x)| {\rm d} x \leq \lim_{j \rightarrow \infty}  \Big( \lim_{ \rho \rightarrow +0 }\frac{1}{m(B(0, \rho  )\cap \Omega )} \nonumber \\
	&\times \int_{B(0, \rho ) \cap \Omega } |v(x)-v_j(x)| {\rm d} x +\lim_{ \rho \rightarrow +0 }\frac{1}{m(B(0, \rho  )\cap \Omega )} \int_{B(0, \rho )\cap \Omega} |v_j(x)| {\rm d} x\Big),  \label{eq:250}
\end{align}
we readily finish the proof of this theorem. 
\end{proof}

%

We next consider the degenerate case of Theorem~\ref{Th:1.1} with $\eta\equiv 0$. The conductive transmission eigenvalue problem \eqref{eq:in eig} is reduced to the following interior transmission eigenvalue problem
\begin{align}\label{eq:in eig reduce}
\left\{
\begin{array}{l}
\Delta w+k^2(1+V) w=0 \quad\ \mbox{ in } \Omega, \\[5pt] 
\Delta v+ k^2  v=0\hspace*{1.85cm}\ \mbox{ in } \Omega, \\[5pt] 
w= v,\quad \partial_\nu v=\partial_\nu w \hspace*{.9cm} \mbox{ on } \partial \Omega,
  \end{array}
\right.
\end{align}
By slightly modifying our proof of Theorem~\ref{Th:1.1}, we can show the following result. 
%
\begin{corollary}\label{cor:2.1}
{\bl Let $\Omega\Subset \mathbb R^2$ be a bounded Lipschitz domain containing a corner  $\Omega\cap B_h= \Omega \cap W$  with the vertex being $0\in \partial \Omega$, where $W$ is the sector defined in \eqref{eq:W}  and $h\in \mathbb R_+$}.  Suppose  $v \in H^1(\Omega )$ and $w\in H^1(\Omega ) $ are a pair of interior transmission eigenfunctions to \eqref{eq:in eig reduce}. Let $W$ and $S_h$ be the same as described in Theorem~\ref{Th:1.1}. Assume that { $ v-w \in H^2(\Sigma_{\Lambda_h} ) $ }  and  $q w \in C^\alpha(\overline {S}_h ) $  for $0< \alpha  <1$. Under the conditions \eqref{eq:ass3} and that the transmission eigenfunction $v$ can be approximated in $H^1(S_h)$ by the Herglotz functions $v_j$, $j=1,2,\ldots$,  with kernels $g_j$ satisfying
	\begin{equation}\label{eq:ass1 int}
		\|v-v_j\|_{H^1(S_h)} \leq j^{-2-\Upsilon},\quad  \|g_j\|_{L^2({\mathbb S}^{1})} \leq C j^{\varrho}, 
	\end{equation}
	for some constants $C>0$, $\Upsilon >0$ and $0< \varrho<\alpha $, one has
	{\bl 
	\[
	\lim_{ \rho \rightarrow +0 }\frac{1}{m(B(0, \rho  )\cap \Omega )} \int_{B(0, \rho ) \cap \Omega }V(x) w(x)  {\rm d} x=0. 
	\]
	}
\end{corollary}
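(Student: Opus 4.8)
The plan is to specialize the integral identity of Lemma \ref{lem:int1} to $\eta\equiv 0$ and rerun the scheme behind Theorem \ref{Th:1.1}, but at one higher order in $s$. When $\eta\equiv 0$ the boundary terms $I_2^\pm$ and $\xi_j^\pm(s)$ disappear from \eqref{eq:intimportant}, leaving
$$(\widetilde f_{1j}(0)-f_2(0))\int_{S_h}u_0(sx)\,\rmd x + \delta_j(s) = I_3 - \int_{S_h}\delta\widetilde f_{1j}(x)u_0(sx)\,\rmd x + \int_{S_h}\delta f_2(x)u_0(sx)\,\rmd x,$$
where now $\widetilde f_{1j}(0)-f_2(0)=-k^2\bigl(v_j(0)-(qw)(0)\bigr)$. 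In Theorem \ref{Th:1.1} the surviving leading contribution came from the $\eta(0)$-boundary integral and was of order $s^{-1}$; here no such term is present, and by \eqref{eq:u0w} the leading interior contribution $(\widetilde f_{1j}(0)-f_2(0))\int_W u_0 = 6i(\widetilde f_{1j}(0)-f_2(0))(e^{-2\theta_M i}-e^{-2\theta_m i})s^{-2}$ is of order $s^{-2}$. Accordingly I would multiply the identity by $s^{2}$ (rather than by $s$) and split $\int_{S_h}u_0=\int_W u_0-\int_{W\setminus S_h}u_0$ via \eqref{eq:int u0 sh}.

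Second, I would show that every remainder vanishes along $s=j\to\infty$ under the strengthened hypothesis \eqref{eq:ass1 int}. Using Lemma \ref{lem:27}, $s^{2}|\delta_j|=\Oh(j^{2}\cdot j^{-2-\Upsilon})=\Oh(j^{-\Upsilon})$, which is exactly why the rate $j^{-2-\Upsilon}$ (one power better than in Theorem \ref{Th:1.1}) is needed. From \eqref{eq:deltaf1j}, $s^{2}\bigl|\int_{S_h}\delta\widetilde f_{1j}u_0\bigr|=\Oh(j^{2}\cdot j^{\varrho}j^{-\alpha-2})=\Oh(j^{\varrho-\alpha})$, which forces the restriction $0<\varrho<\alpha$; from \eqref{eq:deltaf2}, $s^{2}\bigl|\int_{S_h}\delta f_2 u_0\bigr|=\Oh(j^{-\alpha})$; from \eqref{eq:I3}, $s^{2}|I_3|=\Oh(j^{2}e^{-c'\sqrt j})$; and the tail $s^{2}|\widetilde f_{1j}(0)-f_2(0)|\int_{W\setminus S_h}|u_0|$ is $\Oh(j^{\varrho}e^{-\delta_W\sqrt{hj}/2})$ by \eqref{eq:1.5} together with $|v_j(0)|\leq\sqrt{2\pi}\|g_j\|_{L^2(\mathbb S^1)}\leq Cj^{\varrho}$. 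All tend to $0$, so in the limit $6i(e^{-2\theta_M i}-e^{-2\theta_m i})\lim_{j}(\widetilde f_{1j}(0)-f_2(0))=0$. Under \eqref{eq:ass3} one has $\theta_M-\theta_m\not\equiv 0\ (\mathrm{mod}\ \pi)$, hence $e^{-2\theta_M i}\neq e^{-2\theta_m i}$, and therefore $\lim_{j\to\infty}v_j(0)=(qw)(0)$.

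Third, I would convert this into the stated average. Abbreviating $\mathrm{avg}_\rho(f):=\frac{1}{m(B(0,\rho)\cap\Omega)}\int_{B(0,\rho)\cap\Omega}f\,\rmd x$ and writing $Vw=qw-v+(v-w)$, we get $\mathrm{avg}_\rho(Vw)=\mathrm{avg}_\rho(qw)-\mathrm{avg}_\rho(v)+\mathrm{avg}_\rho(v-w)$. The first average tends to $(qw)(0)$ since $qw\in C^\alpha(\overline S_h)$ is continuous at $0$. The third average tends to $0$ by a Poincar\'e estimate: from \eqref{eq:219 pde} with $\eta\equiv0$ one has $v-w=0$ on $\Gamma_h^\pm$, so separating variables in $(r,\theta)$ and using the one-dimensional Poincar\'e inequality in $\theta$ on $[\theta_m,\theta_M]$ yields $\|v-w\|_{L^2(B_\rho\cap W)}\leq C\rho\|\nabla(v-w)\|_{L^2(B_\rho\cap W)}$; since $m(B_\rho\cap W)\sim\rho^{2}$, Cauchy--Schwarz gives $|\mathrm{avg}_\rho(v-w)|\leq C\|\nabla(v-w)\|_{L^2(B_\rho\cap W)}\to0$. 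Finally the second average tends to $\lim_j v_j(0)=(qw)(0)$ by the same approximation argument as in \eqref{eq:250}, applied to $v$ and the constant $(qw)(0)$, using $\mathrm{avg}_\rho(|v-(qw)(0)|)\leq\mathrm{avg}_\rho(|v-v_j|)+\mathrm{avg}_\rho(|v_j-(qw)(0)|)$ together with the continuity of each $v_j$ and $v_j(0)\to(qw)(0)$. Combining the three limits cancels $(qw)(0)$ and yields the claimed vanishing average.

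The main obstacle is this last averaging step: passing from the pointwise statement $v_j(0)\to(qw)(0)$ about the Herglotz approximants to the average of the merely $H^1$ (hence, in two dimensions, not continuous) eigenfunction $v$ at the corner vertex. This is precisely the delicate interchange of the $\rho\to0$ and $j\to\infty$ limits already present in \eqref{eq:250}, and the purpose of strengthening the approximation rate to $j^{-2-\Upsilon}$ and the kernel growth to $0<\varrho<\alpha$ is exactly to keep the $s^{2}$-weighted remainders controlled while leaving this interchange valid; by comparison the Poincar\'e reduction for $v-w$ and the order bookkeeping are routine.
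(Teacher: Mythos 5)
Your proposal is correct and follows essentially the same route as the paper's own proof: specialize the identity of Lemma \ref{lem:int1} to $\eta\equiv 0$, split $\int_{S_h}u_0$ via \eqref{eq:int u0 sh}, multiply by $s^2=j^2$, use \eqref{eq:ass1 int} with $0<\varrho<\alpha$ together with \eqref{eq:deltaf1j}, \eqref{eq:deltaf2}, \eqref{eq:I3} and \eqref{eq:1.5} to kill every remainder, and conclude $\lim_{j}v_j(0)=(qw)(0)$ from $e^{-2\theta_M i}\neq e^{-2\theta_m i}$ under \eqref{eq:ass3}, then pass to corner averages exactly as in \eqref{eq:250}. Your only departure is that where the paper simply asserts the equality of the limiting averages of $v$ and $w$ at the vertex, you prove it via an angular Poincar\'e inequality using $v-w=0$ on $\Gamma_h^\pm$ (a welcome filled-in detail), while the delicate $j$--$\rho$ limit interchange you flag is present at the same level of rigor in the paper's own argument.
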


\begin{remark}\label{rem:hn2}
As discussed in the introduction, the vanishing near a corner of the interior transmission eigenfunctions was considered in \cite{BL2017b}. Compared to the main result in \cite{BL2017b}, Corollary~\ref{cor:2.1} is more general in two aspects. First, the corner in \cite{BL2017b} must be a convex one, whereas in Corollary~\ref{cor:2.1}, the corner could be an arbitrary one as long as the corner is not degenerate, namely \eqref{eq:ass3} is fulfilled. Second, the regularity requirement on the eigenfunction $v$ is relaxed from \eqref{eq:vjgj} to \eqref{eq:ass1 int}. Moreover, technical condition $qw\in C^\alpha(\overline{ S}_h)$ in Corollary~\ref{cor:2.1} can be readily fulfilled when we consider the unique recovery of the inverse scattering problem under the condition that $q$ is a constant. Please refer to Lemma \ref{lem41}.
\end{remark}

\begin{proof}[Proof of Corollary~\ref{cor:2.1}] The proof follows from the one for Theorem~\ref{Th:1.1} with some necessary modifications, and we only outline it in the following. {\bl It is clear that the transmission eigenfunctions $v \in H^1(\Omega )$ and $w \in H^1(\Omega )$ to \eqref{eq:in eig reduce} fulfill \eqref{eq:in eignew}   for $\eta \equiv 0$.}   Since  $ \eta(x) \equiv 0 $ near the corner,  similar to \eqref{eq:intimportant} in Lemma \ref{lem:int1}, we have the following integral identity, 
\begin{align}\label{eq:intimportant int}
	( \widetilde f_{1j} (0)-f_2(0)) \int_{S_h} u_0(sx) {\rm d} x+\delta_j(s)  &= I_3 -\int_{S_h}  \delta \widetilde f_{1j} (x) u_0(sx) {\rm d} x +\int_{S_h}  \delta f_2(x) u_0(sx) {\rm d} x  ,
\end{align}
where $f_2(x)$, $\widetilde f_{1j} (x)$, $\delta_j(s)$, $I_3$, $ \delta \widetilde f_{1j} (x)$ and $ \delta f_2(x)$ are defined in \eqref{eq:vw},  \eqref{eq:deltajs}, \eqref{eq:intnotation} and  \eqref{eq:f1jf2 notation}, respectively.  

From \eqref{eq:u0w}, it follows that 
\begin{align}\label{eq:258sh}
	( \widetilde f_{1j} (0)-f_2(0)) \int_{S_h} u_0(sx) {\rm d} x&=( \widetilde f_{1j} (0)-f_2(0)) \int_{W} u_0(sx) {\rm d} x \\
	&\quad  -( \widetilde f_{1j} (0)-f_2(0)) \int_{W \backslash S_h} u_0(sx) {\rm d} x \notag \\
	&= 6 i (\widetilde f_{1j} (0)-f_2(0) ) (e^{-2\theta_M i }-e^{-2\theta_m i }  )  s^{-2} \notag\\
	 &\quad  -( \widetilde f_{1j} (0)-f_2(0)) \int_{W \backslash S_h} u_0(sx) {\rm d} x \notag. 
\end{align}

{\bl Combining \eqref{eq:ass1 int} with \eqref{eq:u0L2}} in Lemma \ref{lem:23}, one can see that
\begin{equation}\label{eq:deltajnew int page14}
	j^2 |\delta_j(s)| \leq    \frac{\sqrt { \theta_M-\theta_m } k^2 e^{-\sqrt{s \Theta } \delta_W } h } {\sqrt 2 }  j^{-\Upsilon }, 
\end{equation} 
where  $ \Theta  \in [0,h ]$ and $\delta_W$ is defined in \eqref{eq:xalpha}. By \eqref{eq:deltaf1j} {\bl in Lemma \ref{lem:int1}}, we can also deduce that
\begin{align}\label{eq:deltaf1j2}
& j^2 \left|\int_{S_h}  \delta \widetilde f_{1j} (x) u_0(j x) {\rm d} x \right |  \leq  \frac{2\sqrt{2\pi}(\theta_M- \theta_m) \Gamma(2 \alpha+4) }{ \delta_W^{2\alpha+4   }  } k^2 {\rm diam}({S_h})^{1-\alpha }  \nonumber \\
&\hspace{4.5cm} \times  (1+k) \|g_j\|_{L^2( {\mathbb S}^{1})}   j^{-\alpha   } \leq \Oh(j^{-(\alpha -\varrho ) }),
\end{align}
for $0< \varrho<\alpha$. After substituting \eqref{eq:258sh} into \eqref{eq:intimportant int}, we take $s=j$.  Since \eqref{eq:258sh}, multiply $j^2$ on the both sides of \eqref{eq:intimportant int}. Using the  assumptions \eqref{eq:ass1 int} and \eqref{eq:ass3}, by letting  $j \rightarrow \infty$, from \eqref{eq:1.5} {\bl in Lemma \ref{lem:1},   \eqref{eq:deltaf1j}, \eqref{eq:deltaf2} and \eqref{eq:I3} in Lemma \ref{lem:int1},}   and \eqref{eq:deltajnew int page14}, we prove that
$$
\lim_{j \rightarrow \infty} v_j(0) = \frac{f_2(0)}{-k^2}. 
$$ 
Since
\begin{align*}
	\lim_{j \rightarrow \infty}  v_j(0)&=\lim_{j \rightarrow \infty}  \lim_{ \rho \rightarrow +0 }\frac{1}{m(B(0, \rho  )\cap \Omega )} \int_{B(0, \rho ) \cap \Omega} v_j(x)  {\rm d} x\\
	&= \lim_{ \rho \rightarrow +0 }\frac{1}{m(B(0, \rho  ) \cap\Omega )} \int_{B(0, \rho )\cap \Omega } v(x)  {\rm d} x, \\
	\frac{f_2(0)}{-k^2}&= \lim_{ \rho \rightarrow +0 }\frac{1}{m(B(0, \rho  ) \cap\Omega )} \int_{B(0, \rho )\cap 
	\Omega } qw(x)  {\rm d} x,
\end{align*}
together  with  
$$
 \lim_{ \rho \rightarrow +0 }\frac{1}{m(B(0, \rho  )\cap \Omega )} \int_{B(0, \rho )\cap \Omega} v(x)  {\rm d} x = \lim_{ \rho \rightarrow +0 }\frac{1}{m(B(0, \rho  )\cap \Omega )} \int_{B(0, \rho )\cap \Omega} w(x)  {\rm d} x, 
$$
	we finish the proof of this corollary. 
	\end{proof}

\begin{remark}
If $V(x)$ is continuous near the corner $0$ and $ V(0) \neq 0$, from the fact that	
\begin{align*}
	&\lim_{ \rho \rightarrow +0 }\frac{1}{m(B(0, \rho  )\cap \Omega)} \int_{B(0, \rho )\cap \Omega}V(x) w(x)  {\rm d} x\\
	&= V(x_c)
	\lim_{ \rho \rightarrow +0 }\frac{1}{m(B(0, \rho  )\cap \Omega)} \int_{B(0, \rho )\cap \Omega}w(x)  {\rm d} x,
\end{align*}
we can prove that the vanishing property near the corner $0$ of the interior transmission eigenfunctions $v   \in H^1(\Omega )$ and $w \in H^1(\Omega )$ under the assumptions  \eqref{eq:ass3} and \eqref{eq:ass1 int}. 

\end{remark}

If stronger regularity conditions are satisfied by the conductive transmission eigenfunctions $v$ and $w$ to \eqref{eq:in eig}, we can show that more apparent vanishing properties hold at the corner. The rest of this section is devoted to this case. In fact, we have the following theorem.
%

\begin{theorem}\label{Th:1.2}
	Let $v \in H^2(\Omega )$ and $w  \in H^1(\Omega ) $ be eigenfunctions to \eqref{eq:in eig}. Assume that $\Omega \subset \R^2 $ contains a corner {\bl $\Omega\cap B_h= \Omega \cap W$ {\bl with the vertex being $0\in \partial \Omega$}, where $W$ is the sector defined in \eqref{eq:W}  and $h\in \mathbb R_+$}. Moreover,  there exits a sufficiently small neighbourhood $S_h$ (i.e. $h>0$ is sufficiently small) of $0$ in $\Omega$, such that  $qw  \in C^\alpha(\overline {S}_h ) $ and $\eta \in C^\alpha\left(\overline{\Gamma}_h^\pm \right)$ for $0< \alpha  <1$, and $v-w \in H^2(\Sigma_{\Lambda_h} )$.   Under the following assumptions: 
	\begin{itemize}
	\item[(a)] the function $\eta(x)$ doest not vanish at the vertex $0$, i.e., 
	\begin{equation}\label{eq:ass21}
		\eta(0) \neq 0, 
	\end{equation}
	\item[(b)]  the open angle of $S_h$ containing the corner satisfies
	\begin{equation}\label{eq:ass31}
-\pi < \theta_m < \theta_M < \pi \mbox{ and } \theta_M-\theta_m \neq \pi, 
\end{equation}
	\end{itemize}
	 then we have
	$
	v(0) =w(0)=0. 
	$
\end{theorem}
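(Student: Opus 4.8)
The plan is to reproduce the scheme of Theorem~\ref{Th:1.1}, but to exploit the stronger hypothesis $v\in H^2(\Omega)$ so as to bypass the Herglotz approximation entirely. In two dimensions the Sobolev embedding gives $H^2(\Omega)\hookrightarrow C^\alpha(\overline{S_h})$ for every $\alpha\in(0,1)$, so $f_1:=-k^2v$ is genuinely H\"older continuous and admits the splitting $f_1(x)=f_1(0)+\delta f_1(x)$ with $|\delta f_1(x)|\le\|f_1\|_{C^\alpha}|x|^\alpha$, while $f_2:=-k^2qw$ admits the splitting \eqref{eq:f1jf2 notation}. I would therefore start directly from the Green identity \eqref{eq:221 int} of Lemma~\ref{lem:int1}, which already holds for the eigenfunctions $v,w$ themselves, with no need to introduce the approximants $v_j$. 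Substituting the two splittings turns \eqref{eq:221 int} into
\[
(f_1(0)-f_2(0))\int_{S_h}u_0(sx)\,\rmd x+\int_{S_h}(\delta f_1-\delta f_2)u_0(sx)\,\rmd x=I_3-I_2^\pm,
\]
where $I_3$ is as in \eqref{eq:intnotation} and $I_2^\pm=\int_{\Gamma_h^\pm}\eta(x)u_0(sx)v(x)\,\rmd\sigma$.

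First I would dispose of the bulk and interior contributions. Writing $\int_{S_h}u_0(sx)\,\rmd x=\int_Wu_0(sx)\,\rmd x-\int_{W\setminus B_h}u_0(sx)\,\rmd x$ and using \eqref{eq:u0w} together with \eqref{eq:1.5} shows this integral is $\Oh(s^{-2})$, so the first term on the left is $\Oh(s^{-2})$; by \eqref{eq:xalpha}, exactly as in the derivation of \eqref{eq:deltaf2}, the second term is $\Oh(s^{-\alpha-2})$. The interior term obeys $|I_3|\le Ce^{-c'\sqrt s}$ by \eqref{eq:I3}, which is where the hypothesis $v-w\in H^2(\Sigma_{\Lambda_h})$ is used. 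Hence, after multiplying the identity by $s$, each of these tends to $0$ as $s\to+\infty$.

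The crux, and the step I expect to be the main obstacle, is the boundary term $I_2^\pm$, which in Theorem~\ref{Th:1.1} was unraveled through the Jacobi--Anger expansion of the Herglotz kernel in Lemma~\ref{lem:28}. Since $v$ is no longer an entire solution that expansion is unavailable, so I would instead rely only on the H\"older continuity of $v$: on $\Gamma_h^\pm$ I split $v(x)=v(0)+\delta v(x)$ with $|\delta v(x)|\le\|v\|_{C^\alpha}|x|^\alpha$ and $\eta(x)=\eta(0)+\delta\eta(x)$ as in \eqref{eq:eta}, which expands $I_2^\pm$ into a leading term $\eta(0)v(0)\int_{\Gamma_h^\pm}u_0(sx)\,\rmd\sigma$ plus three cross terms each carrying a factor $\delta v$ or $\delta\eta$. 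Because $|u_0(sx)|=e^{-\sqrt{sr}\,\omega(\theta)}$ with $\omega(\theta_m)>0$ and $\omega(\theta_M)>0$ on the two rays (see \eqref{eq:omegamu}), Lemma~\ref{lem:zeta} bounds each cross term by $\Oh(s^{-1-\alpha})$, while Lemma~\ref{lem:u0 int} evaluates the leading term as $2\eta(0)v(0)s^{-1}(\mu(\theta_m)^{-2}+\mu(\theta_M)^{-2})$ up to exponentially small error. Thus $sI_2^\pm\to2\eta(0)v(0)(\mu(\theta_m)^{-2}+\mu(\theta_M)^{-2})$. This direct treatment is in fact simpler than the analysis of Lemma~\ref{lem:28}, since the approximation errors $\delta_j(s)$ and $\xi_j^\pm(s)$ never arise.

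Finally I would multiply the identity by $s$ and let $s\to+\infty$: the left-hand side and $sI_3$ vanish, leaving
\[
2\eta(0)v(0)\bigl(\mu(\theta_m)^{-2}+\mu(\theta_M)^{-2}\bigr)=0.
\]
Assumption \eqref{eq:ass21} gives $\eta(0)\ne0$, and Lemma~\ref{lem:29} under \eqref{eq:ass31} gives $\mu(\theta_m)^{-2}+\mu(\theta_M)^{-2}\ne0$, so necessarily $v(0)=0$. Since $v$ is continuous and $w=v$ on $\Gamma_h^\pm$ by the conductive transmission condition, letting $x\to0$ along the boundary yields $w(0)=v(0)=0$, which completes the argument.
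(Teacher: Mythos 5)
Your proposal is correct and coincides essentially with the paper's own proof: the paper likewise starts from the Green identity \eqref{eq:221 int} applied directly to $v,w$ (no Herglotz approximants), splits $f_1$, $f_2$, $v$, $\eta$ into their values at $0$ plus H\"older remainders, evaluates the leading boundary term via Lemma~\ref{lem:u0 int}, bounds the cross and bulk remainder terms via Lemma~\ref{lem:zeta} and \eqref{eq:xalpha}, controls $I_3$ by \eqref{eq:I3}, and then multiplies by $s$ and sends $s\to\infty$ to invoke Lemma~\ref{lem:29} and $\eta(0)\neq 0$. Your closing remark deducing $w(0)=0$ from $w=v$ on $\Gamma_h^\pm$ is a harmless addition that the paper leaves implicit.
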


\begin{proof}
{\bl It is clear that the transmission eigenfunctions $v \in H^2(\Omega )$ and $w \in H^1(\Omega )$ to \eqref{eq:in eig} fulfill \eqref{eq:in eignew}.}  	Recall that $f_1$ and $f_2$ are defined by \eqref{eq:vw}. {\bl From Lemma \ref{lem:int1}, we know that \eqref{eq:221 int} is satisfied, which can be further formulated as 
	  \begin{align}\label{eq:1.55}
	 	\int_{S_h  } ( f_{1}-f_2)u_0(sx) {\rm d} x 
	 	&=I_3
	 	- \int_{\Gamma_{h} ^\pm } \eta(x)  u_0 (sx) v(x) {\rm d} \sigma, 
	 \end{align}
	 where $I_3$ is defined in \eqref{eq:intnotation}. } 
Since  $ f_2\in C^\alpha(\overline {S}_h )$ and $\eta \in C^\alpha\left(\overline{\Gamma}_h^\pm \right)$,   {\bl we know that  $\eta$ and $f_2$ have the expansions \eqref{eq:eta} and \eqref{eq:f1jf2 notation} around the origin, respectively. Furthermore, due to the fact that  $v \in H^2(S_h)$, which can be embedded into $C^\alpha(\overline { S}_h )$, we have} the following expansions
\begin{align}\label{eq:splitting}
\begin{split}
	f_1(x)&=f_1(0)+\delta f_1(x), \quad |\delta f_1(x)| \leq \|f_1\|_{C^\alpha } |x|^\alpha,\\
v(x)&=v(0)+\delta v(x), \quad |\delta v (x)| \leq \|v\|_{C^\alpha } |x|^\alpha. 
\end{split}
\end{align}
Substituting \eqref{eq:eta}, \eqref{eq:f1jf2 notation} and \eqref{eq:splitting} into \eqref{eq:1.55}, we can derive that
 \begin{align}\label{eq:1.57}
	 	&(f_1(0)-f_2(0))\int_{S_h  } u_0(sx) {\rm d} x +	\int_{S_h  } ( \delta f_{1}-\delta f_2)u_0(sx) {\rm d} x 
	 	\nonumber\\
	 	& =I_3
	 	- \eta(0)  v(0)\int_{\Gamma_{h} ^\pm }  u_0 (sx)  {\rm d} \sigma -\eta(0) \int_{\Gamma_{h} ^\pm } \delta v(x) u_0 (sx) {\rm d} \sigma - v(0) \int_{\Gamma_{h} ^\pm } \delta \eta (x) u_0 (sx) {\rm d} \sigma  \nonumber\\
	 	& \quad - \int_{\Gamma_{h} ^\pm } \delta \eta (x)  \delta v(x) u_0 (sx) {\rm d} \sigma . 
	 \end{align}
{\bl Using \eqref{eq:I311} in Lemma \ref{lem:u0 int},}  it is easy to see that
\begin{align}\label{eq:1.59}
	 \eta(0)  v(0)\int_{\Gamma_{h} ^+ }  u_0 (sx)  {\rm d} \sigma &=2 s^{-1}v(0)\eta(0) \left( \mu(\theta_M )^{-2}-   \mu(\theta_M )^{-2} e^{ -\sqrt{sh} \mu(\theta_M ) }\right.  \\&\left. \hspace{3.5cm} -  \mu(\theta_M )^{-1} \sqrt{sh}   e^{ -\sqrt{sh} \mu(\theta_M ) }  \right  ) \nonumber \\
	\eta(0)  v(0)\int_{\Gamma_{h} ^- }  u_0 (sx)  {\rm d} \sigma &=2 s^{-1}v(0)\eta(0) \left( \mu(\theta_m )^{-2}-   \mu(\theta_m )^{-2} e^{ -\sqrt{sh}\mu(\theta_m )} \right.  \nonumber \\&\left. \hspace{3.5cm}  -  \mu(\theta_m )^{-1} \sqrt{sh}   e^{ -\sqrt{sh}\mu(\theta_m ) }  \right  ), \nonumber
\end{align}
where $\mu(\theta)$ is defined in \eqref{eq:omegamu}. 
Besides, from \eqref{eq:splitting}, using \eqref{eq:zeta}, we can estimate
\begin{align}\label{eq:1.60}
	s\left| \int_{\Gamma_{h} ^- } \delta v(x) u_0 (sx) {\rm d} \sigma \right| &\leq  s\|v\|_{C^\alpha }  \int_0^h r^\alpha  e^{-\sqrt{sr} \omega(\theta_m) } {\rm d} r=\Oh(s^{-\alpha}), \\ 
	s\left| \int_{\Gamma_{h} ^- } \delta \eta(x) u_0 (sx) {\rm d} \sigma \right| &\leq  s\|\eta \|_{C^\alpha }  \int_0^h r^\alpha  e^{-\sqrt{sr} \omega(\theta_m) } {\rm d} r=\Oh(s^{-\alpha}),  \nonumber \\ 
	s\left| \int_{\Gamma_{h} ^- } \delta v(x) \delta \eta (x) u_0 (sx) {\rm d} \sigma \right| &\leq  s\|v\|_{C^\alpha }  \|\eta \|_{C^\alpha }  \int_0^h r^{2\alpha}  e^{-\sqrt{sr} \omega(\theta_m) } {\rm d} r=\Oh(s^{-2\alpha}), \nonumber \\
	s\left| \int_{S_h  }  \delta f_{1} u_0(sx) {\rm d} x  \right|  & \leq s \cdot \|f_1\|_{C^\alpha } \int_W |u_0(sx)| |x|^\alpha {\rm d} x \nonumber\\
	& \leq \frac{2 \|f_1\|_{C^\alpha } (\theta_M-\theta_m )\Gamma(2\alpha+4) }{ \delta_W^{2\alpha+4}} s^{-\alpha-1}\nonumber\\
	s \left| \int_{S_h  }  \delta f_{2} u_0(sx) {\rm d} x  \right|  & \leq s \cdot \|f_2\|_{C^\alpha } \int_W |u_0(sx)| |x|^\alpha {\rm d} x \nonumber \\
	& \leq  \frac{2 \|f_2\|_{C^\alpha } (\theta_M-\theta_m )\Gamma(2\alpha+4) }{ \delta_W^{2\alpha+4}} s^{-\alpha-1}.  \nonumber
\end{align}

 Substituting \eqref{eq:1.59} into \eqref{eq:1.57} and multiplying $s$ on the both sides of \eqref{eq:1.57}, after arranging terms, we obtain that
  \begin{align}\label{eq:1.57new}
	 	2& v(0)\eta(0) \left( \mu(\theta_M )^{-2} +\mu(\theta_m )^{-2}   \right)= 2v(0)\eta(0) \Big ( \mu(\theta_M )^{-2} e^{ -\sqrt{sh} \mu(\theta_M )  }\\
	 	& + \mu(\theta_M )^{-1} \sqrt{sh}   e^{ -\sqrt{sh} \mu(\theta_M ) }  +\mu(\theta_m )^{-2} e^{ -\sqrt{sh} \mu(\theta_m )  }  + \mu(\theta_m )^{-1} \sqrt{sh}   e^{ -\sqrt{sh} \mu(\theta_m ) }  \Big )\nonumber \\
	 	&+s\Big [ I_3 - (f_1(0)-f_2(0))\int_{S_h  } u_0(sx) {\rm d} x -	\int_{S_h  } ( \delta f_{1}-\delta f_2)u_0(sx) {\rm d} x 
	 	\nonumber\\
	 	&   -\eta(0) \int_{\Gamma_{h} ^\pm } \delta v(x) u_0 (sx) {\rm d} \sigma  - v(0) \int_{\Gamma_{h} ^\pm } \delta \eta (x) u_0 (sx) {\rm d} \sigma - \int_{\Gamma_{h} ^\pm } \delta \eta (x)  \delta v(x) u_0 (sx) {\rm d} \sigma \Big]. \nonumber 
	 \end{align}
 Since { $v-w\in H^2(\Sigma_{ \Lambda_h} )$,}  \eqref{eq:I3} still holds. In \eqref{eq:1.57new}, letting $s \rightarrow \infty$, from   \eqref{eq:1.5},  \eqref{eq:I3} {\bl in Lemma \ref{lem:int1}}, \eqref{eq:258sh} and \eqref{eq:1.60}, we can show that
 $$
\eta(0) \left( \mu(\theta_M )^{-2} +\mu(\theta_m )^{-2} \right) v(0) =0. 
 $$
 Under the assumption \eqref{eq:ass31},  from {\bl Lemma \ref{lem:29}, we have} $\mu(\theta_M )^{-2} +\mu(\theta_m )^{-2} \neq 0$. Since $\eta(0)\neq 0$ from \eqref{eq:ass21}, we finish the proof of this theorem. 
%
%
%
\end{proof}

\begin{remark}\label{rem:2.5}
Under the $H^2$ regularity,  the interior  transmission eigenfunctions to \eqref{eq:in eig reduce} have been  shown that they always  vanish at a corner point if the interior angle of the corner is not $\pi$; see  \cite[Theorem 4.2]{Bsource}  for more details. 
\end{remark}

\section{Vanishing near corners of conductive transmission eigenfunctions: three-dimensional case}\label{sec:3}

In this section, we study the vanishing property of the conductive transmission eigenfunctions for the 3D case. In principle, we could also consider a generic corner in the usual sense as the one for the 2D case. However, in what follows, we introduce a more general corner geometry that is described by {\bl $S_h \times (-M,M)$, where $S_h$ is defined in \eqref{eq:sh}} and $M\in\mathbb{R}_+$. It is readily seen that {\bl $S_h \times (-M,M)$}  actually describes an edge singularity and we call it a 3D corner for notational unification. Suppose that the Lipschitz domain $\Omega\subset\mathbb{R}^3$ {\bl with $0\in \partial \Omega$} possesses a 3D corner. {\bl Let $0 \in \R^{2}$} be the vertex of $S_h$ and {\bl $ x_3^c \in (-M,M)$.}  Then {\bl $(0,x_{3}^c )$} is defined as the edge point of {\bl $S_h \times (-M,M)$.}  
In Figure \ref{fig3d}, we give a schematic illustration of the geometry considered in 3D.    In this section, under some appropriate assumptions, we show that the conductive transmission eigenfunctions $v$ and $w$ vanish at {\bl $(0,x_{3}^c )$.}  Since the CGO solution constructed in Lemma \ref{lem:1} is only two dimensional, in order to make use of the similar arguments of Theorem \ref{Th:1.1}, we introduce the following dimension reduction operator.  The dimension reduction operator technique is also introduced in \cite[Lemma 3.4]{Bsource} for studying the vanishing property of nonradiating sources and the transmission eigenfunctions at edges in three dimension. Similar to  Theorem \ref{Th:1.1}, we first assume that $v$ is only $H^1$ smooth but can be approximated by the Herglotz wave functions  with some mild assumptions, where in Theorem \ref{Th:3.1} the interior angle of $S_h$ cannot be $\pi$.  Besides, if $v$ has $H^2$ regularity near the edge point, in Theorem \ref{Th:3.2} we also prove the vanishing property of $v$ and $w$ near the edge point.

\begin{figure}
  \centering
  \includegraphics[width=0.25\textwidth]{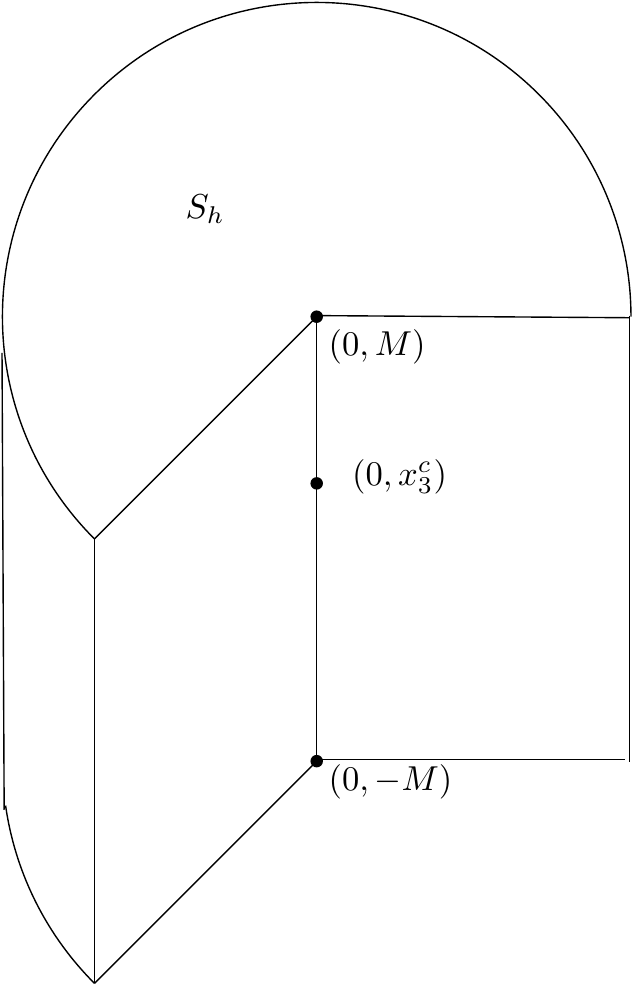}
  \caption{Schematic illustration of the corner in 3D.}
  \label{fig3d}
\end{figure}


\begin{definition}\label{Def}
	{\bl Let ${S_h}\subset \R^{2}$ be defined in \eqref{eq:sh}, $M>0$. For a given function $g$ with the domain $S_h \times (-M,M )$.  Pick up any point $ x_{3}^c \in (-M, M)$. Suppose that $\psi \in C_0^{\infty}( (x_{3}^c-L, x_{3}^c+  L) )$ is  a nonnegative function and $\psi\not\equiv0$, where $L$  is   sufficiently small  such that $ (x_{3}^c-L, x_{3}^c+  L) \subset   (-M,M)$, and write $x=(x', x_3) \in \R^{3}, \, x'\in \R^{2}$.  The dimension reduction operator $\mathcal{R}$ is defined by
	\begin{equation}\label{dim redu op}
	\mathcal{R}(g)(x')=\int_{x_{3}^c -L}^{x_{3}^c+ L} \psi(x_3)g(x',x_3)\,\rmd x_3,
	\end{equation}
	where $x'\in {S_h}$. } 
\end{definition}

\begin{remark}
The assumption on the non-negativity of 	$\psi$ plays an important role in our proof of Theorem \ref{Th:3.1} in what follows, where we use the integral mean value theorem to carefully investigate the asymptotic property of the parameter $s$ appearing in the  CGO solution $u_0(sx')$ given in Lemma \ref{lem:1} as $s \rightarrow \infty$. In order to use the two dimensional  CGO solution $u_0(sx')$ to prove the vanishing property of the conductive transmission eigenfunctions in $\R^3$, we need the dimension reduction operator defined in Definition \ref{Def} in our proof of Theorem \ref{Th:3.1}. 
\end{remark}

Before presenting the main results of this section, we first analyze the regularity of the functions after applying the dimension reduction operator.  Using a similar argument of \cite[Lemma 3.4]{Bsource}, we can prove the following lemma, whose detailed proof is omitted. 

\begin{lemma}\label{lem h2}
	Let  $g\in H^2({S_h}\times(-M ,M))\cap  C^\alpha (\overline {S}_h\times [-M,M])$, where $0<\alpha<1$. Then 
	$$
	{\mathcal R}(g)(x') \in H^2({S_h})\cap  C^\alpha (\overline{S}_h).
	$$ 
 {\bl Similarly, if $g\in H^1({S_h}\times(-M ,M))$, we have
$
	{\mathcal R}(g)(x') \in H^1({S_h}).
	$ }
   
\end{lemma}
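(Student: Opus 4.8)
The plan is to prove the two assertions separately, in each case exploiting that the dimension reduction operator $\mathcal{R}$ acts only through integration in the $x_3$ variable against the fixed smooth weight $\psi$, so that every continuity estimate and every differentiation in the $x'$ variables passes directly through the integral. First I would dispose of the H\"older statement by a direct estimate. For $x',y'\in\overline{S}_h$, the hypothesis $g\in C^\alpha(\overline{S}_h\times[-M,M])$ gives the pointwise bound $\abs{g(x',x_3)-g(y',x_3)}\leq\norm{g}_{C^\alpha}\abs{x'-y'}^\alpha$ uniformly in $x_3$, whence
$$
\abs{\mathcal{R}(g)(x')-\mathcal{R}(g)(y')}\leq\int_{x_3^c-L}^{x_3^c+L}\psi(x_3)\,\abs{g(x',x_3)-g(y',x_3)}\,\rmd x_3\leq\norm{\psi}_{L^1}\norm{g}_{C^\alpha}\abs{x'-y'}^\alpha,
$$
and an analogous simpler bound controls $\sup_{\overline{S}_h}\abs{\mathcal{R}(g)}$. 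Together these give $\mathcal{R}(g)\in C^\alpha(\overline{S}_h)$ with an explicit bound on its H\"older norm.

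Second, and this is the substantive part, I would establish the Sobolev statement by showing that the $x'$-derivatives commute with $\mathcal{R}$, namely $\partial_{x'}^\beta\mathcal{R}(g)=\mathcal{R}(\partial_{x'}^\beta g)$ in the weak sense for every multi-index $\beta$ in the $x'$ variables with $\abs{\beta}\leq 2$. To prove this I would take an arbitrary test function $\phi\in C_0^\infty(S_h)$ and form the product test function $\Phi(x',x_3)=\phi(x')\psi(x_3)$, which lies in $C_0^\infty(S_h\times(-M,M))$ precisely because $\supp\psi\subset(x_3^c-L,x_3^c+L)\subset(-M,M)$. Applying the definition of the weak derivative of $g\in H^2(S_h\times(-M,M))$ tested against $\Phi$, and then invoking Fubini's theorem to carry out the $x_3$ integration, converts that identity into the claimed weak-derivative relation for $\mathcal{R}(g)$. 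The only hypothesis this uses is that the pure $x'$-derivatives $\partial_{x'}^\beta g$ of order $\leq 2$ belong to $L^2(S_h\times(-M,M))$, which is exactly what $g\in H^2(S_h\times(-M,M))$ guarantees, since the $H^2$-norm of the three-variable function dominates in particular all derivatives that do not touch $x_3$.

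Once the commutation identity is in hand, the $L^2$ bound is routine: by the Cauchy--Schwarz inequality applied to the $x_3$ integral and then Fubini,
$$
\norm{\partial_{x'}^\beta\mathcal{R}(g)}_{L^2(S_h)}^2\leq\norm{\psi}_{L^2}^2\int_{S_h}\int_{x_3^c-L}^{x_3^c+L}\abs{\partial_{x'}^\beta g(x',x_3)}^2\,\rmd x_3\,\rmd x'\leq\norm{\psi}_{L^2}^2\norm{g}_{H^2(S_h\times(-M,M))}^2,
$$
where in the last step the $x_3$-integration domain has been enlarged to $(-M,M)$. Summing over $\abs{\beta}\leq 2$ yields $\mathcal{R}(g)\in H^2(S_h)$, and the $H^1$ statement follows verbatim with $\abs{\beta}\leq 1$ under the weaker hypothesis $g\in H^1(S_h\times(-M,M))$. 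I expect the only point demanding genuine care to be the weak commutation $\partial_{x'}^\beta\mathcal{R}(g)=\mathcal{R}(\partial_{x'}^\beta g)$, i.e. the rigorous justification of differentiating under the integral sign at the level of weak derivatives, since $g$ need not be differentiable pointwise in $x_3$; the remaining steps are immediate applications of the Cauchy--Schwarz inequality and Fubini's theorem.
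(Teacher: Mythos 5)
Your proof is correct. Note that the paper itself omits the detailed proof of this lemma, deferring to \cite[Lemma 3.4]{Bsource}; your argument --- the direct H\"older estimate in $x'$ uniform in $x_3$, plus the weak commutation $\partial_{x'}^\beta \mathcal{R}(g)=\mathcal{R}(\partial_{x'}^\beta g)$ obtained by testing against the separated functions $\phi(x')\psi(x_3)$ and applying Fubini, then Cauchy--Schwarz for the $L^2$ bounds --- is precisely the standard argument that reference employs, so you have simply supplied the omitted details.
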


{\bl 
In Theorem \ref{Th:3.1}, we shall prove the vanishing property of conductive transmission eigenfunctions at an edge corner in 3D. Let us first introduce the mathematical setup. 

Let ${S_h}\subset \R^{2}$ be defined in \eqref{eq:sh}, $M>0$, $0<\alpha<1$. For any fixed {\bl $x_3^c \in (-M,M)$} and $L>0$ defined in Definition \ref{Def}, we suppose that  $L $ is sufficiently small such that $(x_3^c-L,x_3^c+L) \subset (-M,M) $.  Write  $x=(x', x_3) \in \R^{3}, \, x'\in \R^{2}$. Let $v,w\in H^1({S}_h\times(-M,M))$ fulfill that 
	\begin{align}\label{eq:3d in eig}
	\left\{
	\begin{array}{l}
	\Delta v+ k^2  v=0, \qquad x'\in {S_h}, -M<x_3<M,\\[5pt] 
	\Delta w+k^2 q w=0, \quad x'\in {S_h}, -M<x_3<M,\\[5pt] 
	w= v\quad \partial_\nu v + \eta v=\partial_\nu w, \quad x'\in\Gamma_h^\pm, -M<x_3<M,
	\end{array}
	\right.
	\end{align}
	where $\Gamma_h^\pm $ are defined in \eqref{eq:sh}, $\nu$ is the outward normal vector to $\Gamma_h^\pm \times (-M,M)$,   $q\in L^\infty(S_h \times (-M,M)) $ and $\eta \in L^\infty(\Gamma_h^\pm \times (-M,M) )$ is independent of $x_3$.

Lemmas \ref{lem:32} and \ref{lem:37 coeff} will be used to prove Theorem \ref{Th:3.1} in what follows.

\begin{lemma}\label{lem:32}
	Suppose that $v,w\in H^1({S}_h\times(-M,M))$ fulfill  \eqref{eq:3d in eig}. 
	Denote
	\begin{eqnarray}\label{construct G}
	\begin{split}
		G(x')&=& \int_{-L}^{L} \psi''(x_3)v(x', x_3)\rmd x_3-k^2 {\mathcal R} (v)(x'),\\
		\widetilde G(x')&=& \int_{-L}^{L} \psi''(x_3)w(x', x_3)\rmd x_3-k^2 {\mathcal R} (qw)(x'),
			\end{split}
	\end{eqnarray}
	where $\mathcal{R}$ is the dimension reduction operator associated with $\psi$ defined in \eqref{Def}. Then there hold that
	\begin{align}\label{eq:eig reduc}
\left\{
\begin{array}{l}
 \Delta_{x'} {\mathcal R} (v) (x')
    =G(x') \hspace*{7.8cm} \mbox{ in } S_h, \\[5pt] 
 \Delta_{x'} {\mathcal R} (w) (x')    =\widetilde G(x')\hspace*{7.6cm}\ \mbox{ in } S_h, \\[5pt] 
\mathcal{R}(w)(x')= \mathcal{R}(v) (x'),\ \partial_\nu {\mathcal R} (v) (x') + \eta(x'){\mathcal R} ( v)  (x')= \partial_\nu {\mathcal R}( w)  (x') \ \ \ \ \mbox{ on } \Gamma_h^\pm ,
  \end{array}
\right.
\end{align}
in the distributional sense, where  $\nu$ signifies the exterior unit normal vector to $\Gamma_h^\pm $. Let \begin{equation}\label{eq:F1xi}
	{ G}(x')-\widetilde G(x')=F_{1} (x')+F_{2} (x')+F_{3} (x'),
\end{equation}
where
\begin{align*}
	F_{1} (x')&=
	\int_{-L}^{L}\psi''(x_3)(v(x', x_3)-w(x', x_3)) {\rm d} x_3,\, F_{2} (x')=k^2  {\mathcal R} (q w)(x'),\\ F_{3} (x')&=-k^2  {\mathcal R} (v)(x') . 
\end{align*}
Recall that the CGO solution $u_0(sx')$ is defined in \eqref{eq:u0} with the parameter $s\in \mathbb R_+$. There holds the following integral identity,
\begin{equation}\label{eq:36 int}
	\begin{split}
		&\int_{S_h   }  ( F_{1} (x')+F_{2} (x')+F_{3} (x') ) u_0(sx')  {\rm d} x'=I_3 - \int_{\Gamma_{ h} ^\pm } \eta(x') {\mathcal R} ( v)(x') u_0 (sx')  {\rm d} \sigma, 
	\end{split}
\end{equation}
where $I_3=\int_{\Lambda_h   } ( u_0(sx') \partial_\nu {\mathcal R}(v-w )(x')-{\mathcal R}(v-w) (x') \partial_\nu u_0(sx') ) {\rm d} \sigma $. 
If  $qw \in C^\alpha(\overline{S}_h \times [-M,M]  )$   for $0<\alpha<1$   and  $v-w\in H^2({S_h}\times (-M,M) )$, then we have  $F_{1} (x') \in C^\alpha(\overline{ S}_h)$ for $\alpha\in(0,1)$ and $F_{2} (x') \in C^\alpha(\overline{ S}_h )$. 
\end{lemma}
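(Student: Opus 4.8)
The plan is to establish Lemma \ref{lem:32} in three logically separate pieces: the transmission system \eqref{eq:eig reduc} for the reduced functions, the integral identity \eqref{eq:36 int}, and the H\"older regularity of $F_1$ and $F_2$. First I would derive \eqref{eq:eig reduc}. Applying $\mathcal R$ to the first equation of \eqref{eq:3d in eig} and using that $\mathcal R$ commutes with $\Delta_{x'}$ (since it integrates only in $x_3$), I would write $\Delta_{x'}\mathcal R(v)=\mathcal R(\Delta_{x'} v)=\mathcal R(-\partial_{x_3}^2 v-k^2 v)$. The key computational step is an integration by parts in $x_3$: because $\psi\in C_0^\infty$, the boundary terms vanish and $\int \psi(x_3)\partial_{x_3}^2 v\,\rmd x_3=\int\psi''(x_3)v\,\rmd x_3$, which reproduces exactly the first term of $G(x')$ in \eqref{construct G}. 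The same manipulation applied to $w$ (using $\Delta w+k^2qw=0$) yields $\widetilde G$, and applying $\mathcal R$ to the conductive boundary condition on $\Gamma_h^\pm\times(-M,M)$, together with the hypothesis that $\eta$ is independent of $x_3$ so it factors out of the $x_3$-integral, gives the transmission conditions on $\Gamma_h^\pm$. The regularity $\mathcal R(v),\mathcal R(w)\in H^1(S_h)$ needed to make sense of these identities in the distributional sense follows from the $H^1$ part of Lemma \ref{lem h2}.

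Next I would prove the integral identity \eqref{eq:36 int}. By Lemma \ref{lem h2}, $\mathcal R(v),\mathcal R(w)\in H^1(S_h)$ with distributional Laplacians in $L^2(S_h)$, so $\mathcal R(v)-\mathcal R(w)\in H^1_\Delta(S_h)$, and together with $u_0(sx')\in H^1(S_h)$ (Lemma \ref{lem:23}) the Green identity of Lemma \ref{lem:green} applies. Subtracting the first two equations of \eqref{eq:eig reduc} gives $\Delta_{x'}\mathcal R(v-w)=G-\widetilde G=F_1+F_2+F_3$ in $S_h$, with boundary data $\mathcal R(v-w)=0$ and $\partial_\nu\mathcal R(v-w)=-\eta\,\mathcal R(v)$ on $\Gamma_h^\pm$ coming from the reduced transmission conditions. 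Plugging these into \eqref{eq:GIN1} with $f=\mathcal R(v-w)$ and $g=u_0(sx')$, and using $\Delta u_0(sx')=0$ in $S_h$, the boundary $\partial S_h$ splits into $\Gamma_h^\pm$ and $\Lambda_h$; the $\Gamma_h^\pm$ contribution produces exactly the $\eta\,\mathcal R(v)\,u_0$ term and the $\Lambda_h$ contribution is precisely $I_3$. This mirrors the derivation of \eqref{eq:221 int} in Lemma \ref{lem:int1}, now carried out for the reduced two-dimensional problem.

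Finally I would address the H\"older regularity of $F_1$ and $F_2$, which I expect to be the main obstacle since it is where the hypotheses on $v-w$ and $qw$ are genuinely consumed. For $F_2=k^2\mathcal R(qw)$, the assumption $qw\in C^\alpha(\overline S_h\times[-M,M])$ combined with the $C^\alpha$ part of Lemma \ref{lem h2} gives $\mathcal R(qw)\in C^\alpha(\overline S_h)$ directly, hence $F_2\in C^\alpha(\overline S_h)$. For $F_1=\int_{-L}^{L}\psi''(x_3)(v-w)\,\rmd x_3$ the situation is more delicate: the hypothesis is only $v-w\in H^2(S_h\times(-M,M))$, not $C^\alpha$, so I cannot invoke the $C^\alpha$ statement of Lemma \ref{lem h2} as a black box. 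Instead I would argue that $F_1$ is itself a dimension-reduction-type integral of an $H^2$ function against a smooth compactly supported weight $\psi''$, and invoke the two-dimensional Sobolev embedding $H^2(S_h)\hookrightarrow C^\alpha(\overline S_h)$ for $\alpha\in(0,1)$ after first showing $F_1\in H^2(S_h)$. The latter follows because differentiation in $x'$ passes under the $x_3$-integral and $\int_{-L}^L\psi''(x_3)\,\partial_{x'}^\beta(v-w)\,\rmd x_3$ is controlled in $L^2(S_h)$ by $\|v-w\|_{H^2(S_h\times(-M,M))}$ via the Cauchy--Schwarz inequality in $x_3$ for $|\beta|\le 2$, exactly the mechanism used for the $H^2$ assertion in Lemma \ref{lem h2}. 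This completes the proof.
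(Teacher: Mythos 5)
Your proposal is correct and follows essentially the same route as the paper's proof: integration by parts in $x_3$ (with boundary terms killed by $\psi\in C_0^\infty$ and $\eta$ factoring out of the boundary condition) to obtain the reduced system, the $H^1_\Delta$ Green identity of Lemma \ref{lem:green} applied to $\mathcal R(v-w)$ and the harmonic CGO solution $u_0(sx')$ to get \eqref{eq:36 int}, and Lemma \ref{lem h2} together with the two-dimensional embedding $H^2(S_h)\hookrightarrow C^\alpha(\overline{S}_h)$ for the regularity of $F_1$ and $F_2$. Your additional care in observing that $F_1$ involves the weight $\psi''$ rather than $\psi$, so the $H^2$ mechanism behind Lemma \ref{lem h2} must be re-run for this weight rather than quoted verbatim, is a legitimate sharpening of a point the paper passes over silently.
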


\begin{proof}
For the edge point $(0,x_3^c)\in S_h \times (-M,M)$, where $x_3^c\in (-M,M)$, without loss of generality, in the subsequent analysis, we assume that $x_3^c=0$. Since $\Delta_{x'} v=-k^2 v-\partial_{x_3}^2 v$ and $\Delta_{x'} w=-k^2 q w-\partial_{x_3}^2 w$, by the dominate  convergence theorem, integration by parts gives 
\begin{align}
    \Delta_{x'} {\mathcal R} (v) (x')&=
     \int_{-L}^{L} \psi''(x_3)v(x', x_3)\rmd x_3-k^2 {\mathcal R} (v)(x') =G(x'),\label{add1} \\
     \Delta_{x'} {\mathcal R} (w) (x')&    = \int_{-L}^{L} \psi''(x_3)w(x', x_3)\rmd x_3-k^2 {\mathcal R} (qw)(x') =\widetilde G(x'). \label{add2}
    \end{align}
	Moreover, we have
	\begin{equation}\label{3eq:bound1}
		\mathcal{R}(w)(x')= \mathcal{R}(v) (x') \mbox { on } \Gamma_h^\pm
	\end{equation}
	in the sense of distribution, since $w(x',x_3)=v(x',x_3)$ when $x' \in \Gamma_h^\pm$ and $-L < x_3 < L$. Similarly, using the fact that  $\eta$ is independent  of $x_3$, we can easily show that 
	\begin{equation}\label{3eq:bound2}
		\partial_\nu {\mathcal R} (v) (x') + \eta(x'){\mathcal R} ( v)  (x')= \partial_\nu {\mathcal R}( w)  (x') \mbox { on } \Gamma_h^\pm,
	\end{equation}
in the sense of distribution.

Subtracting \eqref{add2} from \eqref{add1}, combining with the boundary condition \eqref{3eq:bound1} and \eqref{3eq:bound2}, we deduce that
\begin{equation}\label{eq:310 pde}
\begin{split}
	&\Delta_{x'}({\mathcal R}(v)(x')-{\mathcal R}(w)(x') )=F_1 (x')+F_2(x')+F_3 (x') \mbox{ in } S_h, \\
	&\mathcal{R}(v)(x')- \mathcal{R}(w) (x')=0,\ \partial_\nu {\mathcal R} (v) (x') - \partial_\nu {\mathcal R}( w)  (x') = -\eta(x'){\mathcal R} ( v)  (x') \mbox{ on } \Gamma_h^\pm . 
	\end{split}
	\end{equation}

Recall that $u_0(sx')\in H^1(S_h)$ from Lemma \ref{lem:23}. Since $v,\, w \in H^1(S_h\times (-M,M) )$ and $q\in L^{\infty}(S_h \times (-M,M))$, by virtue of Lemma \ref{lem h2}, it yields that $F_1,\ F_2,\, F_3 \in L^2(S_h)$. By Lemma \ref{lem:green} and using the fact that $\Delta_{x'} u_0(sx')=0$ in $S_h$, we have the following Green identity 
\begin{equation}\label{eq:311 int}
	\int_{S_h} u_0(sx') \Delta_{x'}{\mathcal R} (v-w)(x')\rmd x'=\int_{\partial S_h} (u_0(sx') \partial_\nu {\mathcal R}(v-w)(x')- {\mathcal R}(v-w)(x') \partial_\nu u_0(sx'))\rmd \sigma .
\end{equation}
 Substituting \eqref{eq:310 pde} into \eqref{eq:311 int},  it yields \eqref{eq:36 int}. 

Recall that $F_1$ and $F_2$ are defined in \eqref{eq:F1xi}. Since  $v-w\in H^2({S_h}\times(-M,M))$, from  Lemma \ref{lem h2}, we know that  $F_{1} (x') \in H^2(S_h)$,  which can be embedded into $C^\alpha(\overline{ S}_h)$ for $\alpha\in(0,1)$. Moreover, from  Lemma \ref{lem h2},  we have $F_{2} (x') \in C^\alpha(\overline{ S}_h )$, since $qw \in C^{\alpha}(\overline{ S}_h \times [-M,M] )$ and $0<\alpha<1$. 	
\end{proof}

 \begin{lemma}
 	\label{lem:int2}
Let $S_h$ and $\Gamma_h^\pm$ be defined in \eqref{eq:sh}. 		Suppose that $v,w\in H^1({S}_h\times(-M,M))$ fulfill  \eqref{eq:3d in eig}.  Recall that the CGO solution $u_0(sx)$ is defined in \eqref{eq:u0} with the parameter $s\in \mathbb R_+$.  	Let
$$
	F_{3j} (x')=-k^2  {\mathcal R} (v_j)(x') ,
	$$ 
	where $v_j$ is the Herglotz wave function given by
\begin{equation}\label{eq:herg3}
	v_j(x)=\int_{{\mathbb S}^{2}} e^{i k d \cdot x} g_j(d ) {\rm d} \sigma(d ), \quad d \in {\mathbb S}^{2}. 
\end{equation}
Then  $F_{3j} (x') \in C^\alpha (\overline S_h )$ and it has the expansion
\begin{equation}\label{eq:F3j}
		F_{3j}(x')=F_{3j} (0)+\delta F_{3j }(x'),\quad |\delta F_{3j} (x')  | \leq \|F_{3j} \|_{C^\alpha } |x'|^\alpha. 
\end{equation}
Recall that $F_{1} (x')$ and $F_{2} (x')$  are defined in \eqref{eq:F1xi}. Assume that $F_{1} (x')
	\in C^\alpha(\overline S_h)$ and $F_2(x')
	\in C^\alpha(\overline S_h)$ ($0<\alpha <1$) satisfying
	\begin{align}\label{eq:326F3j}
\begin{split}
	F_{1}(x')&= F_{1} (0)+\delta  F_{1 } (x'),\quad |\delta F_{1 } (x')  | \leq \| F_{1} \|_{C^\alpha } |x'|^\alpha,\\
	F_{2}(x')&=F_{2} (0)+\delta F_{2 }(x'),\quad |\delta F_{2} (x')  | \leq \|F_{2} \|_{C^\alpha } |x'|^\alpha,
	\end{split}
\end{align}
then there holds that
\begin{align}\label{3eq:int identy}
\begin{split}
	&(F_{1 } (0)+F_{2 } (0)+F_{3j } (0)) \int_{S_h} u_0(sx') {\rm d} x'+\delta_j (s) = I_3 -  I_2^\pm - \epsilon_j^\pm(s)\\
	&\quad -\int_{S_h}  \delta  F_{1} (x') u_0(sx') {\rm d} x'  -\int_{S_h} \delta  F_{2} (x')u_0(sx') {\rm d} x' -\int_{S_h} \delta  F_{3j} (x')u_0(sx') {\rm d} x' .  
	\end{split}
\end{align}
where
\begin{align}\label{3eq:intnotation} 
\begin{split}
	 I_2^\pm& =\int_{\Gamma_{h } ^\pm } \eta(x')  u_0(sx') {\mathcal R} (v_j)  (x')  {\rm d} \sigma ,\\ 
	 \delta_j(s)&=-k^2 \int_{S_h} (   {\mathcal R} (v)(x')  -{\mathcal R} (v_j)(x')  )u_0(sx')  {\rm d} x', \\
	  \epsilon_{j}^\pm(s)&=  \int_{\Gamma_{h } ^\pm } \eta(x')  u_0(sx') {\mathcal R}   (v(x',x_3)- v_j (x',x_3) )   {\rm d} \sigma,
\end{split}
	\end{align}
	and $I_3$ is defined in \eqref{eq:36 int}. 
Furthermore,  assuming that the transmission eigenfunction $v$ can be approximated in $H^1(S_h \times (-M,M))$ by the Herglotz wave functions $v_j$ defined in \eqref{eq:herg3}, $j=1,2,\ldots$,  with kernels $g_j$ satisfying 
\begin{equation}\label{3eq:ass0}
\|v-v_j\|_{H^1(S_h \times (-M,M))} \leq j^{-1-\Upsilon},\quad  \|g_j\|_{L^2({\mathbb S}^{2})} \leq C j^{{1+\varrho}}, 
\end{equation}
for some positive constants $C, \Upsilon$ and ${\varrho}$, then there hold that
\begin{subequations}
\begin{align}
		\left|\int_{S_h}  \delta F_{3j} (x) u_0(sx') {\rm d} x' \right | & \leq \frac{8  L\sqrt{\pi}\|\psi\|_{L^\infty}(\theta_M- \theta_m) \Gamma(2 \alpha+4) }{ \delta_W^{2\alpha+4   }  } k^2 {\rm diam}(S_h)^{1-\alpha }\nonumber \\
	& \quad \times (1+k) \|g_j\|_{L^2( {\mathbb S}^{2})}   s^{-\alpha-2   }, \label{3eq:deltaf1j}\\
	\label{3eq:deltaf1}
	\left|\int_{S_h}  \delta F_{1 } (x) u_0(sx') {\rm d} x' \right | &\leq  \frac{2\|F_{1}\|_{C^\alpha } (\theta_M-\theta_m )\Gamma(2\alpha+4) }{ \delta_W^{2\alpha+4}} s^{-\alpha-2}  , \\
		\left|\int_{S_h}  \delta F_{2} (x) u_0(sx') {\rm d} x' \right | &\leq \frac{2\|F_{2 }\|_{C^\alpha } (\theta_M-\theta_m )\Gamma(2\alpha+4) }{ \delta_W^{2\alpha+4}} s^{-\alpha-2}, 	\label{3eq:deltaf2}
\end{align}	
\end{subequations}
where $\Theta \in [0,h]$ and $\delta_W$ is defined in \eqref{eq:xalpha}, as $s\rightarrow +\infty $. If $v-w\in H^2(S_h \times (-M,M) )$, then one has
\begin{align}\label{3eq:I3}
\left| I_3 \right| &\leq  C e^{-c' \sqrt s}\ \ \mbox{as}\ \ s \rightarrow + \infty,
\end{align} 
 where $C>0$ and $c'>0$ are constants.
 \end{lemma}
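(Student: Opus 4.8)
The plan is to prove Lemma~\ref{lem:int2} as the three-dimensional counterpart of Lemma~\ref{lem:int1}, obtained by reducing the problem to the plane via the operator $\mathcal{R}$ and then mimicking, term by term, the two-dimensional argument. The starting point is the integral identity \eqref{eq:36 int} supplied by Lemma~\ref{lem:32}, namely
\[
\int_{S_h}(F_1(x')+F_2(x')+F_3(x'))u_0(sx')\,{\rm d} x'=I_3-\int_{\Gamma_h^\pm}\eta(x')\mathcal{R}(v)(x')u_0(sx')\,{\rm d}\sigma,
\]
together with the fact, also established there, that $F_1,F_2\in C^\alpha(\overline S_h)$ under the stated regularity on $qw$ and $v-w$.

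First I would record the regularity and Hölder splitting \eqref{eq:F3j} of $F_{3j}=-k^2\mathcal{R}(v_j)$. Since the $3$D Herglotz function $v_j$ in \eqref{eq:herg3} is an entire solution of the Helmholtz equation, the partial integration $\mathcal{R}(v_j)(x')=\int\psi(x_3)v_j(x',x_3)\,{\rm d} x_3$ produces a smooth function of $x'$, so $F_{3j}\in C^\infty(\overline S_h)\subset C^\alpha(\overline S_h)$ and \eqref{eq:F3j} is immediate. Next, writing $F_3=F_{3j}+(F_3-F_{3j})$ and $\mathcal{R}(v)=\mathcal{R}(v_j)+\mathcal{R}(v-v_j)$, the interior contribution of $F_3-F_{3j}=-k^2\mathcal{R}(v-v_j)$ becomes $\delta_j(s)$ and the corresponding boundary contribution becomes $\epsilon_j^\pm(s)$, precisely the quantities defined in \eqref{3eq:intnotation}, while the remaining boundary term is $I_2^\pm$. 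Substituting then the expansions \eqref{eq:F3j} and \eqref{eq:326F3j} of $F_{3j},F_1,F_2$ and separating the vertex values (which multiply $\int_{S_h}u_0(sx')\,{\rm d} x'$) from the remainders $\delta F_1,\delta F_2,\delta F_{3j}$ yields exactly \eqref{3eq:int identy}.

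The three estimates \eqref{3eq:deltaf1j}--\eqref{3eq:deltaf2} all rest on the single bound \eqref{eq:xalpha} for $|u_0(sx')|$ tested against $|x'|^\alpha$, combined with the relevant $C^\alpha$ norm. For \eqref{3eq:deltaf1} and \eqref{3eq:deltaf2} this is a verbatim repetition of the derivation of \eqref{eq:deltaf2} in Lemma~\ref{lem:int1}, with $\|F_1\|_{C^\alpha}$, $\|F_2\|_{C^\alpha}$ in place of $\|f_2\|_{C^\alpha}$. For \eqref{3eq:deltaf1j} the extra step is to control $\|F_{3j}\|_{C^\alpha}=k^2\|\mathcal{R}(v_j)\|_{C^\alpha}$: by the compact embedding $C^1(\overline S_h)\hookrightarrow C^\alpha(\overline S_h)$ one reduces to $\|\mathcal{R}(v_j)\|_{C^1}\le 2L\|\psi\|_{L^\infty}\|v_j\|_{C^1}$, since $\mathcal{R}$ integrates out $x_3$ over an interval of length $2L$ against $\psi$, and $\|v_j\|_{C^1}\le 2\sqrt{\pi}(1+k)\|g_j\|_{L^2(\mathbb{S}^2)}$ follows by differentiating the representation \eqref{eq:herg3} (the factor $2\sqrt{\pi}=\sqrt{|\mathbb{S}^2|}$ replacing the $\sqrt{2\pi}$ of the planar case). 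Multiplying by the prefactor $2$ in \eqref{eq:xalpha}, the constants assemble into the announced factor $8L\sqrt{\pi}\|\psi\|_{L^\infty}$ of \eqref{3eq:deltaf1j}. Finally \eqref{3eq:I3} follows as in the proof of \eqref{eq:I3}: on $\Lambda_h$ both $u_0(sx')$ and $\partial_\nu u_0(sx')$ decay like $e^{-\delta_W\sqrt{sh}}$, and since Lemma~\ref{lem h2} guarantees $\mathcal{R}(v-w)\in H^2(S_h)$ whenever $v-w\in H^2(S_h\times(-M,M))$, the Cauchy--Schwarz inequality and the trace theorem deliver the bound $Ce^{-c'\sqrt s}$.

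I expect the main obstacle to lie in the bookkeeping of the dimension reduction behind \eqref{3eq:deltaf1j}: one must justify differentiating under the integral sign defining $\mathcal{R}(v_j)$, track how the support length $2L$ of $\psi$ and the measure $|\mathbb{S}^2|=4\pi$ enter, and confirm that the constants combine to give exactly $8L\sqrt{\pi}\|\psi\|_{L^\infty}$. A secondary point is to check that the distributional identities of Lemma~\ref{lem:32} survive the Herglotz splitting, so that $\delta_j(s)$ and $\epsilon_j^\pm(s)$ are genuine error terms governed by the approximation rate \eqref{3eq:ass0}; everything else is a faithful transcription of the two-dimensional computation in Lemma~\ref{lem:int1}.
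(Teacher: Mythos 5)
Your proposal is correct and follows essentially the same route as the paper's own proof: starting from the identity \eqref{eq:36 int} of Lemma \ref{lem:32}, splitting $F_3$ and $\mathcal{R}(v)$ via the Herglotz approximation to produce $\delta_j(s)$, $\epsilon_j^\pm(s)$ and $I_2^\pm$, inserting the H\"older expansions to reach \eqref{3eq:int identy}, and then bounding the remainder integrals through \eqref{eq:xalpha} together with $\|\mathcal{R}(v_j)\|_{C^1}\leq 4L\sqrt{\pi}\|\psi\|_{L^\infty}(1+k)\|g_j\|_{L^2(\mathbb{S}^2)}$, whose constants assemble exactly into the factor $8L\sqrt{\pi}\|\psi\|_{L^\infty}$ of \eqref{3eq:deltaf1j}, and finally treating $I_3$ via Lemma \ref{lem h2} and the exponential decay of $u_0$ on $\Lambda_h$. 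The only cosmetic difference is that you deduce $F_{3j}\in C^\alpha(\overline{S}_h)$ from the smoothness of the entire function $v_j$, whereas the paper routes this through $\mathcal{R}(v_j)\in H^2(S_h)\hookrightarrow C^\alpha(\overline{S}_h)$; both are valid.
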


 \begin{proof} It is clear that the Herglotz wave functions $v_j \in H^2 (S_h\times (-M,M))$. From Lemma  \ref{lem h2}, we have  ${\mathcal R}(v_j)(x')  \in H^2(S_h)$,  which can be embedded into $C^\alpha( \overline{ S}_h) $ satisfying \eqref{eq:F3j}.  
 
 Since   $v \in H^1(S_h \times (-M,M))$ is a solution to  the Helmholtz equation in $S_h \times (-M,M)$, from Lemma \ref{lem:Herg}, $v$ can be approximated by the Herglotz wave functions $v_j(x) $ given in \eqref{eq:herg3} in the $H^1$-topology. Therefore, we deduce that
\begin{align}\label{eq:313delta}
	\int_{S_h} -k^2 {\mathcal R} (v)(x') u_0(sx')  {\rm d} x'= \int_{S_h }  -k^2 {\mathcal R} (v_j)(x') u_0(sx')  {\rm d} x'+ \delta_j(s),
	\end{align}
where 
$
\delta_j(s)
$
is defined in \eqref{3eq:intnotation}.

 Let
 	$$
 	I_1= \int_{S_h} u_0(sx') (F_{1} (x')+F_{2} (x')+F_{3j} (x')) {\rm d} x'.
 	$$
 Substituting \eqref{eq:F3j} and \eqref{eq:326F3j} into $I_1$, it yields that
\begin{align*}
I_1 &=(F_{1 } (0)+F_{2 } (0)+F_{3j } (0)) \int_{S_h} u_0(sx') {\rm d} x'+\int_{S_h}  \delta  F_{1} (x') u_0(sx') {\rm d} x'\\
&\quad +\int_{S_h} \delta  F_{2} (x')u_0(sx') {\rm d} x' +\int_{S_h} \delta  F_{3j} (x')u_0(sx') {\rm d} x'. 
\end{align*} 
 Let $F_3(x')$ be defined in \eqref{eq:F1xi}. By virtue of \eqref{eq:313delta}  we have
 \begin{align}
 		\int_{S_h   }  ( F_{1} (x')+F_{2} (x')+F_{3} (x') ) u_0(sx')  {\rm d} x'&=(F_{1 } (0)+F_{2 } (0)+F_{3j } (0)) \int_{S_h} u_0(sx') {\rm d} x'\notag \\
 		&+\int_{S_h}  \delta  F_{1} (x') u_0(sx') {\rm d} x' +\int_{S_h} \delta  F_{2} (x')u_0(sx') {\rm d} x' \notag \\
&+\int_{S_h} \delta  F_{3j} (x')u_0(sx') {\rm d} x' +  \delta_j(s) . \label{eq:320 I1}
 \end{align}

 Recall that $v$ can be approximated by the Herglotz wave functions $v_j$ given in \eqref{eq:herg3} in the $H^1$-norm. Then
\begin{align}\label{3eq:int3}
	\int_{\Gamma_{h } ^\pm } \eta(x')  u_0(sx') {\mathcal R}(v) (x') {\rm d} \sigma&=\int_{\Gamma_{h } ^\pm } \eta(x')  u_0(sx') {\mathcal R}(v_j) (x')  {\rm d} \sigma + \epsilon_{j}^\pm(s), \\
	 \epsilon_{j}^\pm(s)&=  \int_{\Gamma_{h } ^\pm } \eta(x')  u_0(sx') {\mathcal R}   (v(x',x_3)- v_j (x',x_3) )   {\rm d} \sigma. \nonumber
\end{align}

Plugging \eqref{eq:320 I1} and \eqref{3eq:int3} into \eqref{eq:36 int} in Lemma \ref{lem:32}, we can obtain \eqref{3eq:int identy}.

Recall that $F_{3j} (x')=-k^2  {\mathcal R} (v_j)(x')$.  Using the property of compact embedding of H{\"o}lder spaces, we can derive that for $0< \alpha<1$, 
$$
\|  F_{3j}  \|_{C^\alpha } \leq k^2 {\rm diam}(S_h)^{1-\alpha } \|{\mathcal R}(v_j) \|_{C^1},
$$
where $ {\rm diam}(S_h)$ is the diameter of $S_h$. By the definition of  the dimension reduction operator \eqref{dim redu op}, it is easy to see that
\begin{align*}
	|{\mathcal R}(v_j) (x')| \leq 4 L\sqrt{\pi }\|\psi \|_{L^\infty}\|g_j\|_{L^2({\mathbb S}^{2})},\quad |\partial_{x'}{\mathcal R}(v_j) (x')| \leq 4  k L\sqrt{\pi }\|\psi \|_{L^\infty}\|g_j\|_{L^2({\mathbb S}^{2})}.
\end{align*}
Thus we have
$$
\|{\mathcal R}( v_j) \|_{C^1} \leq 4 L\sqrt{\pi}\|\psi\|_{L^\infty}(1+k) \|g_j\|_{L^2( {\mathbb S}^{2})}.
$$
Therefore  from \eqref{eq:xalpha} in Lemma \ref{lem:1} we have \eqref{3eq:deltaf1j}. 
Using similar arguments, we can deduce \eqref{3eq:deltaf1} and \eqref{3eq:deltaf2}. 

Since $v-w \in H^2(S_{h}  \times (-M,M))$, which   implies that    ${\mathcal R}  (v-w) \in H^2(S_{h} )$ by using Lemma \ref{lem h2}, from \eqref{eq:I3} in Lemma \ref{lem:int1}, we can prove \eqref{3eq:I3}. 

The proof is complete. 
 \end{proof}
 
 \begin{lemma}\label{lem:3 delta}
	Under the same setup in Lemma \ref{lem:int2}, we assume that the transmission eigenfunction $v$ to \eqref{eq:3d in eig} can be approximated by a sequence of the Herglotz wave functions $\{v_j\}_{j=1}^{+\infty} $ with the form \eqref{eq:herg3} in $H^1(S_h \times (-M,M))$  satisfying \begin{equation}\label{3eq:ass1}
		\|v-v_j\|_{H^1(S_h \times (-M,M))} \leq j^{-1-\Upsilon},\quad  \|g_j\|_{L^2({\mathbb S}^{2})} \leq C j^{{1+\varrho}}, 
	\end{equation}
	for some positive constant $C$, $\Upsilon>0$ and ${\varrho>0}   $. Let $\delta_j(s)$ and $\epsilon_j^\pm(s)$ be defined in \eqref{3eq:intnotation}.  Then we have the following estimate,
		\begin{align}
			|\delta_j(s)| & \leq \frac{ k^2 \|\psi \|_{L^\infty}  \sqrt{C(L,h)  ( \theta_M-\theta_m )}  e^{-\sqrt{s \Theta } \delta_W } h  }{\sqrt 2 } j^{-1-  \Upsilon}. \label{3eq:deltajnew3}
		\end{align}
Furthermore, assuming that the boundary parameter $\eta$ in \eqref{eq:3d in eig} fulfills $\eta ( x) \in C^{\alpha}(\overline{\Gamma_h^\pm } \times [-M,M] )$, which indicates that $\eta(x')\in C^\alpha(\overline{\Gamma_{ h}^\pm})$ and has the expansion  
\begin{equation}\label{3eq:eta}
	\eta(x')=\eta(0)+\delta \eta(x'),\quad |\delta \eta(x')| \leq \|\eta \|_{C^\alpha  } |x'|^\alpha,
\end{equation}
then under the assumption \eqref{3eq:ass1}, we have
\begin{align}
	|\epsilon_j^\pm (s) | & \leq C {\bl  \| \psi \|_{L^\infty} } \left( |\eta(0)|  \frac{\sqrt { \theta_M-\theta_m }  e^{-\sqrt{s \Theta } \delta_W } h } {\sqrt 2 } \right. \nonumber\\
	&\left. \quad + \|\eta\|_{C^\alpha } s^{-(\alpha+1 )} \frac{\sqrt{2(\theta_M-\theta_m) \Gamma(4\alpha+4) } }{(2\delta_W)^{2\alpha+2  } } \right) j^{-1-\Upsilon}.  \label{3eq:xij} 
\end{align}
	\end{lemma}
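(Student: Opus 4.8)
The plan is to mirror the two-dimensional argument of Lemma~\ref{lem:27}, the only genuinely new ingredient being a quantitative mapping bound for the dimension reduction operator $\mathcal R$. Assuming without loss of generality that $x_3^c=0$, so that $\mathcal R(g)(x')=\int_{-L}^{L}\psi(x_3)g(x',x_3)\,\rmd x_3$, I would first record that, by the Cauchy--Schwarz inequality in the $x_3$ variable,
\[
|\mathcal R(g)(x')|\le \sqrt{2L}\,\|\psi\|_{L^\infty}\Big(\int_{-L}^{L}|g(x',x_3)|^2\,\rmd x_3\Big)^{1/2},
\]
and, since $\psi$ depends only on $x_3$, the same bound holds with $g$ replaced by $\nabla_{x'}g$. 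Integrating over $S_h$ then yields
\[
\|\mathcal R(g)\|_{L^2(S_h)}\le \sqrt{2L}\,\|\psi\|_{L^\infty}\|g\|_{L^2(S_h\times(-M,M))},\quad
\|\mathcal R(g)\|_{H^1(S_h)}\le \sqrt{2L}\,\|\psi\|_{L^\infty}\|g\|_{H^1(S_h\times(-M,M))}.
\]
Absorbing $\sqrt{2L}$ together with the geometric factors into a single constant $C(L,h)$ is what produces the constants displayed in \eqref{3eq:deltajnew3} and \eqref{3eq:xij}.

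For \eqref{3eq:deltajnew3}, I would apply Cauchy--Schwarz on $S_h$ to the definition of $\delta_j(s)$ in \eqref{3eq:intnotation}, giving
\[
|\delta_j(s)|\le k^2\,\|\mathcal R(v-v_j)\|_{L^2(S_h)}\,\|u_0(sx')\|_{L^2(S_h)}.
\]
The first factor is controlled by the $L^2$ mapping bound above together with the approximation rate \eqref{3eq:ass1}, while the second factor is controlled by the exponential estimate \eqref{eq:u0L2} of Lemma~\ref{lem:23}, namely $\|u_0(sx')\|_{L^2(S_h)}\le \sqrt{\theta_M-\theta_m}\,e^{-\sqrt{s\Theta}\delta_W}h/\sqrt 2$. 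Multiplying these two bounds yields \eqref{3eq:deltajnew3}.

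For \eqref{3eq:xij}, I would insert the H\"older expansion \eqref{3eq:eta} of $\eta$ and split $\epsilon_j^\pm(s)$ into an $\eta(0)$-term and a $\delta\eta$-term. Each term is then estimated by the trace theorem and Cauchy--Schwarz exactly as in Lemma~\ref{lem:27}, bounding $\|\mathcal R(v-v_j)\|_{H^{1/2}(\Gamma_h^\pm)}$ by $\|\mathcal R(v-v_j)\|_{H^1(S_h)}$ and hence by $C(L,h)\|\psi\|_{L^\infty}j^{-1-\Upsilon}$ via the $H^1$ mapping bound and \eqref{3eq:ass1}. The accompanying $u_0$-factors are handled by \eqref{eq:u0L2} for the $\eta(0)$-term and by the weighted estimate \eqref{eq:22}, i.e. $\||x'|^\alpha u_0(sx')\|_{L^2(S_h)}\le s^{-(\alpha+1)}\sqrt{2(\theta_M-\theta_m)\Gamma(4\alpha+4)}/(2\delta_W)^{2\alpha+2}$, for the $\delta\eta$-term; this reproduces the two summands in \eqref{3eq:xij} with the extra factor $\|\psi\|_{L^\infty}$.

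The only step requiring care is the $H^1$ mapping bound for $\mathcal R$: one must verify that differentiation in $x'$ commutes with the $x_3$-integration defining $\mathcal R$, which is justified by dominated convergence since $\psi\in C_0^\infty$ is bounded and $v-v_j\in H^1$, so that the gradient estimate follows from the same Cauchy--Schwarz computation. This is the quantitative counterpart of the qualitative statement already recorded in Lemma~\ref{lem h2}. Once it is in place, the remainder is essentially a line-by-line transcription of the proof of Lemma~\ref{lem:27}, with $v$ and $v_j$ replaced by $\mathcal R(v)$ and $\mathcal R(v_j)$ and the constant $\|\psi\|_{L^\infty}$ carried throughout.
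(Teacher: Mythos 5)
Your proposal is correct and follows essentially the same route as the paper's own proof: Cauchy--Schwarz applied to $\delta_j(s)$ with the $L^2$ bound on $\mathcal R(v-v_j)$ and the estimate \eqref{eq:u0L2}, then the splitting of $\epsilon_j^\pm(s)$ via \eqref{3eq:eta}, the trace theorem, and the bounds \eqref{eq:u0L2} and \eqref{eq:22}, exactly mirroring Lemma~\ref{lem:27}. The only cosmetic difference is that you make the quantitative $L^2$/$H^1$ mapping bound for $\mathcal R$ explicit (with constant $\sqrt{2L}\,\|\psi\|_{L^\infty}$), whereas the paper records it as the inequality \eqref{eq:312} together with a citation of Lemma~\ref{lem h2}.
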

	
	\begin{proof}
	We first prove \eqref{3eq:deltajnew3}. Indeed, by using the Cauchy-Schwarz inequality,  we have
\begin{align}
	\|{\mathcal R}(  v)-{\mathcal R}(  v_j) \|_{L^2(S_h)}^2&=\int_{S_h } \left| \int_{-L}^L  \psi(x_3) (v(x',x_3)-v_j(x',x_3))  {\rm d} x_3 \right|^2 {\rm d} x'\nonumber \\
	&\leq C(L,h) \|\psi\|^2_{L^\infty} \|v-v_j\|_{L^2(S_h \times (-L,L))}^2, \label{eq:312}
\end{align}
where  $C(L,h) $ is a positive constant depending on $L$ and $h$. Since the $L^2$-norm of $u_0$ in $S_h$ can be estimated by \eqref{eq:u0L2} in Lemma \ref{lem:23}, recalling that $\delta_j(s)$ is defined in \eqref{3eq:intnotation}, and using Cauchy-Schwarz inequality again,  by virtue of \eqref{3eq:ass1},  
we can deduce \eqref{3eq:deltajnew3}. 

  Note that $\epsilon_{j}^\pm(s)$ is defined in \eqref{3eq:intnotation} and $\eta$ has the expansion \eqref{3eq:eta}.  Using the Cauchy-Schwarz inequality and the trace theorem, we have
\begin{align}\label{3eq:19}
	|\epsilon_{j}^\pm(s)|& \leq |\eta(0)|   \int_{\Gamma_{h } ^\pm } | u_0(sx') | | {\mathcal R}   (v(x',x_3)- v_j (x',x_3) ) |   {\rm d} \sigma \\
	&\quad + \|\eta\|_{C^\alpha }   \int_{\Gamma_{h } ^\pm }|x'|^\alpha  | u_0(sx') | | {\mathcal R}   (v(x',x_3)- v_j (x',x_3) ) |   {\rm d} \sigma \notag \\
	&\leq |\eta(0)|   \|{\mathcal R}  (v-v_j) \|_{H^{1/2}(\Gamma_h^\pm ) } \| u_0(sx')\|_{H^{-1/2}(\Gamma_h^\pm ) }  \notag \\
	&\quad + \|\eta\|_{C^\alpha }   \|{\mathcal R}  (v-v_j) \|_{H^{1/2}(\Gamma_h^\pm ) } \| {|x'|}^\alpha u_0(sx')\|_{H^{-1/2}(\Gamma_h^\pm ) } \notag \\
	&\leq |\eta(0)|   \|{\mathcal R}  (v-v_j) \|_{H^{1}(S_h ) } \| u_0(sx')\|_{L^2 (S_h) } \notag \\
	&\quad + \|\eta\|_{C^\alpha }   \|{\mathcal R}  (v-v_j) \|_{H^{1}(S_h)  } \| {|x'|}^\alpha u_0(sx')\|_{L^2 (S_h ) } \notag \\
		&\leq  C \| \psi \|_{L^\infty} \|v-v_j \|_{H^{1}(S_h \times (-L,L)) }(|\eta(0)| \| u_0(sx')\|_{L^{2}(S_h ) } + \|\eta\|_{C^\alpha }\| {|x'|}^\alpha u_0(sx')\|_{L^{2}(S_h ) } ),  \nonumber
\end{align}
where $C$ is a positive constant and the last inequality comes from Lemma \ref{lem h2}.   Substituting   \eqref{eq:u0L2},  \eqref{eq:22}  and \eqref{3eq:ass1} into \eqref{3eq:19}, we obtain \eqref{3eq:xij}. 
	\end{proof}

\begin{lemma}
	Let $j_\ell (t)$ be the $\ell$-th spherical Bessel function with the form
	  \begin{equation}\label{eq:bess sph}
	 	 j_\ell (t)=\frac{t^\ell }{ (2\ell+1)!!}\left  (1-\sum_{l=1}^\infty  \frac{(-1)^l t^{2l }}{ 2^l l! N_{\ell,l} }\right  ),
	 \end{equation}
	 where $N_{\ell,l}=(2\ell+3)\cdots  (2\ell+2l+1)$ and $\mathcal R$ be the dimension reduction operator defined in \eqref{dim redu op}. Then
	 \begin{subequations}
	 	\begin{align}
	 		{\mathcal R} (j_0)(x')	&=C(\psi )\Bigg[ 1-   \sum_{l=1}^\infty  \frac{(-1)^l k^{2l} } { 2^l l! (2l+1)!! }  \left(|x'|^2+a_{0,l} ^2\right )^l\Bigg],\label{eq:j0} \\
	 		{\mathcal R} (j_\ell)(x')	&={ \frac{k^\ell   (|x'|^2+a_{\ell }^2)^{(\ell-1) /2 } }{ (2\ell+1)!!} \Bigg[ 1-  \sum_{l=1}^\infty  \frac{(-1)^l k^{2l} (|x'|^2+a_{\ell,l }^2)^{l }}{ 2^l l! N_{\ell,l}  }   \Bigg] C_1(\psi )|x'|^2  }, \label{eq:jell}
	 	\end{align}
	 \end{subequations}
	 where $j_0=j_0(k|x|)$, $j_\ell=j_\ell(k|x|)$,  $\ell \in  \mathbb N  $, $a_{0,l} \in [-L,L]$, $a_\ell, \, a_{\ell,l} \in [-L,L]$, and
	 \begin{equation}\label{eq:329 cpsi}
	 	C(\psi ) = \int_{-L}^L \psi(x_3) {\rm d} x_3, \quad C_1(\psi )= \int_{-\arctan L/|x'|}^{\arctan L/|x'|}   \psi(|x'| \tan \varpi ) \sec^3 \varpi  {\rm d}   \varpi.
	 \end{equation}
	 Furthermore, there holds that
	 \begin{equation}\label{eq:C1psi}
	0<C_1(\psi) < 2^{5/2}  \| \psi \|_{L^\infty}  \arctan L . 
\end{equation}
\end{lemma}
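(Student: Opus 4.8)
The plan is to substitute the small-argument series \eqref{eq:bess sph} for the spherical Bessel functions, evaluated at $t=k|x|=k\sqrt{|x'|^2+x_3^2}$, into the definition \eqref{dim redu op} of $\mathcal R$ (normalized so that $x_3^c=0$, hence $\mathcal R(g)(x')=\int_{-L}^{L}\psi(x_3)g(x',x_3)\,\mathrm{d}x_3$), and then integrate termwise in $x_3$. Termwise integration is legitimate since $j_\ell(t)/t^\ell$ is entire, so the series converges absolutely and uniformly on the compact range $|x|\le\sqrt{h^2+L^2}$ while $\psi$ is bounded. The decisive tool throughout is the first (weighted) mean value theorem for integrals, which applies precisely because $\psi\ge0$ by hypothesis: for continuous $\phi$ one has $\int_{-L}^{L}\psi(x_3)\phi(x_3)\,\mathrm{d}x_3=\phi(x_3^\ast)\int_{-L}^{L}\psi\,\mathrm{d}x_3$ for some $x_3^\ast\in[-L,L]$. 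For $\ell=0$ this finishes matters directly: since $N_{0,l}=(2l+1)!!$, one has $j_0(k|x|)=1-\sum_{l\ge1}\frac{(-1)^lk^{2l}}{2^ll!(2l+1)!!}(|x'|^2+x_3^2)^l$, the constant term integrates to $C(\psi)=\int_{-L}^{L}\psi$, and the weighted mean value theorem applied to the continuous factor $(|x'|^2+x_3^2)^l$ turns each remaining integral into $(|x'|^2+a_{0,l}^2)^lC(\psi)$ with $a_{0,l}:=x_3^\ast\in[-L,L]$, which is exactly \eqref{eq:j0}.

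The case $\ell\ge1$ is the crux, because of the fractional prefactor $(\ell-1)/2$. Here I would change variables $x_3=|x'|\tan\varpi$, under which $\sqrt{|x'|^2+x_3^2}=|x'|\sec\varpi$ and $\mathrm{d}x_3=|x'|\sec^2\varpi\,\mathrm{d}\varpi$; factoring out $\tfrac{k^\ell|x'|^2}{(2\ell+1)!!}$ and splitting $\sec^{\ell+2}\varpi=\sec^{\ell-1}\varpi\cdot\sec^3\varpi$ rewrites the integral as $\frac{k^\ell|x'|^2}{(2\ell+1)!!}\int w(\varpi)\,(|x'|^2+x_3^2)^{(\ell-1)/2}B(\varpi)\,\mathrm{d}\varpi$, where $w(\varpi)=\psi(|x'|\tan\varpi)\sec^3\varpi\ge0$ is precisely the weight defining $C_1(\psi)$ in \eqref{eq:329 cpsi} and $B(\varpi)=1-\sum_{l\ge1}\frac{(-1)^lk^{2l}(|x'|^2+x_3^2)^l}{2^ll!N_{\ell,l}}$ is the residual series. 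Under the standing smallness (in particular $kh<1$ with $L$ small), the argument $k|x|$ stays below the first positive zero of $j_\ell$, so $B>0$ and $wB\ge0$ is again an admissible weight. I would then apply the weighted mean value theorem twice: first with weight $wB$ to extract the prefactor, $(|x'|^2+x_3^2)^{(\ell-1)/2}\mapsto(|x'|^2+a_\ell^2)^{(\ell-1)/2}$, leaving $\int wB\,\mathrm{d}\varpi$; and then, writing $\int wB\,\mathrm{d}\varpi=C_1(\psi)-\sum_l\frac{(-1)^lk^{2l}}{2^ll!N_{\ell,l}}\int w\,(|x'|^2+x_3^2)^l\,\mathrm{d}\varpi$, once more termwise with weight $w$ to convert each $(|x'|^2+x_3^2)^l$ into $(|x'|^2+a_{\ell,l}^2)^lC_1(\psi)$. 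Combining the two stages gives \eqref{eq:jell}.

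For \eqref{eq:C1psi}, strict positivity $C_1(\psi)>0$ is immediate, as $w=\psi\sec^3$ is nonnegative, continuous and not identically zero ($\psi\not\equiv0$), while the upper bound follows by estimating $\psi\le\|\psi\|_{L^\infty}$ together with a direct bound on $\sec^3\varpi$ and on the length $2\arctan(L/|x'|)$ of the $\varpi$-interval. I expect the main obstacle to be exactly the $\ell\ge1$ step: one must arrange the single $x_3$-integral so that the \emph{same} prefactor point $a_\ell$ is shared by every term of the residual series, while each series coefficient simultaneously picks up its own point $a_{\ell,l}$. This is what forces the two-stage mean value argument and, with it, the positivity check $B>0$ — without the latter, $wB$ would fail to be a legitimate (nonnegative) weight, and a naive termwise estimate would only produce a single combined exponent $(|x'|^2+c_l^2)^{(\ell-1)/2+l}$ that does not split into the common-$a_\ell$ product form demanded by \eqref{eq:jell}.
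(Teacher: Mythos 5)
Your computations do arrive at the stated formulas, and both your $\ell=0$ case and your treatment of \eqref{eq:C1psi} coincide with the paper's. The genuine problem is your first mean value step for $\ell\ge 1$: you take $wB$ as the weight, where $w(\varpi)=\psi(|x'|\tan\varpi)\sec^3\varpi$ and $B=1-\sum_{l\ge1}\frac{(-1)^lk^{2l}(|x'|^2+x_3^2)^{l}}{2^l l!\,N_{\ell,l}}$, and the first mean value theorem for integrals requires the weight to be of one sign. You secure $B>0$ only by importing ``standing smallness'' ($kh<1$, $L$ small), but this lemma carries no such hypothesis: it is stated for arbitrary $k$, $h$, $L$, and the paper's own proof is unconditional. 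Note that $B=(2\ell+1)!!\,j_\ell(k|x|)/(k|x|)^{\ell}$ genuinely changes sign once $k|x|$ passes the first zero of $j_\ell$, so without the extra assumption your step does not merely lack justification -- it fails. As written, your argument proves only a conditional version of the statement.

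The gap is avoidable by reversing the order of the two mean value applications, which is exactly what the paper does. Take the nonnegative weight to be $\psi(x_3)(|x'|^2+x_3^2)^{\ell/2}$ (equivalently $w(\varpi)(|x'|^2+x_3^2)^{(\ell-1)/2}$ after your substitution), and apply the mean value theorem to the continuous factor: either to the whole bracket $B$ at once, or termwise to $(|x'|^2+x_3^2)^{l}$, which gives $(|x'|^2+a_{\ell,l}^2)^{l}\int_{-L}^{L}\psi(x_3)(|x'|^2+x_3^2)^{\ell/2}\,\mathrm{d}x_3$ for each $l$; crucially, the \emph{same} remaining integral appears in every term. That common integral is then evaluated once via the paper's identity \eqref{eq:mean} (weight $\psi(|x'|^2+x_3^2)^{1/2}$, continuous function $(|x'|^2+x_3^2)^{(\ell-1)/2}$, followed by the tangent substitution), producing the shared factor $C_1(\psi)|x'|^2(|x'|^2+a_\ell^2)^{(\ell-1)/2}$ and hence \eqref{eq:jell}, with no sign information on $B$ needed anywhere. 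Your closing claim -- that a termwise argument can only produce the unsplit exponent $(|x'|^2+c_l^2)^{(\ell-1)/2+l}$ -- is therefore mistaken: that happens only if one uses $\psi$ (or $w$) alone as the weight; with $\psi(x_3)(|x'|^2+x_3^2)^{\ell/2}$ as the weight, the split form with a common prefactor point $a_\ell$ comes out directly.
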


\begin{proof}
	 From the definition of the dimension reduction operator \eqref{dim redu op} and the integral mean value theorem, we know that
\begin{align}
{\mathcal R} (j_0)(x')	&= \int_{-L}^L  \psi(x_3) j_0(k |x|) {\rm d} x_3 \notag  \\
&= \int_{-L}^L  \psi(x_3) {\rm d} x_3
-  \sum_{l=1}^\infty  \frac{(-1)^l k^{2l} } {2^l l! (2l+1)!! } \int_{-L}^L \psi(x_3)\left  (|x'|^2+x_3^2\right )^l {\rm d} x_3, \nonumber
\end{align}
from which we obtain \eqref{eq:j0}. 

For ${\mathcal R}(j_{\ell})(x')=\int_{-L}^L  \psi(x_3) j_\ell (k |x|) {\rm d} x_3$,  {using the integral mean value theorem, we can deduce that for $\ell\in\mathbb{N}$},
\begin{align}\label{eq:mean}
	\int_{-L}^L  \psi(x_3) (|x'|^2+x_3^2)^{\ell/2 }{\rm d} x_3&= (|x'|^2+a_\ell^2)^{(\ell-1)/2 }\int_{-L}^L  \psi(x_3) (|x'|^2+x_3^2)^{1/2 }{\rm d}  x_3\\
	&=C_1(\psi )|x'|^2  (|x'|^2+a_\ell^2)^{(\ell-1)/2 }, \nonumber
		\end{align} 
where $a_\ell \in [-L,L]$. Thus for $\ell\in\mathbb{N}$, from \eqref{eq:bess sph}, we have 
\begin{align}
{\mathcal R} (j_\ell)(x')	&=\int_{-L}^L \psi(x_3) j_\ell (k |x|) {\rm d} x_3 \notag \\
&= \frac{k^\ell  }{ (2\ell+1)!!}  \int_{-L}^L  \psi(x_3) (|x'|^2+x_3^2)^{\ell/2 }\left  (1-\sum_{l=1}^\infty  \frac{(-1)^l k^{2l } (|x'|^2+x_3^2)^{l }}{ 2^l l! N_{\ell,l} }\right  )  {\rm d} x_3.  \label{add3}
\end{align}
Substituting \eqref{eq:mean} into \eqref{add3}, we can obtain \eqref{eq:jell}.

Clearly, if $L<|x'|$, we know that $0<\sec \varpi<\sqrt {\frac{L^2}{|x'|^2}+1}$, where  
$$
\varpi \in [-\arctan L/|x'|, \arctan L/|x'| ].
$$ Therefore we can deduce \eqref{eq:C1psi}.  
 \end{proof}

Using the Jacobi-Anger expansion (cf. \cite[Page 75]{CK}), for the Herglotz wave function $v_j$ given in \eqref{eq:herg3}, we have
\begin{equation}\label{3eq:vjex}
	v_j(x)= v_j(0) j_0(k |x| )+ \sum_{\ell =1}^\infty  \gamma_{\ell j}   i^\ell   (2\ell +1 ) j_\ell  ( k |x| ), \quad x\in \R^3 , 
	\end{equation}
	where
	\begin{align*}
		v_j(0)=  \int_{{\mathbb S}^{2}}  g_j(d ) {\rm d} \sigma(d),\quad \gamma_{\ell j}= \int_{{\mathbb S}^{2}}  g_j(d ) P_\ell  (\cos( \varphi )) {\rm d} \sigma(d ), \quad d \in {\mathbb S}^{2}, 
	\end{align*}
	and 
	 $j_\ell (t)$ is the $\ell$-th spherical Bessel function  \cite{Abr} and $\varphi$ is the angle between $x$ and $d$. 
	 
	 
	 In the next lemma, we characterize the integrals $I_2^\pm$  defined by \eqref{3eq:intnotation}, which shall play a critical role in the proof of our main Theorem \ref{Th:3.1} in what follows.
	  
\begin{lemma}\label{lem:36}
Let  $\Gamma_h^\pm$ be defined in \eqref{eq:sh} and $u_0(sx)$ be the CGO solution defined in \eqref{eq:u0} with the parameter $s\in \mathbb R_+$, and $I_2^\pm$ be defined by \eqref{3eq:intnotation}.  Recall that the Herglotz wave function  $v_j$ is given in the form \eqref{eq:herg3}.  Suppose that $\eta ( x) \in C^{\alpha}(\overline{\Gamma_h^\pm } \times [-M,M] )$ ($0< \alpha<1$) satisfying \eqref{3eq:eta} and let
\begin{align}\label{3eq:Ieta}
\begin{split}
I^-_{\eta,1}& =\int_{\Gamma^-_h } \delta \eta(x')  u_0(sx')  {\mathcal R}  (j_0)  (x') {\rm d} \sigma,\ \  I^+_{\eta,1}=\int_{\Gamma^+_h } \delta \eta(x')  u_0(sx')  {\mathcal R}  (j_0)  (x') {\rm d} \sigma,\\
  I^-_{\eta,2} &=\sum_{\ell =1}^\infty \gamma_{\ell j } i^\ell (2\ell +1) \int_{\Gamma^-_h } \delta \eta(x')  u_0(sx')  {\mathcal R}  (j_\ell )  (x') {\rm d} \sigma, \\
 I^+_{\eta,2} &=\sum_{\ell =1}^\infty \gamma_{\ell j } i^\ell (2\ell +1) \int_{\Gamma^+_h } \delta \eta(x')  u_0(sx')  {\mathcal R}  (j_\ell )  (x') {\rm d} \sigma,  \\
	I_\eta^-&= v_j(0){I}_{\eta,1}^-+ { I}_{\eta,2}^- ,\quad I_\eta^+= v_j(0){I}_{\eta,1}^++ { I}_{\eta,2}^+. 
	\end{split}
\end{align}
 Assume that for a fixed $k\in 
 \mathbb R_+$, $h$ is sufficiently small such that $k^2(h^2+L^2)<1$ and 
 \begin{equation}\label{eq:lem36 cond}
 	kL<1,
 \end{equation}
 where $2L$ is the length of the interval of the dimensional reduction operator $\mathcal R$ in \eqref{dim redu op},   and $-\pi< \theta_m < \theta_M <\pi $, where $\theta_m$ and $\theta_M$ are defined in \eqref{eq:W}. Then 
\begin{align}\label{3eq:I2-final}
	I_2^-&=2\eta(0)v_j(0)s^{-1}\left( \mu(\theta_m )^{-2}-   \mu(\theta_m )^{-2} e^{ -\sqrt{sh}\mu(\theta_m ) } -  \mu(\theta_m )^{-1} \sqrt{sh} e^{ -\sqrt{sh}\mu(\theta_m ) } \right  )C(I_{311}^-)   \nonumber\\
	&\quad +v_j(0) \eta(0) I_{312}^-+ \eta(0) I_{32}^-+I_\eta^-, 
\end{align}
where $\mu(\theta_m )$ is defined in \eqref{eq:omegamu},
\begin{equation}\label{eq:335 I32}
	\begin{split}
	C(I_{311}^-)&  =C(\psi )  \Bigg[ 1-   \sum_{l=1}^\infty  \frac{(-1)^l k^{2l}} { (2l+1)!! }  a_{0,l} ^{2l} \Bigg] ,\\
		I_{32}^-&=\sum_{\ell =1}^\infty \gamma_{\ell j} i^\ell (2 \ell+1) \int_{\Gamma_h^- } u_0(sx') {\mathcal R} (j_\ell ) (x')  {\rm d} \sigma,\\
	I_{312}^-&=	-    C( \psi ) \sum_{l=1}^\infty  \frac{(-1)^l k^{2l}} { (2l+1)!! } \left( \sum_{i_1=1}^l  C(l,{i_1 })a_{0,l} ^{2(l-i_1) }  \int_{0}^h r^{2 i_1}   e^{-\sqrt{sr} \mu (\theta_m)}   {\rm d} r \right),
	\end{split}
\end{equation}
with $C(l,{i_1 })=\frac{l!}{i_1! (l-i_1)!}$ being the combinatorial number of the order $l$. Here  $a_{0,l}$, $\mathcal R(j_\ell)$ and $C(\psi )$ are defined in  \eqref{eq:j0},  \eqref{eq:jell} and \eqref{eq:329 cpsi}, respectively. 
It holds as $s\rightarrow +\infty$ that 
\begin{subequations}
	\begin{align}
		I_{312}^- &\leq \Oh(s^{-3}), \label{eq:I312} \\
	 I_{32}^- &\leq \Oh (\|g_j\|_{L^2 ({\mathbb S} ^{2})} s^{-3}) , \label{3eq:I32} \\
	\left| I_{\eta,1}^- \right|  &\leq \Oh ( s^{-1-\alpha }),\label{3eq:I1} \\
		\left| I_{\eta,2}^- \right| & \leq \Oh (\|g_j\|_{L^2 ({\mathbb S} ^{2})} s^{-{3}-\alpha }) ,\label{3eq:I2}\\
		  |I_\eta^-| &\leq  \|\eta \|_{C^\alpha } \left( |v_j(0)|  |I_{\eta,1}^-| + |I_{\eta,2}^-|  \right).  \label{eq:336 bound}
	\end{align}
	\end{subequations}

	Similarly, we have
	 \begin{align}\label{3eq:I2+final}
	I_2^+&=2\eta(0)v_j(0)s^{-1}\left( \mu(\theta_M )^{-2}-   \mu(\theta_M )^{-2} e^{ -\sqrt{sh}\mu(\theta_M ) }-  \mu(\theta_M )^{-1} \sqrt{sh}  e^{ -\sqrt{sh} \mu(\theta_M )} \right  )C(I_{311}^+)\nonumber\\
	&\quad  +v_j(0)\eta(0) I_{312}^+ +\eta(0) I_{32}^+ +I_\eta^+, 
\end{align}
where
\begin{equation}\label{eq:CI311+}
\begin{split}
C(I_{311}^+)&=C(\psi )  \Bigg[ 1-   \sum_{l=1}^\infty  \frac{(-1)^l k^{2l}} { (2l+1)!! }  a_{0,l,+} ^{2l} \Bigg],\quad a_{0,l,+} \in [-L,L],  \\
 I_{312}^+&=-C(\psi)\sum_{l=1}^\infty  \frac{(-1)^l k^{2l} } { (2l+1)!! }  \sum_{i_1=1}^l  C(l,{i_1 }) a_{0,l,+} ^{2(l-i_1) }  \int_{0}^h r^{2 i_1}   e^{-\sqrt{sr} \mu(\theta_M)}   {\rm d} r,   \\
   I_{32}^+&=\sum_{\ell =1}^\infty \gamma_{\ell j} i^\ell (2 \ell+1) \int_{\Gamma_h^+ } u_0(sx') {\mathcal R} (j_\ell ) (x')  {\rm d} \sigma.   
\end{split} 
\end{equation}
 There hold as $s \rightarrow +\infty$ that
\begin{equation}\label{eq:338 bounds}
	\begin{split}
	| I_{312}^+| &\leq \Oh(s^{-3}), 	\quad  
	| I_{32}^+| \leq \Oh (\|g_j\|_{L^2 ({\mathbb S} ^{2})} s^{-3}),\\
	\left|  I_{\eta,1}^+ \right|  &\leq \Oh ( s^{-1-\alpha }),\quad \left|  I_{\eta, 2}^+  \right|  \leq \Oh (\|g_j\|_{L^2 ({\mathbb S} ^{2})} s^{-{3}-\alpha }),\\
	|I_\eta^+|  &\leq  \|\eta \|_{C^\alpha } \left(| v_j(0)| |  I_{\eta,1}^+| + |I_{\eta,2}^+ | \right).
	\end{split}
\end{equation} 
\end{lemma}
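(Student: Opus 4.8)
The plan is to follow the two-dimensional template of Lemma~\ref{lem:28}, replacing the planar Bessel expansion of $v_j$ by the spherical one \eqref{3eq:vjex} and the functions $J_p$ by their dimension-reduced counterparts $\mathcal R(j_\ell)$ computed in \eqref{eq:j0}--\eqref{eq:jell}. First I would insert the splitting $\eta(x')=\eta(0)+\delta\eta(x')$ from \eqref{3eq:eta} into the definition of $I_2^-$ in \eqref{3eq:intnotation}, so that $I_2^-=\eta(0)\int_{\Gamma_h^-}u_0(sx')\mathcal R(v_j)(x')\,\rmd\sigma+I_\eta^-$, with $I_\eta^-$ collecting the $\delta\eta$ contribution. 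Applying $\mathcal R$ term by term to \eqref{3eq:vjex} gives $\mathcal R(v_j)=v_j(0)\mathcal R(j_0)+\sum_{\ell\ge 1}\gamma_{\ell j}i^\ell(2\ell+1)\mathcal R(j_\ell)$, which reduces everything to boundary integrals of $u_0(sx')$ against $\mathcal R(j_0)$ and $\mathcal R(j_\ell)$ over $\Gamma_h^-$, where $|x'|=r$, $\theta=\theta_m$ and $u_0(sx')=e^{-\sqrt{sr}\mu(\theta_m)}$ with $|u_0(sx')|=e^{-\sqrt{sr}\omega(\theta_m)}$ as in \eqref{eq:omegamu}.

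For the principal $v_j(0)\mathcal R(j_0)$ piece I would use \eqref{eq:j0} and binomially expand each $(|x'|^2+a_{0,l}^2)^l$. The $i_1=0$ terms are constant in $x'$ and assemble into the factor $C(I_{311}^-)$ of \eqref{eq:335 I32}; multiplying by $\int_{\Gamma_h^-}u_0(sx')\,\rmd\sigma$ and invoking \eqref{eq:I311} of Lemma~\ref{lem:u0 int} produces the leading line of \eqref{3eq:I2-final}. The remaining $i_1\ge 1$ terms carry a factor $|x'|^{2i_1}$ and constitute $I_{312}^-$; since the lowest power present is $r^2$, Lemma~\ref{lem:zeta} with $\omega(\theta_m)>0$ gives $\int_0^h r^{2i_1}e^{-\sqrt{sr}\mu(\theta_m)}\rmd r=\Oh(s^{-3})$, and the smallness $k^2(h^2+L^2)<1$ makes the $l$-series converge, yielding \eqref{eq:I312}. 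For the $\ell\ge 1$ part I would insert \eqref{eq:jell}, exploit the distinguished factor $|x'|^2$ (together with the bound \eqref{eq:C1psi} on $C_1(\psi)$) to gain the decay $\int_0^h r^2 e^{-\sqrt{sr}\mu(\theta_m)}\rmd r=\Oh(s^{-3})$, and use $|\gamma_{\ell j}|\lesssim\|g_j\|_{L^2(\mathbb{S}^2)}$ with the rapidly decaying prefactor $k^\ell(2\ell+1)/(2\ell+1)!!$ (convergent under $kL<1$) to sum in $\ell$; this is $I_{32}^-$ and gives \eqref{3eq:I32}.

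The remainder $I_\eta^-$ splits as $v_j(0)I_{\eta,1}^-+I_{\eta,2}^-$ along the same $\mathcal R(j_0)$/$\mathcal R(j_\ell)$ decomposition. Using $|\delta\eta(x')|\le\|\eta\|_{C^\alpha}|x'|^\alpha$, boundedness of $\mathcal R(j_0)$ near the vertex gives $|I_{\eta,1}^-|\lesssim\int_0^h r^{\alpha}e^{-\sqrt{sr}\omega(\theta_m)}\rmd r=\Oh(s^{-1-\alpha})$ by Lemma~\ref{lem:zeta}, which is \eqref{3eq:I1}, while the $|x'|^2$ factor in $\mathcal R(j_\ell)$ upgrades this to $\int_0^h r^{\alpha+2}e^{-\sqrt{sr}\omega(\theta_m)}\rmd r=\Oh(s^{-3-\alpha})$ after summation against $\|g_j\|_{L^2(\mathbb{S}^2)}$, giving \eqref{3eq:I2}; the triangle inequality then yields \eqref{eq:336 bound}. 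Collecting the pieces produces \eqref{3eq:I2-final}, and the identical computation on $\Gamma_h^+$ (with $\theta_m$ replaced by $\theta_M$ throughout) delivers \eqref{3eq:I2+final} and the bounds \eqref{eq:338 bounds}.

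The main obstacle is the bookkeeping of the nested expansions: the Jacobi--Anger sum over $\ell$, the spherical-Bessel series over $l$ hidden in each $\mathcal R(j_\ell)$, and the binomial expansion over $i_1$ inside $\mathcal R(j_0)$ must all be controlled simultaneously and shown to converge under $kL<1$ and $k^2(h^2+L^2)<1$. The decisive structural point, absent in two dimensions, is that reduction of $j_\ell$ for $\ell\ge 1$ forces an extra factor $|x'|^2$ via the integral mean value theorem of the preceding lemma; this is exactly what improves every $\ell\ge 1$ and every $\delta\eta$ estimate by one power of $s$ and isolates the single surviving $\Oh(s^{-1})$ term carrying $C(I_{311}^\pm)$.
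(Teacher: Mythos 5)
Your proposal is correct and follows essentially the same route as the paper's proof: the same splitting $\eta=\eta(0)+\delta\eta$, the same decomposition of $\mathcal R(v_j)$ via the Jacobi--Anger expansion into the $\mathcal R(j_0)$ and $\mathcal R(j_\ell)$ pieces, the same binomial separation of the constant-in-$r$ parts into $C(I_{311}^\pm)$ evaluated by Lemma~\ref{lem:u0 int}, and the same use of Lemma~\ref{lem:zeta} together with the extra $|x'|^2$ factor from $\mathcal R(j_\ell)$ and the smallness conditions $kL<1$, $k^2(h^2+L^2)<1$ to control all remainder terms. Your closing observation—that the dimension reduction forces the $|x'|^2$ gain for $\ell\geq 1$ and thereby isolates the single $\Oh(s^{-1})$ term carrying $C(I_{311}^\pm)$—is precisely the structural mechanism the paper exploits.
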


\begin{proof} We first investigate the boundary integral $I_2^- $ which is given by 
\eqref{3eq:intnotation}.  In this situation, the polar coordinates $x=(r\cos \theta, r \sin \theta )$ satisfy $r \in (0, h)$ and $\theta=\theta_m$ or $\theta=\theta_M$ when $x\in \Gamma_h^-$ or $x\in \Gamma_h^+$, respectively.  Since $\eta \in C^\alpha(\overline{\Gamma}_h^\pm  \times [-M,M]  )$, we have the expansion \eqref{3eq:eta}.  Substituting  \eqref{3eq:eta}  into the expression of $I_2^-$, we have\begin{align}\label{3eq:I2-}
	I_2^-   &=\eta(0) I_{21}^-+ I_\eta^-,
\end{align}
where
\begin{align*}
I_{21}^-&=	 \int_{\Gamma_h^- } u_0(sx') {\mathcal R} (v_j) (x')  {\rm d} \sigma ,\quad 
I_{\eta}^- =\int_{\Gamma^-_h } \delta \eta(x')  u_0(sx')  {\mathcal R} (v_j)  (x') {\rm d} \sigma. 
\end{align*}
By virtue of \eqref{3eq:vjex}, it can be verified that $I_\eta^-= v_j(0){I}_{\eta,1}^-+ { I}_{\eta,2}^-$.

Let
$$
{ I}_{31}^-=\int_{\Gamma_h^- } u_0(sx') {\mathcal R} (j_0) (x')  {\rm d} \sigma.
$$
Substituting \eqref{eq:jell} and \eqref{3eq:vjex}  into the expression of $I_{21}^-$ defined in \eqref{3eq:I2-}, we can deduce that
\begin{align}\label{add4}
	I_{21}^-
	&=v_j(0){ I}_{31}^-+ { I}_{32}^-,
\end{align}
where ${ I}_{32}^-$ is defined in \eqref{eq:335 I32}. 
Substituting the expansion \eqref{eq:j0}  into ${ I}_{31}^-$ and recalling that $\mu(\theta )=-\cos(\theta/2+\pi) -i \sin( \theta/2+\pi )$, we have
\begin{align}
	{ I}_{31}^-&= C( \psi )  \int_{0}^h \Bigg[ 1- \sum_{l=1}^\infty  \frac{(-1)^l  k ^{2l}} { (2l+1)!! }  \left(r^2+a_{0,l} ^2\right )^l\Bigg] e^{-\sqrt{sr} \mu(\theta_m)}   {\rm d} r
	:={ I}_{311}^-+{I}_{312}^-,\notag\\
{ I}_{311}^-&=	C( \psi )  \Bigg[ 1-   \sum_{l=1}^\infty  \frac{(-1)^l  k ^{2l}} { (2l+1)!! }  a_{0,l} ^{2l} \Bigg]   \int_{0}^h e^{-\sqrt{sr} \mu(\theta_m)}   {\rm d} r,\label{add5}
\end{align}
where $a_{0,l}$, $C( \psi )$ and ${I}_{312}^-$ are defined in \eqref{eq:j0},  \eqref{eq:329 cpsi} and \eqref{eq:335 I32}, respectively.
Moreover, using \eqref{eq:I311} in Lemma \ref{lem:u0 int},  we obtain that 
\begin{equation}\label{3eq:I311}
	{ I}_{311}^-=2s^{-1}\left( \mu(\theta_m )^{-2}-   \mu(\theta_m )^{-2} e^{ -\sqrt{sh}  \mu(\theta_m ) }\ -  \mu(\theta_m )^{-1} \sqrt{sh}   e^{ -\sqrt{sh}  \mu(\theta_m ) } \right  )C(I_{311}^-) ,
\end{equation}
where $
	C(I_{311}^-)  =C(\psi )  \Bigg[ 1-   \sum_{l=1}^\infty  \frac{(-1)^l k^{2l}} { (2l+1)!! }  a_{0,l} ^{2l} \Bigg] . 
$ 

Finally, substituting \eqref{add4}, \eqref{add5} and \eqref{3eq:I311}
into \eqref{3eq:I2-}, we have the  integral equality  \eqref{3eq:I2-final}.

 

 Following a similar arguments for deriving the integral equality \eqref{3eq:I2-final} of $I_2^-$, one can derive the  integral equality \eqref{3eq:I2+final} for $I_2^+$.

In the following, we derive the estimate for $I_\eta^-$ in \eqref{eq:336 bound} by investigating \eqref{3eq:I1} and \eqref{3eq:I2}. Substituting   \eqref{eq:j0} into $I_{\eta,1}^-$, we can derive that
\begin{align*}
	|I_{\eta,1}^-| &\leq |C(\psi )|  \|\eta \|_{C^\alpha }  \int_{0}^h r^\alpha   \Bigg| 1-   \sum_{l=1}^\infty  \frac{(-1)^l k^{2l}} { (2l+1)!! }  \left(r^2+a_{0,l} ^2\right )^l\Bigg |  e^{-\sqrt{sr} \omega(\theta_m)} {\rm d} r\\
	&=2 L\| \psi \|_{L^\infty}  \|\eta \|_{C^\alpha }   \Bigg| 1-  \sum_{l=1}^\infty  \frac{(-1)^l k^{2l }} { (2l+1)!! }  \left(\beta_{0,l}^2+a_{0,l} ^2\right )^l\Bigg |  \int_{0}^h r^\alpha e^{-\sqrt{sr} \omega(\theta_m)} {\rm d} r, 
\end{align*}
where $\beta_{0,l} \in [0,h]$ such that $k^2(\beta_{0,l}^2+a_{0,l} ^2 ) \leq  k^2( h^2+L^2) <1$ for sufficiently  small $h$ and $L$.   From \eqref{eq:zeta} in Lemma \ref{lem:1}, we obtain \eqref{3eq:I1}. 
 Substituting \eqref{eq:jell} into $I_{\eta,2}^-$, and using \eqref{eq:C1psi}, we can deduce that
\begin{align}\notag
	I_{\eta,2}^-& \leq  C_1(\psi)  \|\eta \|_{C^\alpha }\nonumber  \\
	& \quad \times \sum_{\ell =1}^\infty |\gamma_{\ell j}| \int_{0}^h r^\alpha   \frac{k^\ell  (r^2+a_{\ell }^2)^{(\ell-1)/2 } }{ (2\ell-1)!!}   \Bigg |1 -  \sum_{l=1}^\infty  \frac{ k^{2l} (r^2+a_{\ell,l }^2)^{ l }}{ 2^l l! N_{\ell,l} }   \Bigg|   r^2 e^{-\sqrt{sr} \omega(\theta_m)} {\rm d} r\nonumber \\
	&\leq C_1(\psi)  \|\eta \|_{C^\alpha }  \|g_j\|_{L^2({\mathbb S}^{2})} \int_{0}^h r^{2+\alpha}   e^{-\sqrt{sr} \omega(\theta_m)} {\rm d} r   \nonumber \\
    &\quad \quad \times \sum_{\ell =1}^\infty  \frac{k^\ell   (\beta_\ell^2+a_\ell ^2)^{(\ell-1)/2 } }{ (2\ell-1)!!}  \Bigg |1- 
\sum_{l=1}^\infty  \frac{ k^{2l} (\beta_{\ell,l}^2+a_{\ell,l }^2)^{l }}{ 2^l l! N_{\ell,l} }   \Bigg |   \nonumber   \\
&
	=\Oh(  \|g_j\|_{L^2({\mathbb S}^{2})} s^{-\alpha -3 } ) , \notag
\end{align}
where  $\beta_\ell, \beta_{\ell,l} \in [0,h]$ such that  $k^{2}(\beta_{\ell}^2+a_{\ell}^2) \leq k^{2} (h^2+L^2) <1$ and $k^2(\beta_{\ell,l}^2+a_{\ell,l}^2) \leq k^2 (h^2+L^2) <1$ for sufficiently  small $h$ and $L$, by utilizing the claim that
$$
|\gamma_{\ell j} | \leq \|g_j\|_{L^2({\mathbb S}^{2}) },
$$
 where we use the fact that  $|P_\ell(t)| \leq 1$ when $|t|\leq 1$. Consequently, \eqref{eq:336 bound} can be derived.

For $I_{312}^-$, we can deduce that
 \begin{align}\notag
 	\left| { I_{312}^-}\right| &\leq     |C( \psi )|  \sum_{l=1}^\infty  \frac{ k^{2l}} { (2l+1)!! }  \sum_{i_1=1}^l  C(l,i_1)h^{2 (i_1-1) } L ^{2(l-i_1) }\int_{0}^h r^2   e^{-\sqrt{sr} \omega(\theta_m)}   {\rm d} r\\
 	&=   |C( \psi )|  \sum_{l=1}^\infty  \frac{ k^{2l} } { (2l+1)!! h^2 }  \sum_{i_1=1}^l  C(l,i_1)h^{2 i_1 } L ^{2(l-i_1) }\int_{0}^h r^2   e^{-\sqrt{sr} \omega(\theta_m)}   {\rm d} r \nonumber \\
 	&=   |C( \psi )|  \sum_{l=1}^\infty  \frac{ k^{2l}} { (2l+1)!! h^2 }  ((h^2+L^2)^l-L^{2l} ) \int_{0}^h r^2   e^{-\sqrt{sr} \omega(\theta_m)}   {\rm d} r \nonumber \\
 	&\leq 2L   \|\psi  \|_{L^\infty}   \sum_{l=1}^\infty  \frac{ k^{2l}} { (2l+1)!! h^2 }  ((h^2+L^2)^l-L^{2l} ) \cdot   \Oh(s^{-3})  \nonumber \\
 	&=\Oh(s^{-3}), \nonumber
 \end{align}
 where we choose $h$ and $L$ such that $ k^{2}( h^2+L^2)<1$ and $k L<1$. 
 
 Substituting the expansion \eqref{eq:jell} of $j_\ell $ into $I_{32}^- $, we have  
 \begin{align}\notag
 	| I_{32}^-|&\leq C_1(\psi ) \|g_j\|_{L^2 ({\mathbb S} ^{2})}  \nonumber \\
	& \quad \times \sum_{\ell =1}^\infty \int_0^h r^2 e^{-\sqrt{sr} \omega(\theta_m) } \frac{  k^\ell (|r|^2+a_{\ell }^2)^{(\ell-1)/2 } }{ (2\ell- 1)!!}   \Bigg| 1-\sum_{l=1}^\infty  \frac{(-1)^l k^{2l} (|r|^2+a_{\ell,l }^2)^{ l }}{ 2^l l! N_{\ell,l}  }   \Bigg| {\rm d} r\nonumber\\
 	&=C_1(\psi)  \|g_j\|_{L^2 ({\mathbb S} ^{2})}  \sum_{\ell =1}^\infty \frac{  k^\ell (|\beta_\ell|^2+a_{\ell }^2)^{(\ell-1)/2 } }{ (2\ell- 1)!!} \Bigg| 1-\sum_{l=1}^\infty  \frac{(-1)^l k^{2l} (|\beta_{\ell,l}|^2+a_{\ell,l }^2)^{ l }}{ 2^l l! N_{\ell,l}}   \Bigg| \nonumber  \\
 	&\quad \times   \int_0^h r^2 e^{-\sqrt{sr} \omega(\theta_m) }  {\rm d} r \nonumber \\
 	 	&=\Oh (\|g_j\|_{L^2 ({\mathbb S} ^{2})} s^{-3}), \notag
 \end{align}
 where $\beta_\ell, \beta_{\ell, l} \in [0,h]$ such that $k^{2} (\beta_\ell^2+a_\ell^2) \leq k^{2} (h^2+L^2)<1$ and $k^2(|\beta_{\ell,l}|^2+a_{\ell,l}^2 )\leq k^2( h^2+L^2) <1$ for sufficiently  small $h$ and $L$. 
 
 The asymptotic analysis \eqref{eq:338 bounds} for $I_{312}^+$, $I_{32}^+$, $I_{\eta,1}^+$  and $I_{\eta,2}^+$  can be analyzed in a similar way, which is omitted. 
\end{proof}

In the next proposition, we deduce the lower and upper bounds for $C(I_{311}^-)$ and $C(I_{311}^+)$, where $C(I_{311}^-)$ and $C(I_{311}^+)$ are defined in  \eqref{eq:335 I32} and  \eqref{eq:CI311+}, respectively, which shall be used to prove Lemma \ref{lem:37 coeff} in the following. 
\begin{proposition}\label{pro:31}
	Let $C(I_{311}^-)$ and $C(I_{311}^+)$ be defined in  \eqref{eq:335 I32} and  \eqref{eq:CI311+}, respectively.  Assume that the condition \eqref{eq:lem36 cond} is fulfilled for a succificiently small $L\in \mathbb R_+$,  then
 \begin{equation}\label{eq:341}
 	 0<	C(I_{311}^-)\leq \frac{C(\psi )}{1-(kL)^2}, \quad  0<	C(I_{311}^+)\leq \frac{C(\psi )}{1-(kL)^2}, 
 \end{equation}
  where $C(\psi )$ is defined in \eqref{eq:339}. 
\end{proposition}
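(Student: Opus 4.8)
The plan is to treat $C(I_{311}^-)$ and $C(I_{311}^+)$ simultaneously, since by \eqref{eq:335 I32} and \eqref{eq:CI311+} they share the common form
$$C(I_{311}^\pm)=C(\psi)B_\pm,\qquad B_\pm:=1-\sum_{l=1}^\infty\frac{(-1)^l k^{2l}}{(2l+1)!!}\,a_{0,l,\pm}^{2l},$$
where $a_{0,l,-}:=a_{0,l}$ and $a_{0,l,+}$ both lie in $[-L,L]$. First I would record that the prefactor is strictly positive: since $\psi\in C_0^\infty$ is nonnegative with $\psi\not\equiv0$, its integral $C(\psi)=\int_{-L}^L\psi(x_3)\,\rmd x_3>0$ by \eqref{eq:329 cpsi}. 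Consequently the sign of $C(I_{311}^\pm)$ is that of $B_\pm$, and the two-sided bound \eqref{eq:341} reduces to establishing $0<B_\pm\le(1-(kL)^2)^{-1}$.

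The core of the argument is a single geometric-series estimate of the tail. Using $|a_{0,l,\pm}|\le L$ gives $a_{0,l,\pm}^{2l}\le L^{2l}$, and combining the trivial bound $(2l+1)!!\ge1$ with the hypothesis $kL<1$ from \eqref{eq:lem36 cond}, I would estimate
$$\left|\sum_{l=1}^\infty\frac{(-1)^l k^{2l}}{(2l+1)!!}\,a_{0,l,\pm}^{2l}\right|\le\sum_{l=1}^\infty\frac{(kL)^{2l}}{(2l+1)!!}\le\sum_{l=1}^\infty(kL)^{2l}=\frac{(kL)^2}{1-(kL)^2}.$$
In particular this shows the series defining $B_\pm$ converges absolutely, so all the manipulations are legitimate.

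From the upper half of this estimate the upper bound follows at once: $B_\pm\le1+\frac{(kL)^2}{1-(kL)^2}=\frac{1}{1-(kL)^2}$, and multiplying by $C(\psi)>0$ yields the right-hand inequalities in \eqref{eq:341}. For the lower bound I would write $B_\pm\ge1-\frac{(kL)^2}{1-(kL)^2}$ and observe that the subtracted quantity tends to $0$ as $L\to0^+$; hence, for $L$ sufficiently small (as assumed in the statement) one has $\frac{(kL)^2}{1-(kL)^2}<1$, so $B_\pm>0$ and therefore $C(I_{311}^\pm)>0$.

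I expect the positivity (lower) bound to be the only delicate point: the upper bound needs nothing beyond $kL<1$, whereas the crude estimate above only forces $B_\pm>0$ once $(kL)^2<\tfrac12$, so the \emph{sufficiently small $L$} hypothesis is genuinely used here. If one wished to weaken the threshold, I would isolate the even-index terms (the only negative contributions, since the factor is $(-1)^l$) and bound $\sum_{l\ \mathrm{even}}\frac{(kL)^{2l}}{(2l+1)!!}\le\frac{1}{15}\frac{(kL)^4}{1-(kL)^4}$, but this refinement is unnecessary given the stated hypothesis.
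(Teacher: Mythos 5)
Your proposal is correct and follows essentially the same route as the paper: the same geometric-series bound $\bigl|\sum_{l\ge 1}\frac{(-1)^l k^{2l}}{(2l+1)!!}a_{0,l}^{2l}\bigr|\le\sum_{l\ge1}(kL)^{2l}=\frac{(kL)^2}{1-(kL)^2}$ (using $|a_{0,l}|\le L$, $(2l+1)!!\ge 1$ and $kL<1$), the positivity of $C(\psi)$ from the nonnegativity of $\psi$, and the observation that the resulting lower bound $\frac{1-2(kL)^2}{1-(kL)^2}$ is positive for sufficiently small $L$. The only cosmetic difference is that the paper records the lower bound explicitly as $\frac{C(\psi)(1-2(kL)^2)}{1-(kL)^2}$, which is exactly your $C(\psi)\bigl(1-\frac{(kL)^2}{1-(kL)^2}\bigr)$.
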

\begin{proof}
Recall that  $|a_{0,l }|\leq L$ is defined in \eqref{eq:j0}. By \eqref{eq:lem36 cond}, we know that
\begin{equation}\label{eq:336}
	\left| \sum_{l=1}^\infty  \frac{(-1)^l k^{2l}} { (2l+1)!! }  a_{0,l} ^{2l} \right|  \leq \sum_{l=1}^\infty (k L)^{2l}=\frac{(k L)^2}{1-(k L)^2}.
\end{equation}
From \eqref{eq:336}, we know that \begin{equation}\label{eq:339}
  0<\frac{C(\psi )(1-2(kL)^2)}{1-(kL)^2}\leq 	C(I_{311}^-)\leq \frac{C(\psi )}{1-(kL)^2}, 
 \end{equation}
 where  $C(\psi ) = \int_{-L}^L \psi(x_3) {\rm d} x_3>0$, since  $\psi \not\equiv 0$ is a nonnegative function .

  For sufficiently small $L$,   similar to \eqref{eq:339}, we know that
 \begin{equation}\label{add6}
  0<\frac{C(\psi )(1-2(kL)^2)}{1-(kL)^2}\leq 	C(I_{311}^+)\leq \frac{C(\psi )}{1-(kL)^2}, 
 \end{equation}
 where $C(\psi )$ is defined in \eqref{eq:339}. 
\end{proof}

\begin{lemma}\label{lem:37 coeff}
Let $\theta_m$ and $\theta_M$ be defined in \eqref{eq:sh}. Assume that $\theta_m$ and $\theta_M$  fulfil the condition \eqref{eq:lem 29 cond}, and moreover \eqref{eq:lem36 cond} is satisfied for a sufficiently small $L\in \mathbb R_+$, 
	then 
\begin{equation}\label{eq:348}
	 C(I_{311}^-)\mu(\theta_m)^{-2}+C(I_{311}^+)\mu(\theta_M)^{-2} \neq 0,
\end{equation}
 where $C(I_{311}^-)$ and $C(I_{311}^+)$ are defined in  \eqref{eq:335 I32} and  \eqref{eq:CI311+}, respectively. 
\end{lemma}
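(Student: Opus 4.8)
The plan is to reduce \eqref{eq:348} to a statement about a positively weighted sum of two unit-modulus complex numbers, and then to settle it by a short modulus argument. First I would simplify $\mu(\theta)^{-2}$ using the half-angle identities $\cos(\theta/2+\pi)=-\cos(\theta/2)$ and $\sin(\theta/2+\pi)=-\sin(\theta/2)$ in the definition \eqref{eq:omegamu}. These give $\mu(\theta)=\cos(\theta/2)+i\sin(\theta/2)=e^{i\theta/2}$, and hence $\mu(\theta)^{-2}=e^{-i\theta}$. Consequently the quantity to be shown nonzero rewrites as
\[
C(I_{311}^-)\mu(\theta_m)^{-2}+C(I_{311}^+)\mu(\theta_M)^{-2}=C(I_{311}^-)\,e^{-i\theta_m}+C(I_{311}^+)\,e^{-i\theta_M},
\]
a combination of the two unit vectors $e^{-i\theta_m}$ and $e^{-i\theta_M}$ weighted by $C(I_{311}^-)$ and $C(I_{311}^+)$. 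This is the weighted counterpart of the equal-weight sum treated in Lemma \ref{lem:29}.

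Next I would record that the two coefficients are genuine \emph{positive real} numbers. From the defining formulas \eqref{eq:335 I32} and \eqref{eq:CI311+}, both $C(I_{311}^-)$ and $C(I_{311}^+)$ are real, since the terms $a_{0,l}^{2l}$ and $a_{0,l,+}^{2l}$ are nonnegative and $k$, $C(\psi)$ are real; and Proposition \ref{pro:31}, applicable under the hypothesis \eqref{eq:lem36 cond} for sufficiently small $L$, supplies the strict lower bounds $C(I_{311}^-)>0$ and $C(I_{311}^+)>0$. Writing $c_-:=C(I_{311}^-)>0$ and $c_+:=C(I_{311}^+)>0$, I would argue by contradiction: if $c_-e^{-i\theta_m}+c_+e^{-i\theta_M}=0$, then $c_-e^{-i\theta_m}=-c_+e^{-i\theta_M}$. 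Taking absolute values and using $|e^{-i\theta}|=1$ forces $c_-=c_+$; cancelling this common positive factor yields $e^{-i\theta_m}=-e^{-i\theta_M}$, that is, $e^{i(\theta_M-\theta_m)}=-1$.

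Finally I would invoke the geometric constraint on the opening angle. The assumption \eqref{eq:lem 29 cond}, namely $-\pi<\theta_m<\theta_M<\pi$, gives $0<\theta_M-\theta_m<2\pi$, so the only solution of $e^{i(\theta_M-\theta_m)}=-1$ in this range is $\theta_M-\theta_m=\pi$; this is precisely the degenerate corner excluded by the standing non-degeneracy requirement $\theta_M-\theta_m\neq\pi$ imposed throughout (the clause appearing in \eqref{eq:ass3} and \eqref{eq:ass31}). The resulting contradiction proves \eqref{eq:348}. I expect the only delicate point to be the positivity and reality of the weights $C(I_{311}^\pm)$ — which is exactly the content that Proposition \ref{pro:31} is designed to furnish — because once both coefficients are known to be positive reals, the absolute-value step collapses the whole question to the anti-parallelism test $e^{i(\theta_M-\theta_m)}=-1$. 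Compared with Lemma \ref{lem:29}, the extra subtlety is that here the two weights need not coincide, so the modulus step (forcing $c_-=c_+$ before cancellation) is genuinely essential to exclude cancellation.
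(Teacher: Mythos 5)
Your proof is correct, and it takes a genuinely different and more elementary route than the paper's. The paper clears denominators, writing the quantity as a fraction whose numerator is $\left(C(I_{311}^+)\cos\theta_m+C(I_{311}^-)\cos\theta_M\right)+i\left(C(I_{311}^+)\sin\theta_m+C(I_{311}^-)\sin\theta_M\right)$ (note the weights get swapped by this manipulation), and then argues on the sign of the real part: when $\cos\theta_m$ and $\cos\theta_M$ have the same sign, positivity of the weights from Proposition \ref{pro:31} suffices; when the signs differ, the paper uses the two-sided bounds \eqref{eq:339} and \eqref{add6} together with the freedom to shrink $L$ (so that the two weights have ratio arbitrarily close to $1$) to show the weighted combination inherits the sign of $\cos\theta_m+\cos\theta_M$, split further into Cases A and B. Your modulus argument bypasses the case analysis entirely: vanishing of $c_-e^{-i\theta_m}+c_+e^{-i\theta_M}$ with $c_\pm>0$ forces $c_-=c_+$ by taking absolute values, and then $e^{i(\theta_M-\theta_m)}=-1$, i.e.\ $\theta_M-\theta_m=\pi$. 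This uses only the \emph{lower} bounds (positivity) in Proposition \ref{pro:31}, needs no smallness of $L$ beyond what positivity requires, and in fact proves a stronger statement valid for arbitrary positive weights, not just weights close to each other.

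One point deserves flagging: your final step invokes $\theta_M-\theta_m\neq\pi$, which is not literally among the lemma's stated hypotheses (\eqref{eq:lem 29 cond} and \eqref{eq:lem36 cond}); you import it from the theorems' assumptions \eqref{eq:ass3}, \eqref{3eq:ass3}. This is not a defect of your argument but of the lemma's formulation: the condition is genuinely needed, since if $\theta_M-\theta_m=\pi$ then $e^{-i\theta_m}=-e^{-i\theta_M}$ and the sum collapses to $\left(C(I_{311}^-)-C(I_{311}^+)\right)e^{-i\theta_m}$, which can vanish if the two mean-value constants in \eqref{eq:335 I32} and \eqref{eq:CI311+} happen to coincide. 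The paper's own proof tacitly relies on the same condition: its opening claim that, under \eqref{eq:lem 29 cond} alone, $\cos\theta_m+\cos\theta_M$ and $\sin\theta_m+\sin\theta_M$ cannot vanish simultaneously fails exactly when $\theta_M-\theta_m=\pi$ (e.g.\ $\theta_m=-\pi/2$, $\theta_M=\pi/2$). Since the lemma is only ever applied under \eqref{3eq:ass3}, which contains $\theta_M-\theta_m\neq\pi$, both proofs are sound in context; yours has the merit of making this dependence explicit rather than hiding it in an incorrect preliminary claim.
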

\begin{proof}
It can be calculated that
\begin{align*}
	&C(I_{311}^-) \mu(\theta_m)^{-2}+C(I_{311}^+) \mu(\theta_M)^{-2}\\
	&=\frac{(C(I_{311}^+)\cos \theta_m+C(I_{311}^-)\cos \theta_M)+i (C(I_{311}^+)\sin \theta_m+C(I_{311}^-)\sin \theta_M )}{ (\cos \theta_m+i \sin \theta_m )(\cos \theta_M+i \sin \theta_M )}.
\end{align*}
Therefore, under  the assumption \eqref{eq:lem 29 cond}, we know that
$$
\cos \theta_m+\cos \theta_M \mbox{ and } \sin \theta_m+\sin \theta_M
$$
can not be zero simultaneously. Without loss of generality, we assume that $\cos \theta_m+\cos \theta_M \neq   0$.   Then we consider the following two cases:
\begin{itemize}
	\item[(i)] Case A: $\cos \theta_m+\cos \theta_M >  0$,
	\item[(ii)] Case B: $\cos \theta_m+\cos \theta_M < 0$. 
\end{itemize}
For  Case A, we first consider that $\cos \theta_m$ and $\cos \theta_M$ have the same sign.   From  \eqref{eq:341} in Proposition \ref{pro:31}, it is not difficult to see that the real  part of  the {\bl numerator} of $C(I_{311}^-)\mu(\theta_m)^{-2}+C(I_{311}^+)\mu(\theta_M)^{-2} $  can not be zero. Therefore, \eqref{eq:348} is proved when $\cos \theta_m$ and $\cos \theta_M$ have the same sign.

  In the following, we assume that $\cos \theta_m$ and $\cos \theta_M$ have different signs. Then it implies that $\cos \theta_m \leq  0$ and $\cos \theta_M >  0$.  From \eqref{eq:339} and \eqref{add6}, we can deduce that 
\begin{align*}
	\frac{C(\psi )}{1-(kL)^2}
	(  \cos \theta_m+   (1-2(kL)^2)   \cos \theta_M  )  
 & \leq C(I_{311}^+)\cos \theta_m +C(I_{311}^-)\cos \theta_M \\
 &\leq \frac{C(\psi )}{1-(kL)^2}
	( (1-2(kL)^2)  \cos \theta_m+ \cos \theta_M  ) .  \nonumber 
\end{align*}
Since $L$ is flexible, for a given $0< \varepsilon   <1$, we can choose $L$ such that  $0<k L<\sqrt { \varepsilon /2 }$, from which  we can derive the bounds as follows
\begin{align}\label{eq:351}
	\frac{C(\psi )}{1-(kL)^2}
\left 	( \cos \theta_m  +  (1- \varepsilon )    \cos \theta_M  \right  )  
 & \leq C(I_{311}^+)\cos \theta_m  +C(I_{311}^-) \cos \theta_M \nonumber \\
 & \leq \frac{C(\psi )}{1-(kL)^2 }
	\left ( (1- \varepsilon ) \cos \theta_m  +  \cos \theta_M \right   ) .  
\end{align}
Since  $\cos \theta_m+\cos \theta_M > 0$, we can consider the lower bound in  \eqref {eq:351}. Denote $\varepsilon_0=\min \{\frac{\cos \theta_m+\cos \theta_M }{2\cos \theta_M}, 1\}$ and choose $\varepsilon\in (0, \varepsilon_0)$. It can be verified that 
\begin{align*}
C(I_{311}^+)\cos \theta_m +C(I_{311}^-) \cos \theta_M \geq \frac{C(\psi )}{1-(kL)^2 }
\left ( \cos \theta_m+ (1-\varepsilon)\cos \theta_M \right   ) >0, 
\end{align*}
 which means that \eqref{eq:348} still holds.

For Case B, if $\cos \theta_m=0$ or $\cos \theta_M=0$ is satisfied,  from the upper bound of \eqref{eq:351} we can easily show that 
\begin{equation}\label{eq:boundtemp}
	C(I_{311}^+)\cos \theta_m +C(I_{311}^-) \cos \theta_M <0. 
\end{equation}
Otherwise, if $|\cos \theta_m  | \leq |\cos \theta_M |$, from the fact that $(1- \varepsilon  ) |\cos \theta_m  | \leq |\cos \theta_M |$, we know that \eqref{eq:boundtemp} still holds  from the upper bound of \eqref{eq:351}.  If $|\cos \theta_m  | > |\cos \theta_M |$, we can choose $\varepsilon$ such that $ \varepsilon >1- |\cos \theta_M | / |\cos \theta_m | >0 $ to make \eqref{eq:boundtemp} also be fufilled  from the upper bound of \eqref{eq:351}. Therefore, for Case B, we know that \eqref{eq:348} is  fulfilled. 

The proof is complete. 
\end{proof}

We are in a position to present another main result of this paper on the vanishing of conductive transmission eigenfunctions at an edge corner in 3D.} 

\begin{theorem}\label{Th:3.1}
{\bl Let $\Omega \Subset \mathbb R^3$ be a bounded Lipschitz domain with $0\in \partial \Omega$ and} ${S_h}\subset \R^{2}$ be defined in \eqref{eq:sh}, $M>0$, $0<\alpha<1$. For any fixed {\bl $x_3^c \in (-M,M)$} and $L>0$ defined in Definition \ref{Def}, we suppose that  $L $ is sufficiently small such that $(x_3^c-L,x_3^c+L) \subset (-M,M) $ {\bl and
$$
(B_h  \times (-M,M) ) \cap \Omega=S_h \times (-M,M),
$$
where $B_h\Subset \mathbb R^2 $ is  the central ball of radius $h \in \mathbb R_+$. Assume that $v,w\in H^1(\Omega )$ are the transmission eigenfunctions to \eqref{eq:in eig}}  and there exists a sufficiently  small neighbourhood {\bl $S_h \times (-M,M)$ (i.e. $h>0$ is sufficiently  small) of $(0,x_3^c)$ with $x_3^c\in (-M,M)$} such that $qw \in C^{\alpha}(\overline {S}_h \times [-M,M] )$ and $\eta \in C^\alpha(\overline{\Gamma}_h^\pm  \times [-M,M]  )$   for $0<\alpha<1$,   and  $v-w\in H^2({S_h}\times (-M,M) )$,    where   $0$ is the vertex of  $S_h$  defined in \eqref{eq:sh}. Write {\bl $x=(x', x_3) \in \R^{3}, \, x'\in \R^{2}$}.  If the following conditions are fulfilled: 
	\begin{itemize}
		\item[(a)] the transmission eigenfunction $v$ can be approximated in $H^1(S_h \times (-M,M))$ by the Herglotz functions $v_j$, $j=1,2,\ldots$,  with kernels $g_j$ satisfying the approximation property  \eqref{3eq:ass1}, where the parameter $\varrho$ in \eqref{3eq:ass1} fulfills that $0<\varrho<\alpha$, 
	\item[(b)] the function $\eta=\eta(x')$ is independent  of $x_3$ and  
	\begin{equation}\label{3eq:ass2}
		\eta(0) \neq 0, 
	\end{equation}
	\item[(c)]  the angles $\theta_m$ and $\theta_M$ of  $S_h$ satisfy 
	\begin{equation}\label{3eq:ass3} 
		-\pi < \theta_m < \theta_M < \pi \mbox{ and } \theta_M-\theta_m \neq \pi,
	\end{equation}
	\end{itemize}
then for every edge point  {\bl $(0,x_3^c) \in \R^3 $} of $S_h \times (-M,M)$ where {\bl $x_3^c\in (-M,M)$}, one has
{\bl 
\[
	\lim_{ \rho \rightarrow +0 }\frac{1}{m(B((0,x_3^c), \rho  )\cap \Omega ) } \int_{m(B((0, x_3^c), \rho  ) \cap \Omega ) } |v(x)| {\rm d} x=0,
\]
}	where $m(B((0,x_3^c), \rho  )\cap\Omega)$ is the volume of $B((0,x_3^c) ,\rho )\cap \Omega$.  

\end{theorem}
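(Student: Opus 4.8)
The plan is to imitate the two-dimensional argument of Theorem~\ref{Th:1.1}, the new ingredient being that the three-dimensional edge problem must first be collapsed to a planar problem on $S_h$ via the dimension reduction operator $\mathcal R$. As in Remark~\ref{rem:th1.1} the assertion is local, so I would assume that $v,w$ solve \eqref{eq:3d in eig} on $S_h\times(-M,M)$ and, after a translation in $x_3$, take the edge point to be the origin, i.e. $x_3^c=0$. Applying $\mathcal R$, Lemma~\ref{lem:32} furnishes the reduced transmission problem \eqref{eq:eig reduc} for $\mathcal R(v),\mathcal R(w)\in H^1(S_h)$ together with the Green-type identity \eqref{eq:36 int}; inserting the Herglotz approximants $v_j$ of \eqref{eq:herg3} and writing $F_{3j}=-k^2\mathcal R(v_j)$, Lemma~\ref{lem:int2} then yields the working identity \eqref{3eq:int identy}. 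The whole purpose of this reduction is that the planar CGO solution $u_0(sx')$ of Lemma~\ref{lem:1}, with its integral bounds from Lemmas~\ref{lem:23} and \ref{lem:1}, can now be used exactly as in the plane.

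Next I would substitute the boundary asymptotics \eqref{3eq:I2-final}--\eqref{3eq:I2+final} of Lemma~\ref{lem:36} for $I_2^\pm$, split the bulk integral as $\int_{S_h}u_0=\int_W u_0-\int_{W\setminus S_h}u_0$ using the exact evaluation \eqref{eq:u0w} and the exponentially small remainder \eqref{eq:1.5}, multiply \eqref{3eq:int identy} by $s$, and regroup. The only contribution surviving at order $s^0$ and proportional to $v_j(0)$ comes from the leading parts of $I_2^-$ and $I_2^+$, whose coefficient is
\[
2\eta(0)\big(C(I_{311}^-)\mu(\theta_m)^{-2}+C(I_{311}^+)\mu(\theta_M)^{-2}\big).
\]
All remaining $v_j(0)$-proportional pieces (those carried by $I_{312}^\pm$ and $I_{\eta,1}^\pm$) come with coefficients of order $s^{-2}$ and $s^{-\alpha}$, so they are harmless and may be absorbed into the main coefficient.

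I would then set $s=j$ and let $j\to\infty$. Every term not proportional to $v_j(0)$ must be shown to vanish: $I_3$ is exponentially small by \eqref{3eq:I3}; $\delta_j$ and $\epsilon_j^\pm$ are controlled by Lemma~\ref{lem:3 delta} through \eqref{3eq:deltajnew3}--\eqref{3eq:xij}; the remainder integrals $\int_{S_h}\delta F_1u_0$, $\int_{S_h}\delta F_2u_0$, $\int_{S_h}\delta F_{3j}u_0$ by \eqref{3eq:deltaf1j}--\eqref{3eq:deltaf2}; and the subleading pieces $I_{312}^\pm$, $I_{32}^\pm$, $I_{\eta,2}^\pm$ by the bounds collected in Lemma~\ref{lem:36}. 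The delicate point, and what I expect to be the main obstacle, is that in three dimensions the kernels are only assumed to grow like $\|g_j\|_{L^2(\mathbb S^2)}\le Cj^{1+\varrho}$, so the $\mathcal R$-induced quantities $F_{3j}$ and $\mathcal R(v_j)$ have crude size $\Oh(j^{1+\varrho})$; one must verify that the extra decay produced by $\mathcal R$ (the sharper $s^{-3}$-type rates and the $|x'|^2$ factor in \eqref{eq:jell}) together with the exact evaluation \eqref{eq:u0w} of the bulk term always beats this growth. The decisive check is $s\int_{S_h}\delta F_{3j}u_0=\Oh(\|g_j\|s^{-\alpha-1})=\Oh(j^{\varrho-\alpha})$, which tends to zero \emph{only because} \eqref{3eq:ass1} is imposed with $0<\varrho<\alpha$. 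Collecting everything, the identity reduces to
\[
v_j(0)\,\big[\,2\eta(0)\big(C(I_{311}^-)\mu(\theta_m)^{-2}+C(I_{311}^+)\mu(\theta_M)^{-2}\big)+E(j)\,\big]=R(j),
\]
with $E(j)\to0$ and $R(j)\to0$. By Lemma~\ref{lem:37 coeff} (whose proof exploits the angle condition \eqref{3eq:ass3} and the freedom in choosing $L$ via Proposition~\ref{pro:31}) the bracket tends to a nonzero limit, and since $\eta(0)\neq0$ by \eqref{3eq:ass2}, dividing gives $\lim_{j\to\infty}v_j(0)=0$.

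Finally, since $v_j(0)=v_j(0,x_3^c)$ is the value of the approximant at the edge point and $v_j\to v$ in $H^1(S_h\times(-M,M))$, I would transfer this vanishing to the averaged statement exactly as in \eqref{eq:250}: bound the volume average of $|v|$ over $B((0,x_3^c),\rho)\cap\Omega$ by the average of $|v-v_j|$ plus the average of $|v_j|$, send $\rho\to0$ so the latter becomes $|v_j(0,x_3^c)|$, and then let $j\to\infty$. I do not expect either the reduction (Lemma~\ref{lem:32}) or this final transfer to be difficult; the genuine work is the asymptotic bookkeeping of the middle step, namely organizing the many $\Oh(s^{-\beta})$- and $\|g_j\|$-weighted terms so that precisely the coefficient of $v_j(0)$ survives, with the $j^{1+\varrho}$ growth everywhere dominated thanks to $\varrho<\alpha$ and the extra decay supplied by $\mathcal R$.
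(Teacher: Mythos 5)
Your proposal is correct and follows essentially the same route as the paper's own proof: dimension reduction via $\mathcal R$ (Lemma~\ref{lem:32}), the working identity \eqref{3eq:int identy} of Lemma~\ref{lem:int2}, the boundary asymptotics of Lemma~\ref{lem:36}, the estimates of Lemmas~\ref{lem:3 delta} and \ref{lem:int2} with the decisive role of $0<\varrho<\alpha$, the nonvanishing of the coefficient $C(I_{311}^-)\mu(\theta_m)^{-2}+C(I_{311}^+)\mu(\theta_M)^{-2}$ from Lemma~\ref{lem:37 coeff}, and the final transfer via the analogue of \eqref{eq:250}. The asymptotic bookkeeping you outline matches the paper's \eqref{3eq:45}--\eqref{3eq:47 a} exactly.
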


\begin{proof}  
One can easily see that the transmission eigenfunctions $v,w\in H^1(\Omega )$   to \eqref{eq:in eig} fulfill the PDE system \eqref{eq:3d in eig}. Using the dimensional reduction operator $\mathcal R$ given by \eqref{dim redu op}, we can show that $\mathcal R(v)$ and $\mathcal R(w)$ satisfy \eqref{eq:eig reduc}. By virtue of Lemma \ref{lem:32} and Lemma \ref{lem:int2}, we know that the integral equality \eqref{3eq:int identy} holds under the assumption that $qw \in C^{\alpha}(\overline {S}_h \times [-M,M] )$  for $0<\alpha<1$   and  $v-w\in H^2({S_h}\times (-M,M) )$.  Since $\eta \in C^\alpha(\overline{\Gamma}_h^\pm  \times [-M,M]  )$  for $0<\alpha<1$, from Lemma \ref{lem:36}, we can obtain \eqref{3eq:I2-final} and \eqref{3eq:I2+final}. Recall that $\mu(\theta_m )$ and  $\mu(\theta_M)$ are defined in \eqref{eq:omegamu}.  Therefore, substituting \eqref{3eq:I2-final} and \eqref{3eq:I2+final} into  \eqref{3eq:int identy}, after rearranging terms and multiplying $s$ on the both sides of \eqref{3eq:int identy},  we deduce that
\begin{equation}
\begin{split}\label{3eq:45}
&	2v_j(0)\eta(0)\Big[ \left( \mu(\theta_M )^{-2}-   \mu(\theta_M )^{-2} e^{ -\sqrt{sh}  \mu(\theta_M ) } -  \mu(\theta_M )^{-1} \sqrt{sh} e^{ -\sqrt{sh} \mu(\theta_M ) }  \right  )C(I_{311}^+) \\
	&\quad + \left( \mu(\theta_m )^{-2}-   \mu(\theta_m )^{-2} e^{ -\sqrt{sh} \mu(\theta_m ) } -  \mu(\theta_m )^{-1} \sqrt{sh}  e^{ -\sqrt{sh}  \mu(\theta_m ) } \right  ) C(I_{311}^-)\Big]  \\
	&=s\Big[ I_3-(F_{1 } (0)+F_{2 } (0)+F_{3j } (0)) \int_{S_h} u_0(sx') {\rm d} x' - \delta_j(s)  \\
	&\quad - \eta(0) (I_{32}^+ +  I_{32}^- ) -I_\eta^+  - I_\eta^- -\int_{S_h}  \delta  F_{1 } (x') u_0(sx') {\rm d} x' -\int_{S_h} \delta  F_{2 } (x')u_0(sx') {\rm d} x' \\
	&\quad -\int_{S_h} \delta  F_{3j } (x')u_0(sx') {\rm d} x'- v_j(0) \eta(0)\left( I_{312}^-+ I_{312}^+ \right) - \epsilon_j^\pm(s)\Big], 
\end{split}
\end{equation}
where $C(I_{311}^-)$ and $C(I_{311}^+)$ are defined in  \eqref{eq:335 I32} and  \eqref{eq:CI311+}, respectively. Here $\delta F_{3j}$, $\delta F_1$ and  $\delta F_2$ are given by \eqref{eq:F3j} and \eqref{eq:326F3j}.  

When $s=j$,  under the assumption \eqref{3eq:ass1},  using \eqref{eq:I312}, \eqref{3eq:I32}, \eqref{eq:336 bound} and \eqref{eq:338 bounds} in Lemma \ref{lem:36} we know that
\begin{equation}
\begin{split}\label{3eq:46}
&j |  I_{32}^-|  \leq \Oh( j^{-2} \|g_j\|_{L^2({\mathbb S}^2 )} ) \leq \Oh( j^{-1+\varrho}  ) ,\quad {  j |  I_{32}^+ |  \leq \Oh( j^{-2} \|g_j\|_{L^2({\mathbb S}^2  )} )} \leq \Oh( j^{-1+\varrho}  ) ,  \\
& j  I_{312}^- \leq \Oh(j^{-2}),\quad j I_{312}^+ \leq \Oh(j^{-2}),
\end{split}
\end{equation}
and
\begin{equation}\label{3eq:46 a}
\begin{split}	
&j |I_\eta^-| \leq  \|\eta \|_{C^\alpha } \left( |v_j(0)| \Oh (j^{-\alpha })+ \Oh (\|g_j\|_{L^2({\mathbb S}^2 )} j^{-{2}-\alpha } ) \right) \\
&\hspace{0.8cm}\leq  \|\eta \|_{C^\alpha } \left( |v_j(0)| \Oh (j^{-\alpha })+ \Oh (j^{-{1}-(\alpha -\varrho ) } ) \right),   \\
& j|I_\eta^+|  \leq  \|\eta \|_{C^\alpha } \left(| v_j(0)| \Oh (j^{-\alpha })+ \Oh (\|g_j\|_{L^2({\mathbb S}^2 )} j^{-{2}-\alpha } ) \right), \\
&\hspace{0.8cm}\leq  \|\eta \|_{C^\alpha } \left( |v_j(0)| \Oh (j^{-\alpha })+ \Oh (j^{-{1}-(\alpha -\varrho ) } ) \right),   
\end{split}
\end{equation}
as $j\rightarrow +\infty$.  
Clearly, when $s=j$, under the assumption \eqref{3eq:ass1}, from \eqref{eq:u0w}, \eqref{eq:1.5},   \eqref{3eq:deltajnew3} and \eqref{3eq:xij} in Lemma \ref{lem:3 delta},   \eqref{3eq:I3},  \eqref{3eq:deltaf1j} and \eqref{3eq:deltaf2} in Lemma \ref{lem:int2}, we can derive that
\begin{equation}
\begin{split}
\label{3eq:47}
	&j |I_3| \leq  C j e^{-c' \sqrt j},\ j \left| \int_{S_h} u_0(jx) {\rm d} x \right| \leq \frac{ 6 |e^{-2\theta_M i }-e^{-2\theta_m i }  | } {j} +  \frac{6(\theta_M-\theta_m )e^{-\delta_W \sqrt{h j}/2}}{j\delta_W^4}  ,\\
&j \left|\int_{S_h}  \delta F_{3j  } (x) u_0(sx') {\rm d} x' \right | \leq \frac{8 L\sqrt{\pi}\|\psi\|_{L^\infty}(\theta_M- \theta_m) \Gamma(2 \alpha+4) }{ \delta_W^{2\alpha+4   }  } k^2 {\rm diam}(S_h)^{1-\alpha } \\
&\hspace{5cm} \times   (1+k) \|g_j\|_{L^2( {\mathbb S}^{2})}   j^{-\alpha-1   } \leq \Oh \left (j^{-(\alpha-\varrho )} \right  ),   \\
	& j \left|\int_{S_h}  \delta F_{1 } (x) u_0(sx') {\rm d} x' \right | \leq  \frac{2\|F_{1}\|_{C^\alpha } (\theta_M-\theta_m )\Gamma(2\alpha+4) }{ \delta_W^{2\alpha+4}} j^{-\alpha-1}  ,  \\
	&j	\left|\int_{S_h}  \delta F_{2 } (x) u_0(sx') {\rm d} x' \right | \leq \frac{2\|F_{2}\|_{C^\alpha } (\theta_M-\theta_m )\Gamma(2\alpha+4) }{ \delta_W^{2\alpha+4}} j^{-\alpha-1},
	\end{split}
\end{equation}
and 
	\begin{equation}\label{3eq:47 a}
\begin{split}
	&j |\epsilon_j^\pm (j) |\leq C \|\psi\|_{L^\infty} \left( |\eta(0)|  \frac{\sqrt { \theta_M-\theta_m }  e^{-\sqrt{j \Theta } \delta_W } h } {\sqrt 2 } j \right. \\
	&\left. \hspace{4cm} + \|\eta\|_{C^\alpha } j^{-\alpha } \frac{\sqrt{2(\theta_M-\theta_m) \Gamma(4\alpha+4) } }{(2\delta_W)^{2\alpha+2  } } \right) j^{-1-\Upsilon } ,  \\
	&j|\delta_j(j)| \leq   \frac{ k^2 \|\psi \|_{L^\infty}  \sqrt{C(L,h)  ( \theta_M-\theta_m )}  e^{-\sqrt{s \Theta } \delta_W } h  }{\sqrt 2 } j^{-\Upsilon },  \quad  \Theta  \in [0,h ], 
\end{split}
\end{equation}
as $j\rightarrow +\infty$, where $c'>0$ and $\delta_W$ are defined in \eqref{3eq:I3} and \eqref{eq:xalpha}, respectively.

The coefficient of $v_j(0)$ of \eqref{3eq:45} with respect to the zeroth order of $s$ is 
$$
2 \eta(0)\left  (C(I_{311}^-)\mu(\theta_m)^{-2}+C(I_{311}^+)\mu(\theta_M)^{-2} \right ). 
$$


 In \eqref{3eq:45}, we take $s=j$ and let $j\rightarrow \infty$, combining with \eqref{3eq:46}, \eqref{3eq:46 a},  \eqref{3eq:47} and \eqref{3eq:47 a}, we can prove that
 \begin{equation}\label{eq:3v(0)}
\lim_{j \rightarrow \infty} \eta(0)\left(C(I_{311}^-)\mu(\theta_m)^{-2}+C(I_{311}^+)\mu(\theta_M)^{-2} \right)  v_j(0)=0. 
 \end{equation}
Under the assumption \eqref{3eq:ass3},  from Lemma \ref{lem:37 coeff}, we have  $C(I_{311}^-)\mu(\theta_m)^{-2}+C(I_{311}^+)\mu(\theta_M)^{-2} \neq 0$. Therefore, from \eqref{3eq:ass2} and \eqref{eq:3v(0)}, we prove that 
$$
\lim_{j \rightarrow \infty} v_j(0)=0. 
$$
Using the a similar argument of \eqref{eq:250}, we finish the proof of this theorem. 
\end{proof}

{
\begin{remark}
Similar to Remark~\ref{rem:th1.1}, Theorem~\ref{Th:3.1} can be localized. Moreover, we would like to mention that in contrast to the regularity assumption on $v-w$ near the corner in 2D of Theorem \ref{Th:1.1}, we impose that $v-w \in H^2(S_h \times(-M,M))$ in Theorem \ref{Th:3.1}, where we need to use the $C^\alpha$-continuity of $\mathcal R(v-w)$ to investigate the asymptotical order of $s$ with respect to $s \rightarrow \infty $ for the volume integral of $F_1(x')$ over $S_h$ in \eqref{3eq:int identy}. 
\end{remark}
}

Similar to Corollary \ref{cor:2.1}, we consider the vanishing property of the interior transmission eigenfunctions $v \in H^1(W \times (-M,M))$ and $w \in H^1(W \times (-M,M)) $ to \eqref{eq:in eig reduce} on the edge point under the assumptions \eqref{3eq:ass3} and \eqref{3eq:ass1 int}.  

\begin{corollary}\label{cor:3.1}
	{\bl Let $\Omega \Subset \mathbb R^3$ be a bounded Lipschitz domain with $0\in \partial \Omega$ and} ${S_h}\subset \R^{2}$ be defined in \eqref{eq:sh}, $M>0$, $0<\alpha<1$. For any fixed {\bl $x_3^c \in (-M,M)$} and $L>0$ defined in Definition \ref{Def}, we suppose that  $L $ is sufficiently small such that $(x_3^c-L,x_3^c+L) \subset (-M,M) $ {\bl and
$$
(B_h \times (-M,M) ) \cap \Omega=S_h \times (-M,M),
$$
where $B_h\Subset \mathbb R^2 $ is  the central ball of radius $h \in \mathbb R_+$.} Suppose {\bl  $v \in H^1(\Omega )$ and $w \in H^1(\Omega ) $ are}  the interior transmission eigenfunctions  to \eqref{eq:in eig reduce} in $\R^3$.   Suppose that   there exists a sufficiently  small neighbourhood {\bl $S_h \times (-M,M)$ (i.e. $h>0$ is sufficiently  small) of $(0,x_3^c)$ with $x_3^c\in (-M,M)$} such that $qw \in C^{\alpha}(\overline {S_h} \times [-M,M] )$  for $0< \alpha  <1$,  and $v-w\in H^2({S_h}\times(-M,M))$,  where $0 $ is the vertex of  $S_h$  defined in \eqref{eq:sh}.    If the following conditions are fulfilled: 
	\begin{itemize}
		\item [(a)] the transmission eigenfunction $v$ can be approximated in $H^1(S_h \times (-M,M))$ by the Herglotz waves $v_j$, $j=1,2,\ldots$,  with kernels $g_j$ satisfying 
	\begin{equation}\label{3eq:ass1 int}
		\|v-v_j\|_{H^1(S_h \times (-M,M) )} \leq j^{-2-\Upsilon},\quad  \|g_j\|_{L^2({\mathbb S}^{2})} \leq C j^{\varrho}, 
	\end{equation}
	for some positive constants $C$, $\Upsilon >0$ and $0< \varrho<\alpha  $,
	\item[(b)]  the angles $\theta_m$ and $\theta_M$ of $S_h$ satisfy 
	\begin{equation}\label{3eq:ass2 int} 
		-\pi < \theta_m < \theta_M < \pi \mbox{ and } \theta_M-\theta_m \neq \pi,
	\end{equation}
	\end{itemize}
	then we have
	$$
	{\bl 
	\lim_{ \rho \rightarrow +0 }\frac{1}{m(B((0,x_3^c), \rho  )\cap P(\Omega) )} \int_{B((0,x_3^c), \rho ) \cap P(\Omega)} {\mathcal R}(V w)(x')  {\rm d} x'=0,} 
	$$
  where {\bl $ P(\Omega )$ is the projection set of $\Omega$ on $\mathbb R^2$ and } $q(x',x_3)=1+V(x',x_3)$. 
\end{corollary}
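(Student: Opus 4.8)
The plan is to follow the proof of Corollary~\ref{cor:2.1} in spirit, replacing each two-dimensional ingredient by its dimension-reduced counterpart obtained from the operator $\mathcal R$. First I would note that a pair of interior transmission eigenfunctions $v,w\in H^1(\Omega)$ to \eqref{eq:in eig reduce} fulfils \eqref{eq:3d in eig} with $\eta\equiv 0$, and apply Lemma~\ref{lem:32} to pass to the planar system \eqref{eq:eig reduc} for $\mathcal R(v),\mathcal R(w)$ and to the integral identity \eqref{eq:36 int}. Since $\eta\equiv 0$ the boundary term $\int_{\Gamma_h^\pm}\eta\,\mathcal R(v)u_0\,\mathrm d\sigma$ disappears, so \eqref{eq:36 int} becomes $\int_{S_h}(F_1+F_2+F_3)u_0(sx')\,\mathrm dx'=I_3$. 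Replacing $F_3=-k^2\mathcal R(v)$ by $F_{3j}=-k^2\mathcal R(v_j)$ as in Lemma~\ref{lem:int2} with $\eta\equiv0$ (so that $I_2^\pm=\epsilon_j^\pm(s)=0$) yields the reduced form of \eqref{3eq:int identy},
\begin{equation*}
(F_1(0)+F_2(0)+F_{3j}(0))\!\int_{S_h}\!u_0(sx')\,\mathrm dx'+\delta_j(s)=I_3-\!\int_{S_h}\!\delta F_1\,u_0\,\mathrm dx'-\!\int_{S_h}\!\delta F_2\,u_0\,\mathrm dx'-\!\int_{S_h}\!\delta F_{3j}\,u_0\,\mathrm dx'.
\end{equation*}

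Next I would substitute \eqref{eq:u0w} and the tail bound \eqref{eq:1.5} into $\int_{S_h}u_0(sx')\,\mathrm dx'=\int_W u_0-\int_{W\setminus S_h}u_0$, set $s=j$, and multiply by $j^2$. The stronger approximation rate \eqref{3eq:ass1 int}, $\|v-v_j\|_{H^1(S_h\times(-M,M))}\le j^{-2-\Upsilon}$ with $0<\varrho<\alpha$, is exactly what makes the $j^2$-scaling admissible: the bounds \eqref{3eq:deltaf1}, \eqref{3eq:deltaf2} and \eqref{3eq:I3} of Lemma~\ref{lem:int2} give $j^2$ times the $\delta F_1,\delta F_2,I_3$ terms of size $O(j^{-\alpha})$ and $O(j^2e^{-c'\sqrt j})$; the bound \eqref{3eq:deltaf1j} together with $\|g_j\|_{L^2(\mathbb S^2)}\le Cj^\varrho$ controls the $\delta F_{3j}$ term by $O(j^{-(\alpha-\varrho)})$; and the estimate \eqref{3eq:deltajnew3} of Lemma~\ref{lem:3 delta}, whose proof uses the approximation rate directly, gives with \eqref{3eq:ass1 int} the bound $|\delta_j(j)|\le C e^{-\sqrt{j\Theta}\delta_W}h\,j^{-2-\Upsilon}$, so that $j^2|\delta_j(j)|=O(j^{-\Upsilon})$. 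As $j^2\int_{S_h}u_0(jx')\,\mathrm dx'\to 6i(e^{-2\theta_M i}-e^{-2\theta_m i})$, which is nonzero because \eqref{3eq:ass2 int} excludes $\theta_M-\theta_m=\pi$ and hence gives $e^{-2\theta_M i}\neq e^{-2\theta_m i}$, letting $j\to\infty$ forces $\lim_{j\to\infty}\bigl(F_1(0)+F_2(0)+F_{3j}(0)\bigr)=0$.

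It then remains to read off the boundary values. Since $v-w\in H^2(S_h\times(-M,M))$ is continuous and $v=w$ on $\Gamma_h^\pm\times(-M,M)$, passing to the edge gives $(v-w)(0,x_3)=0$, so $F_1(0)=\int_{-L}^{L}\psi''(x_3)(v-w)(0,x_3)\,\mathrm dx_3=0$; hence the relation of the previous paragraph reads $\lim_{j\to\infty}\mathcal R(v_j)(0)=F_2(0)/k^2=\mathcal R(qw)(0)$. Likewise $\mathcal R(v-w)\in C^\alpha(\overline S_h)$ (Lemma~\ref{lem h2}) vanishes on $\Gamma_h^\pm$ and so at the vertex, giving $\mathcal R(v-w)(0)=0$. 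Writing $q=1+V$, so that $\mathcal R(Vw)=\mathcal R(qw)-\mathcal R(w)$ and $\mathcal R(w)=\mathcal R(v)-\mathcal R(v-w)$, and using that $\mathcal R(qw)\in C^\alpha(\overline S_h)$ is continuous at $0$, I would assemble the chain $\lim_{\rho\to+0}\langle\mathcal R(Vw)\rangle_\rho=\mathcal R(qw)(0)-\lim_{\rho\to+0}\langle\mathcal R(w)\rangle_\rho$, where $\langle\,\cdot\,\rangle_\rho$ denotes the average over $B((0,x_3^c),\rho)\cap P(\Omega)$; then $\lim_\rho\langle\mathcal R(w)\rangle_\rho=\lim_\rho\langle\mathcal R(v)\rangle_\rho-\mathcal R(v-w)(0)=\lim_\rho\langle\mathcal R(v)\rangle_\rho$, and finally $\lim_\rho\langle\mathcal R(v)\rangle_\rho=\lim_{j\to\infty}\mathcal R(v_j)(0)=\mathcal R(qw)(0)$, so the two terms cancel and $\lim_\rho\langle\mathcal R(Vw)\rangle_\rho=0$, which is the assertion.

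The main obstacle I anticipate is the last identification $\lim_{\rho\to+0}\langle\mathcal R(v)\rangle_\rho=\lim_{j\to\infty}\mathcal R(v_j)(0)$. Here $\mathcal R(v)$ is merely $H^1(S_h)$ (Lemma~\ref{lem h2}), hence need not be continuous at the vertex, so the small-ball averages of $\mathcal R(v_j)$ and $\mathcal R(v)$ cannot be compared uniformly in $\rho$ because $m(B((0,x_3^c),\rho)\cap P(\Omega))^{-1/2}$ blows up as $\rho\to0$. This interchange of the $j\to\infty$ and $\rho\to+0$ limits is handled exactly as in the proof of Corollary~\ref{cor:2.1} via the splitting \eqref{eq:250}: each Herglotz $\mathcal R(v_j)$ is smooth, so its average reproduces $\mathcal R(v_j)(0)$, while the $L^2(S_h)$-convergence $\mathcal R(v_j)\to\mathcal R(v)$ supplied by Lemma~\ref{lem h2} and \eqref{3eq:ass1 int} controls the remainder. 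The rest is the routine bookkeeping of the $j^2$-scaling against the $O(j^{-\alpha})$, $O(j^{-(\alpha-\varrho)})$ and $O(j^{-\Upsilon})$ estimates already recorded in Lemmas~\ref{lem:int2} and \ref{lem:3 delta}.
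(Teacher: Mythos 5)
Your proposal is correct and follows essentially the same route as the paper's own proof: reduce via Lemma \ref{lem:32} and Lemma \ref{lem:int2} with $\eta\equiv 0$ to the identity \eqref{int3eq:int identy}, use $F_1(0)=0$, multiply by $j^2$ with the estimates \eqref{3eq:deltaf1j}--\eqref{3eq:I3}, \eqref{3eq:deltajnew3} (upgraded by \eqref{3eq:ass1 int}) and \eqref{eq:u0w}, \eqref{eq:1.5} to obtain $\lim_{j\to\infty}\mathcal R(v_j)(0)=\mathcal R(qw)(0)$, and then conclude by the average decomposition at the vertex. Your added justification that the averages of $\mathcal R(v)$ and $\mathcal R(w)$ coincide at $0$ (via $\mathcal R(v-w)\in C^\alpha(\overline S_h)$ vanishing on $\Gamma_h^\pm$) is exactly the content the paper invokes through \eqref{3eq:bound1}, so no substantive difference remains.
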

\begin{proof} {\bl It is clear that the transmission eigenfunctions $v \in H^1(\Omega )$ and $w \in H^1(\Omega )$ to \eqref{eq:in eig reduce} fulfill \eqref{eq:3d in eig}   for $\eta \equiv 0$. Using the dimensional reduction operator $\mathcal R$ given by \eqref{dim redu op}, we can show that $\mathcal R(v)$ and $\mathcal R(w)$ satisfy \eqref{eq:eig reduc}  for $\eta \equiv 0$. By virtue of Lemmas \ref{lem:32} and \ref{lem:int2},  due to} $ \eta(x) \equiv 0 $, from \eqref{3eq:int identy} we have the following integral equality
\begin{align}\label{int3eq:int identy}
	&(F_{1 } (0)+F_{2 } (0)+F_{3j } (0)) \int_{S_h} u_0(sx') {\rm d} x'+\delta_j (s)\\
	 &= I_3-\int_{S_h}  \delta  F_{1} (x') u_0(sx') {\rm d} x' - \int_{S_h} \delta  F_{2} (x')u_0(sx') {\rm d} x' -\int_{S_h} \delta  F_{3j} (x')u_0(sx') {\rm d} x' .  \notag 
\end{align}
where $\delta_j(s)$ is defined in \eqref{eq:313delta},  $\delta F_1(x')$,  $\delta F_2(x')$ and  $\delta F_{3j}(x')$ are defined in \eqref{eq:326F3j}, $I_3$ is given in  \eqref{3eq:int identy}. Since $v=w$ on $\Gamma_h^\pm \times (-M,M)$, it is easy to see that
$$
F_{1} (0)=
	\int_{-L}^{L}\psi''(x_3)(v(0, x_3)-w(0, x_3)) {\rm d} x_3=0. 
$$
Therefore, using \eqref{eq:u0w}, from \eqref{int3eq:int identy}, we deduce that
\begin{align}\label{int3eq:int identy1}
	& 6 i  (F_{2 } (0)+F_{3j } (0)) (e^{-2\theta_M i }-e^{-2\theta_m i }  ) s^{-2}- (F_{2 } (0)+F_{3j } (0)) \int_{W \backslash S_h} u_0(sx') {\rm d} x'\\
	 &= I_3-\int_{S_h}  \delta  F_{1} (x') u_0(sx') {\rm d} x' - \int_{S_h} \delta  F_{2} (x')u_0(sx') {\rm d} x' -\int_{S_h} \delta  F_{3j} (x')u_0(sx') {\rm d} x' -\delta_j (s) .  \notag 
\end{align}
In \eqref{int3eq:int identy1}, we take $s=j$ and multiply $j^2$ on the both sides of \eqref{int3eq:int identy1}, then we have
\begin{align}\label{int3eq:int identy2}
	& 6 i  (F_{2 } (0)+F_{3j } (0)) (e^{-2\theta_M i }-e^{-2\theta_m i }  ) =j^2\Big [ I_3 + (F_{2 } (0)+F_{3j } (0)) \int_{W \backslash S_h} u_0(sx') {\rm d} x' \notag \\
	 &\quad -\int_{S_h}  \delta  F_{1} (x') u_0(sx') {\rm d} x' - \int_{S_h} \delta  F_{2} (x')u_0(sx') {\rm d} x' -\int_{S_h} \delta  F_{3j} (x')u_0(sx') {\rm d} x' -\delta_j (s) \Big].  
\end{align}

{\bl Using  \eqref{3eq:deltajnew3} in Lemma \ref{lem:3 delta} and}  under the assumption \eqref{3eq:ass1 int}, it is easy to see that
\begin{equation}\label{eq:deltajnew int}
	j^2 |\delta_j(s)| \leq    \frac{ |k|^2 \|\psi \|_{L^\infty}  \sqrt{C(L,h)  ( \theta_M-\theta_m )}  e^{-\sqrt{s \Theta } \delta_W } h  }{\sqrt 2 }   j^{-\Upsilon }, 
\end{equation} 
where  $C(L,h)$ is a positive number defined in \eqref{eq:312},  $ \Theta  \in [0,h ]$ and $\delta_W$ is defined in \eqref{eq:xalpha}.  

Under the assumption \eqref{3eq:ass1 int}, by virtue of  \eqref{3eq:deltaf1j}, \eqref{3eq:deltaf1}, \eqref{3eq:deltaf2} and \eqref{3eq:I3} {\bl in Lemma \ref{lem:int2},}  we can obtain the following estimates
\begin{align}\label{eq:360est}
	&j^2 \left|\int_{S_h}  \delta F_{3j  } (x) u_0(sx') {\rm d} x' \right | \leq \frac{8 L\sqrt{\pi}\|\psi\|_{L^\infty}(\theta_M- \theta_m) \Gamma(2 \alpha+4) }{ \delta_W^{2\alpha+4   }  } k^2 {\rm diam}(S_h)^{1-\alpha }\nonumber \\
&\hspace{5cm} \times   (1+k) \|g_j\|_{L^2( {\mathbb S}^{2})}   j^{-\alpha  } \leq \Oh \left (j^{-(\alpha-\varrho )} \right  ),  \nonumber \\
	& j^2 \left|\int_{S_h}  \delta F_{1 } (x) u_0(sx') {\rm d} x' \right | \leq  \frac{2\|F_{1}\|_{C^\alpha } (\theta_M-\theta_m )\Gamma(2\alpha+4) }{ \delta_W^{2\alpha+4}} j^{-\alpha}  , \nonumber \\
	&j^2	\left|\int_{S_h}  \delta F_{2 } (x) u_0(sx') {\rm d} x' \right | \leq \frac{2\|F_{2}\|_{C^\alpha } (\theta_M-\theta_m )\Gamma(2\alpha+4) }{ \delta_W^{2\alpha+4}} j^{-\alpha}.
\end{align}

%

Under the  assumption \eqref{3eq:ass2 int}, it is easy to see that
$$
\left| e^{-2\theta_M i }-e^{-2\theta_m i }  \right| =\left|1-e^{-2(\theta_M -\theta_m) i } \right| \neq 0,
$$
since $\theta_M -\theta_m 
\neq \pi$. 
In \eqref{int3eq:int identy2}, by letting $j\rightarrow \infty$, from \eqref{eq:1.5}, \eqref{eq:deltajnew int} and  \eqref{eq:360est}, we prove that
$$
\lim_{j \rightarrow \infty} F_{3j}(0) =- F_2(0), 
$$ 
which implies
\begin{equation}\label{eq:cor31}
	\lim_{j \rightarrow \infty} {\mathcal R }(v_j)(0) ={\mathcal R }(qw)(0)
\end{equation}
through recalling that $F_2$ and $F_{3j}$ are given in \eqref{eq:F1xi}.  
From \eqref{3eq:bound1}, we have
{\bl 
\begin{equation*}
\begin{split}
	& \lim_{ \rho \rightarrow +0 }\frac{1}{m(B(0, \rho  ) \cap P(\Omega ))} \int_{B(0, \rho ) \cap P(\Omega )}  {\mathcal R }(v)(x')  {\rm d} x '  \\
	 &= \lim_{ \rho \rightarrow +0 }\frac{1}{m(B(0, \rho  ) \cap P(\Omega ))} \int_{B(0, \rho ) \cap P(\Omega )} {\mathcal R } ( w)(x')  {\rm d} x'. 
	 \end{split} 
\end{equation*} } 
Since {\bl
\begin{align*}
	\lim_{j \rightarrow \infty} {\mathcal R }( v_j)(0)&=\lim_{j \rightarrow \infty}  \lim_{ \rho \rightarrow +0 }\frac{1}{m(B(0, \rho  )  \cap P(\Omega ) )} \int_{B(0, \rho ) \cap P(\Omega )} {\mathcal R } (v_j)(x')  {\rm d} x'\\
	&= \lim_{ \rho \rightarrow +0 }\frac{1}{m(B(0, \rho  ) \cap P(\Omega ))} \int_{B(0, \rho ) \cap P(\Omega )}{\mathcal R } (v )(x')  {\rm d} x',\\
	{\mathcal R } (qw )(0)&= \lim_{ \rho \rightarrow +0 }\frac{1}{m(B(0, \rho  ) \cap P(\Omega ))} \int_{B(0, \rho ) \cap P(\Omega )} {\mathcal R}(qw)(x')  {\rm d} x',
\end{align*}}
and from \eqref{eq:cor31}, we finish the proof of this corollary. 
\end{proof}

%
%
%

{\color{black}
\begin{remark}
Corollary \ref{cor:3.1} states that the average value of the function $Vw$ over the cylinder centered at the edge point $(0,x_3^c)$ with the height $L$ vanishes in the distribution sense. In addition, if $V(x',x_3)$ is continuous near the edge point $(0,x_3^c)$ where $x_3^c \in (-M,M)$ and $ V(0,x_3^c ) \neq 0$, from the dominant convergence theorem	and the definition of the dimension reduction operator $\mathcal R$, we can prove that
$$
{\bl 
	\lim_{ \rho \rightarrow +0 }\frac{1}{m(B(0, \rho  )\cap P(\Omega ) )} \int_{B(0, \rho ) \cap P(\Omega) } \int_{x_3^c-L}^{x_3^c+L} \psi(x_3)  w (x',x_3)  {\rm d} x'{\rm d} x_3 =0 } 
	$$
under the assumptions  in  Corollary \ref{cor:3.1}, which also describes the vanishing property of the interior eigenfunctions $v$ and $w$ near the edge point in 3D. Furthermore, if $ \psi(x_3^c)\neq 0 $, one can prove that
$$
{\bl 	\lim_{ \rho \rightarrow +0 }\frac{1}{m(B(0, \rho  ) \cap P(\Omega) )} \int_{B(0, \rho ) \cap P(\Omega) } \int_{x_3^c-L}^{x_3^c+L}  w (x',x_3)  {\rm d} x'{\rm d} x_3 =0. } 
	$$

%
%

\end{remark}
}

In the following theorem, we impose a stronger regularity requirement for  the conductive transmission eigenfunction $v$ of \eqref{eq:3d in eig}, i.e., $v$ has $H^2$-regularity near the considering edge point. Using the dimension reduction operator given in Definition \ref{Def}, as well as the H{\"o}lder continuity of the considering functions, we can prove the following theorem in a similar way of proving Theorem \ref{Th:1.2}. The detailed proof of Theorem \ref{Th:3.2} is omitted here. 



\begin{theorem}\label{Th:3.2}
	{\bl Let $\Omega \Subset \mathbb R^3$ be bounded domain with $0\in \partial \Omega$ and} ${S_h}\subset \R^{2}$ be defined in \eqref{eq:sh}, $M>0$, $0<\alpha<1$. For any fixed {\bl $x_3^c \in (-M,M)$} and $L>0$ defined in Definition \ref{Def}, we suppose that  $L $ is sufficiently small such that $(x_3^c-L,x_3^c+L) \subset (-M,M) $ {\bl and
$$
(B_h \times (-M,M) ) \cap \Omega=S_h \times (-M,M),
$$
where $B_h\Subset \mathbb R^2 $ is the central ball of radius $h \in \mathbb R_+$.} Let $v \in H^2 (\Omega )$ and $w \in H^1 (\Omega ) $ be the eigenfunctions to \eqref{eq:3d in eig}.  Moreover,  there exits a sufficiently smaller neighbourhood {\bl $S_h \times (-M,M)$ (i.e. $h>0$ is sufficiently  small) of $(0,x_3^c)$ with $x_3^c\in (-M,M)$}, such that  $q w\in C^\alpha(\overline {S}_h   \times [-M,M ] ) $ and $\eta \in C^\alpha(\overline{\Gamma}_h^\pm  \times [-M,M]  )$  for $0< \alpha  <1$ and $v-w\in H^2({{S}_h}\times(-M,M))$. Under the following assumptions: 
	\begin{itemize}
	\item[(a)]  the function $\eta=\eta(x',x_3)$ is independent  of $x_3$ and does not vanish on the edge of  $ W \times (-M,M)$, i.e.,    
	\begin{equation*}
		\eta(0 ) \neq 0, 
	\end{equation*}
 \item[(b)]  the angles $\theta_m$ and $\theta_M$ of $S_h$ satisfy
	\begin{equation*}
-\pi < \theta_m < \theta_M < \pi \mbox{ and } \theta_M-\theta_m \neq \pi,	
\end{equation*}
	\end{itemize}
	 	then we have $v$ and $w$ vanish at the edge point $(0,x_3^c) \in \R^3$ of $S_h \times (-M,M)$, where $x_3^c \in (-M,M)$. 
\end{theorem}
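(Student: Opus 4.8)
The plan is to follow the scheme of Theorem~\ref{Th:1.2}, after first reducing the three-dimensional problem to a planar one through the dimension reduction operator $\mathcal R$ of Definition~\ref{Def}. As in the proof of Lemma~\ref{lem:32} I would take $x_3^c=0$ and note that the eigenfunctions $v\in H^2(\Omega)$ and $w\in H^1(\Omega)$ of \eqref{eq:in eig} satisfy the reduced system \eqref{eq:eig reduc} and, by Lemma~\ref{lem:32}, the integral identity \eqref{eq:36 int}, with $F_1,F_2,F_3$ given by \eqref{eq:F1xi}. Because $v\in H^2$ and $v-w\in H^2(S_h\times(-M,M))$, Lemma~\ref{lem h2} yields $\mathcal R(v),F_1,F_2,F_3\in C^\alpha(\overline S_h)$; in particular $\mathcal R(v)$ is H\"older continuous at the vertex, so that $\mathcal R(v)$ and $\eta$ admit the vertex expansions $\mathcal R(v)(x')=\mathcal R(v)(0)+\delta\mathcal R(v)(x')$ with $|\delta\mathcal R(v)(x')|\le\|\mathcal R(v)\|_{C^\alpha}|x'|^\alpha$, and \eqref{3eq:eta}, exactly as in \eqref{eq:splitting}.

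Substituting these splittings into \eqref{eq:36 int} and isolating the principal contributions, I would argue exactly as in \eqref{eq:1.57}--\eqref{eq:1.57new}. The volume term $(F_1(0)+F_2(0)+F_3(0))\int_{S_h}u_0(sx')\,\rmd x'$ is $\Oh(s^{-2})$ by \eqref{eq:u0w} and \eqref{eq:1.5}; the volume remainders $\int_{S_h}\delta F_i\,u_0(sx')\,\rmd x'$ are $\Oh(s^{-\alpha-2})$ by \eqref{eq:xalpha}; the term $I_3$ is exponentially small by \eqref{eq:I3}, since $\mathcal R(v-w)\in H^2(S_h)$ by Lemma~\ref{lem h2}; and the boundary remainders involving $\delta\mathcal R(v)$ and $\delta\eta$ are $\Oh(s^{-\alpha-1})$ by Lemma~\ref{lem:zeta}. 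The only surviving contribution is the leading boundary term $\eta(0)\mathcal R(v)(0)\int_{\Gamma_h^\pm}u_0(sx')\,\rmd\sigma$, whose $s^{-1}$ asymptotics are furnished by \eqref{eq:I311} in Lemma~\ref{lem:u0 int}. Multiplying through by $s$ and letting $s\to+\infty$ therefore gives
\[
\eta(0)\big(\mu(\theta_m)^{-2}+\mu(\theta_M)^{-2}\big)\mathcal R(v)(0)=0,
\]
with $\mu$ as in \eqref{eq:omegamu}. By assumption (b) and Lemma~\ref{lem:29} one has $\mu(\theta_m)^{-2}+\mu(\theta_M)^{-2}\neq0$, while $\eta(0)\neq0$ by assumption (a); hence $\mathcal R(v)(0)=0$.

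It remains to recover the pointwise value from the reduced one. Since $\mathcal R(v)(0)=\int_{-L}^{L}\psi(x_3)v(0,x_3)\,\rmd x_3=0$ and $x_3\mapsto v(0,x_3)$ is continuous (the three-dimensional embedding $H^2\hookrightarrow C^0$ applies to $v$), the non-negativity of $\psi$ lets me invoke the integral mean value theorem to write $\mathcal R(v)(0)=C(\psi)v(0,\xi)$ for some $\xi\in(-L,L)$, with $C(\psi)=\int_{-L}^{L}\psi\,\rmd x_3>0$ as in \eqref{eq:329 cpsi}. Thus $v(0,\xi)=0$; as the whole argument holds for every sufficiently small $L$, letting $L\to0$ forces $\xi\to0$ and, by continuity, $v$ vanishes at the edge point $(0,x_3^c)$. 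Finally, the transmission condition $w=v$ on $\Gamma_h^\pm\times(-M,M)$ together with the continuity of $v$ gives $w(0,x_3^c)=v(0,x_3^c)=0$ in the trace sense.

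The main obstacle is this last step: extracting the genuine pointwise value at the edge from the $x_3$-averaged quantity $\mathcal R(v)(0)$ delivered by the dimension reduction. This is precisely where the non-negativity of $\psi$ built into Definition~\ref{Def} and the integral mean value theorem are indispensable, and it relies on the $H^2$-regularity of $v$ to guarantee continuity along the edge. By comparison the asymptotic bookkeeping is routine and lighter than in Theorem~\ref{Th:3.1}, because the $H^2$-regularity produces the $C^\alpha$ vertex expansion of $\mathcal R(v)$ directly and removes any need for the Herglotz approximation and the spherical-Bessel/Jacobi--Anger expansions used there.
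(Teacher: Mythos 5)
Your proposal follows exactly the route the paper intends: the paper in fact \emph{omits} the proof of Theorem~\ref{Th:3.2}, stating only that it can be obtained ``in a similar way of proving Theorem~\ref{Th:1.2}'' by means of the dimension reduction operator of Definition~\ref{Def}, and that is precisely what you execute. Your bookkeeping is correct: since $v\in H^2$ near the edge, Lemma~\ref{lem h2} gives $\mathcal R(v)\in H^2(S_h)\subset C^\alpha(\overline S_h)$, so the Herglotz approximation and the spherical-Bessel/Jacobi--Anger machinery of Theorem~\ref{Th:3.1} are indeed unnecessary; the leading boundary coefficient is then $2\eta(0)\bigl(\mu(\theta_m)^{-2}+\mu(\theta_M)^{-2}\bigr)$ exactly as in the 2D case (rather than the weighted coefficients $C(I_{311}^\pm)$ of Lemma~\ref{lem:37 coeff}), Lemma~\ref{lem:29} applies directly, and all remainder terms vanish after multiplication by $s$, yielding $\mathcal R(v)(0)=0$.

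One step, as written, is flawed: the first integral mean value theorem, $\mathcal R(v)(0)=C(\psi)\,v(0,\xi)$, is valid only for \emph{real-valued} continuous integrands, whereas the transmission eigenfunctions of this non-self-adjoint problem are complex-valued in general. The conclusion is easily recovered: either apply the mean value theorem separately to $\mathrm{Re}\, v(0,\cdot)$ and $\mathrm{Im}\, v(0,\cdot)$, obtaining two intermediate points $\xi_1,\xi_2$ that both tend to $x_3^c$ as $L\to 0$, or avoid it altogether by observing that $\mathcal R(v)(0)=0$ holds for \emph{every} sufficiently small $L$ and every admissible $\psi$, so that after normalizing $\psi$ to unit mass the family becomes an approximate identity and $v(0,x_3^c)=\lim_{L\to 0}C(\psi)^{-1}\int\psi(x_3)v(0,x_3)\,\rmd x_3=0$ follows directly from the continuity of $v$ (by the embedding $H^2\hookrightarrow C^{0,1/2}$ in three dimensions). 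With this repair your argument is complete; the statement for $w$, which lies only in $H^1$, then holds in the trace sense through $w=v$ on $\Gamma_h^\pm\times(-M,M)$, which matches the level of rigor implicit in the paper's own Theorem~\ref{Th:1.2}.
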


\begin{remark}
	When $\eta \equiv 0$ near the edge point, under the $H^2$ regularity of the interior transmission eigenfunctions $v$ and $w$, the vanishing property of $v$ and $w$ is investigated in \cite{Bsource}. 
\end{remark}

\section{Unique recovery results for the inverse scattering problem}\label{sec:4}
{\bl 
In this section, we  apply the vanishing property of the conductive transmission eigenfunctions at a corner in 2D to investigate the unique recovery in the 
inverse problem associated with the corresponding  conductive scattering problem. Before that, we first describe the relevant physical background. 

The time-harmonic electromagnetic wave scattering from a conductive medium body arises in  the application of practical importance, for example the modeling of an electromagnetic object coated with a thin layer of a highly conducting material.  

In what follows, we let $\varepsilon$, $\mu$ and $\sigma$ denote the electric permittivity, the magnetic permeability and  the conductivity of a medium, respectively.
Let $\Omega$ be a bounded Lipschitz domain in $\mathbb{R}^2$ with a connected complement $\mathbb{R}^2\backslash\overline{\Omega}$. Consider a cylinder-like medium body $D:=\Omega\times\mathbb{R}$ in $\mathbb{R}^3$ with the cross section being $\Omega$ along the $x_3$-axis for ${x}=(x_j)_{j=1}^3\in D$. In the following discussions, with a bit abuse of notation, we shall also use ${x}=(x_1, x_2)$ in the 2D case, which should be clear from the context. Let $\Omega_\delta:=\{ {x}+h\nu({x}); {x}\in\partial\Omega\ \mbox{and}\ h\in (0, \delta)\}$ with $\delta\in\mathbb{R}_+$ being sufficiently small, where $\nu\in\mathbb{S}^1$ signifies the exterior unit normal vector to $\partial\Omega$. Set $D_\delta=\Omega_\delta\times\mathbb{R}$ to denote a layer of thickness $\delta$ coated on the medium body $D$. The material configuration associated with the above medium structure is given as follows:
\begin{equation}\label{eq:m1}
\varepsilon, \mu, \sigma=\varepsilon_1, \mu_0, \sigma_1\ \mbox{in}\ \ D;\ \varepsilon_2, \mu_0, \frac{\gamma}{\delta}\ \ \mbox{in}\ D_\delta;\ \varepsilon_0, \mu_0, 0\ \ \mbox{in}\ \mathbb{R}^3\backslash\overline{(D\cup D_\delta)},
\end{equation}
where $\varepsilon_j>0$, $j=0,1,2$, $\mu_0, \gamma>0$  and $\sigma_1\geq0$ are all constants. Consider a time-harmonic incidence: 
\begin{equation}\label{eq:inc1}
\nabla\times {E}^i-i\omega\mu_0 {H}^i=0,\quad \nabla\times {H}^i+i\omega\varepsilon_0 {E}^i=0\quad\mbox{in}\ \mathbb{R}^3,
\end{equation}
where $i:=\sqrt{-1}$, ${E}^i$ and ${H}^i$ are respectively the electric and magnetic fields and $\omega\in\mathbb{R}_+$ is the angular frequency. 
The electromagnetic scattering is generated by the impingement of the incident field $({E}^i, {H}^i)$ on the medium body described in \eqref{eq:m1} as follows
\begin{equation}\label{eq:sca1}
\begin{cases}
& \nabla\times {E}-i\omega\mu {H}=0,\quad \nabla\times {H}+i\omega\varepsilon {E}=\sigma {E}, \quad\mbox{in}\ \mathbb{R}^3, \\[5pt]
& {E}={E}^i+{E}^s,\quad {H}= {H}^i+ {H}^s,\hspace*{2.55cm}\mbox{in}\ \mathbb{R}^3, \\[5pt]
& \displaystyle{\lim_{r\rightarrow\infty} r\left({H}^s\wedge\hat{{x}}-{E}^s \right)=0, }\hspace*{3.5cm} r:=|{x}|, \hat{{x}}:={x}/|{x}|,
\end{cases} 
\end{equation}
where 
the tangential components of the electric field ${E}$ and the magnetic field ${H}$ are continuous across the material interfaces $\partial D$ and $\partial D_\delta$. The last limit in \eqref{eq:sca1} is known as the Silver-M\"uller radiation condition. 

Under the transverse-magnetic (TM) polarisation, namely,
\begin{align}\notag
{E}^i&=\begin{bmatrix}
0\\0\\ u^i(x_1,x_2)
\end{bmatrix},\  {H}^i=\begin{bmatrix}
H_1(x_1,x_2)\\
H_2(x_1,x_2)\\
0
\end{bmatrix}, 
\end{align}
and
\begin{align}
{E}&=\begin{bmatrix}
0\\0\\ u(x_1,x_2)
\end{bmatrix},\  {H}=\begin{bmatrix}
H_1(x_1,x_2)\\
H_2(x_1,x_2)\\
0
\end{bmatrix}, \label{eq:p2}
\end{align}
it is rigorously verified in \cite{BonT} that as $\delta\rightarrow +0$, one has
	\begin{equation}\label{eq:model}
	\begin{cases}
	\Delta u^-+k^2 q u^-=0 & \mbox{ in }\ \Omega, \\[5pt] 
	\Delta u^+ +k^2  u^+=0 & \mbox{ in }\ \R^2 \backslash \Omega, \\[5pt] 
	u^+= u^-,\quad \partial_\nu u^+ + \eta  u^+=\partial_\nu u^- & \mbox{ on }\ \partial \Omega, \\[5pt]
	u^+=u^i+u^s & \mbox{ in }\ \R^2 \backslash \Omega, \\[5pt] 
	\lim\limits_{r \rightarrow \infty} r^{1/2} \left( \partial_r u^s-i k u^s \right)=0, & \ r=|\mathbf x|,
	\end{cases}
	\end{equation}
where 
\begin{equation}\label{eq:model12}
\mbox{$u^-=u|_{\Omega}$, $u^+=u|_{\mathbb{R}^2\backslash\overline{\Omega}}$\ \ and\ \ $k=\omega\sqrt{\varepsilon_0\mu_0}$, $\eta=i\omega\gamma\mu_0$}.
\end{equation}
The last limit in \eqref{eq:model} is known as the Sommerfeld radiation condition. According to \eqref{eq:m1}, as $\delta\rightarrow+0$, it is clear that the conductivity in the thin layer $D_\delta$ goes to infinity, or equivalently, its resistivity goes to zero. This in general would lead to the so-called perfectly electric conducting (PEC) boundary, which prevents the electric field from penetrating inside the medium body and instead generates a certain boundary current. However, it is noted in our case that the thickness of the coating layer $D_\delta$ also goes to zero, and this allows the electromagnetic waves to penetrate inside the medium body. Nevertheless, the thin highly-conducting layer effectively produces a transmission  boundary condition across the material interface $\partial D$ involving a conductive parameter $\eta$, which is referred to as the conductive transmission condition. It is known to us that a perfect conductor does not exist in nature, and hence the conductive medium body provides a more realistic means to model the electromagnetic scattering from an object coated with a thin layer of a highly conducting material; see \cite{HM, Senior} more relevant discussion about this aspect. }


 The well-posedness of the direct problem \eqref{eq:model} is known (cf. \cite{Bon}), and there exists a unique solution {\bl $u:=u^-\chi_\Omega+u^+\chi_{\mathbb{R}^2 \backslash\Omega}\in H_{loc}^1(\mathbb{R}^2)$.}  Moreover, there holds the following asymptotic expansion
{\bl  \begin{equation}\notag
 u^s(x)=\frac{e^{ik|x|}}{|x|^{1/2}}u^{\infty}(\hat{x})+\Oh \left(\frac{1}{|x|}\right), |x|\rightarrow +\infty
 \end{equation}
 uniformly in all directions $\hat{x}=x/|x| \in {\mathbb S}^{1}$.} The real-analytic function $u^{\infty}(\hat{x})$ is referred to as the \emph{far-field pattern} or the \emph{scattering amplitude} associated with $u^i$. The inverse scattering problem is concerned with the recovery of the scatterer $(\Omega; q, \eta)$ by knowledge of the far-field pattern $u^{\infty}(\hat{x}; u^i)$; that is
 \begin{equation}\label{eq:ip1}
 u^{\infty}(\hat{x}; u^i)\rightarrow(\Omega; q, \eta).
 \end{equation}
 In \eqref{eq:ip1}, if the far-field pattern is given corresponding to a single incident wave $u^i$, then it is referred to as a single far-field measurement, otherwise it is referred to as many far-field measurements. It is known that the inverse problem \eqref{eq:ip1} is nonlinear and ill-conditioned. 
For the reconstruction of the shape of the scatterer  $\Omega$ by using  the factorization method for \eqref{eq:ip1}, uniqueness issue has been studied in \cite{Bon}. The inverse spectral problem of gaining the information about the material properties associated to the conductive transmission eigenvalue problem has been studied in \cite{BHK}. In \cite{HK}, the method of uniquely recovering the conductive boundary parameter $\eta$ from the measured scattering data as well as the convergence of the conductive transmission eigenvalues as the conductivity parameters which tend to zero has also been studied. In all of the aforementioned literatures, the unique determination results are based on the far-field patterns of all incident plane waves at a fixed frequency, which means that infinitely many far-field measurements have been used. In what follows, we show that in a rather general and practical scenario, the polyhedral shape of the scatterer, namely $\Omega$, can be uniquely recovered by a single far-field measurement without knowing its material contents, namely $q$ and $\eta$. Moreover, if the surface conductive parameter $\eta$ is constant, then it can be recovered as well. 

 {{Our main unique recovery results for the inverse scattering problem \eqref{eq:ip1} are contained in Theorems \ref{Th:4.1} and \ref{Th: unique eta}. In Theorem \ref{Th:4.1}, we establish the unique recovery results by a single far-field measurement in determining a 2D polygonal conductive scatterer without knowing its contents. In Theorem \ref{Th: unique eta}, the surface conductive parameter $\eta$ of the scatterer can be further recovered if it is a constant.}} Before presenting the main results, we first show in Proposition \ref{Pro} that the conductive parameter $\eta$ in \eqref{eq:model} has a close relationship with the wave number $k$ from the practical point view of the TM-mode (transverse magnetic) for the time-harmonic Maxwell system \cite{Ang}.  This relationship helps us to show that our assumption in Theorem \ref{Th:4.1} can be fulfilled when the wave number $k$ is sufficiently small.

{\bl 
In view of \eqref{eq:model12}, we readily have the following observation. 
\begin{remark}\label{Pro}
	The conductive boundary parameter $\eta$ of \eqref{eq:model} satisfies 
\begin{equation}\label{eq:condn1}
	\eta= i\omega\gamma\mu_0 =\Oh( k ),
\end{equation}
where $k:=\omega \sqrt{\varepsilon_0 \mu_0  }$ is the wave number in \eqref{eq:model}. 
\end{remark} } 

Remark~\ref{Pro} basically indicates that when considering the conductive scattering problem \eqref{eq:model}, one may impose the
low-frequency dependence behaviour \eqref{eq:condn1} on the surface conductive parameter. As remarked earlier, Remark~\ref{Pro} only considers the simple model
\eqref{eq:sca1} for illustration of the low-frequency behaviour \eqref{eq:condn1}. For more complex Maxwell models, one can derive the conductive scattering system \eqref{eq:model} of a general form. 

We are in a position to consider the inverse problem \eqref{eq:ip1}. First, we introduce the admissible class of conductive scatterers in our study. {\bl  Let $W_{x_c}(\theta_W)$ be an open sector in $\mathbb R^2$ with the vertex $x_c$ and the open angle $\theta_W $. Denote
	\begin{equation}\label{eq:thm21}
	\begin{split}
		\Gamma_h^\pm(x_c)&: =\partial W_{x_c} (\theta_W )  \cap B_h(x_c),\quad S_{h} (x_c):=  \Omega\cap B_{h} (x_c)= \Omega \cap W_{x_c} (\theta_W ), \\
		S_{h/2} (x_c)&:=  \Omega\cap B_{h/2} (x_c)= \Omega \cap W_{x_c} (\theta_W ),\quad\Sigma_{\Lambda_h}(x_c):=S_{h}(x_c)\backslash S_{h/2} (x_c). 
	\end{split}
	\end{equation}} 

\begin{definition}~\label{def:adm}
Let {\bl $(\Omega;k,d, q, \eta)$}  be a conductive scatterer associated with {\bl the incident plane wave $u^i=e^{i kx\cdot d}$ with $d\in\mathbb{S}^{1}$ and $k\in \mathbb R_+$. Consider} the scattering problem \eqref{eq:model} and $u$ is  the total wave field therein. 
The scatterer is said to be admissible if it fulfils the following conditions:
\begin{itemize}
\item[(a)]  $\Omega$ is a bounded {\bl simply connected}  Lipschitz domain in $\mathbb{R}^2$, and $q\in L^\infty(\Omega)$, $\eta\in L^\infty(\partial\Omega)$. 

\item[(b)] Following the notations in Theorem \ref{Th:1.1}, if $\Omega$ possesses a corner {\bl $B_h(x_c) \cap \Omega= \Omega\cap W_{x_c}(\theta_W )$ where $x_c$ is the vertex of the sector $W_{x_c}(\theta_W )$ and the open angle $\theta_W$ of $  W_{x_c}(\theta_W )  $ satisfies $\theta_W \neq \pi $,  then $q \big |_{S_h(x_c)} $ is a constant,  $\eta\in C^\alpha(\overline{\Gamma_h^\pm(x_c}))$, where $S_{h}(x_c) $ and $\Gamma_h^\pm (x_c)$ are defined in \eqref{eq:thm21}.}    


\item[(c)] The total wave field $u$ is non-vanishing everywhere in the sense that for any $x\in\mathbb{R}^2$,
\begin{equation}\label{eq:nn2}
	\lim_{ \rho \rightarrow +0 }\frac{1}{m(B(x, \rho  ))} \int_{B(x, \rho )} |u(x)|  {\rm d} x\neq 0. 
	\end{equation}
\end{itemize}
\end{definition}

We would like to point out that the conditions stated in Definition~\ref{def:adm} can be fulfilled by the conductive scatterer $(\Omega; k,d, q, \eta)$ and the scattering problem \eqref{eq:model} in certain general and practical scenarios. 
In particular, the condition \eqref{eq:nn2} in (c) can be fulfilled at least when $k$ is sufficiently small. In fact, it has been shown in Proposition \ref{Pro} that if $\eta\neq 0$, then $\eta = \Oh (k)$ .  For the scattered field $u^s$ of \eqref{eq:model}, from \cite[Theorem 2.4]{Bon}, it is proved that
		$$
	\|u^s\|_{H^1(B)} \leq C( \|\eta u^i\|_{H^{-1/2}   ( \partial \Omega  ) }+k^2 \|q u^i \|_{L^2(\Omega  )} ) =\Oh( k  ) \|u^i\|_{L^2(\Omega)},
	$$
	where $C$ is a positive number and $B$ is a large ball containing $\Omega$. Hence, if the incident field $u^i$ is non-vanishing everywhere, say $u^i=e^{i kx\cdot d}$ with $d\in\mathbb{S}^{1}$ being a plane wave, and $k$ is sufficiently small, then \eqref{eq:nn2} is obviously fulfilled. Nevertheless, by Definition~\ref{def:adm}, we may include more general situations into our subsequent study of the inverse problem \eqref{eq:ip1}.

	\begin{figure}
  \centering
  \includegraphics[width=0.35\textwidth]{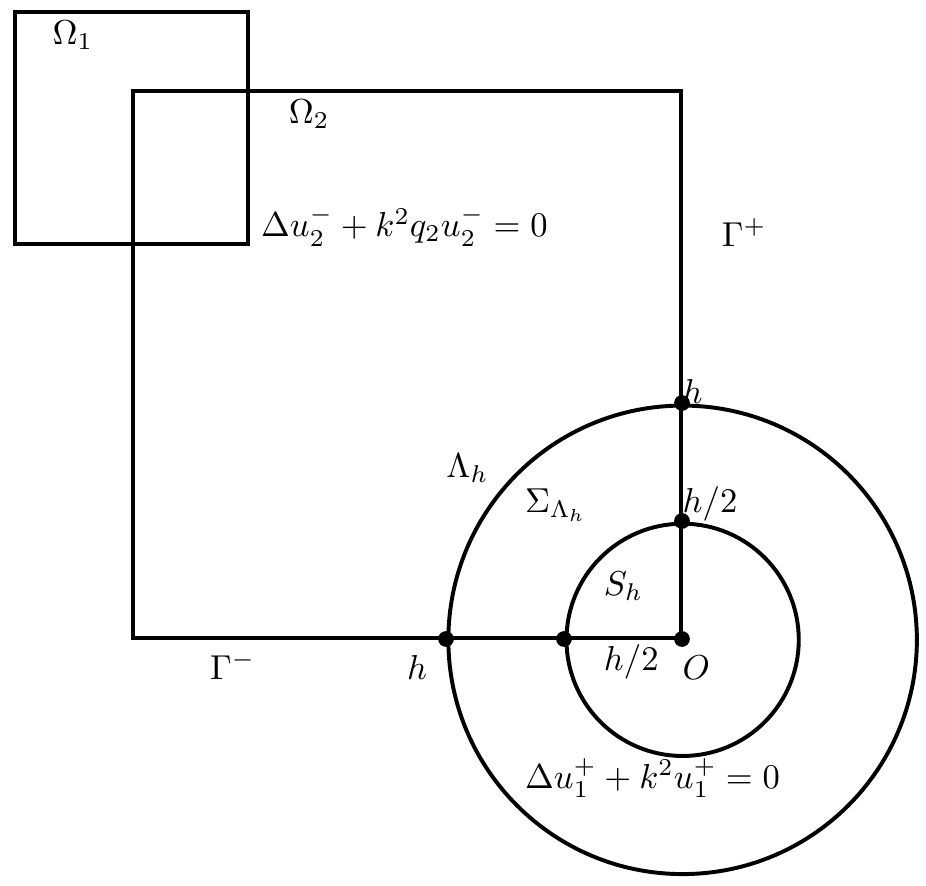}
  \caption{Schematic illustration of the geometry setup in the proof of Theorem~\ref{Th:4.1}.}
  \label{fig:geometry}
\end{figure}

{\bl 
The determination of the geometric shape of a conductive scatterer can be established in Theorem \ref{Th:4.1} by using Theorem~\ref{Th:1.2} and a contradiction argument. The technical requirement $qw\in C^\alpha(\overline S_h)$ in Theorem \ref{Th:1.2} can be easily fulfilled. Indeed,  this condition can be derived by using the classical results on the singular behaviours of the solutions to elliptic PDEs in a corner domain \cite{Dauge88,Grisvard,Cos,CN}. In fact, it is known that the solution can be decomposed into a singular part and a regular part, where the singular part is of a H\"older form that depends on the corner geometry as well as the boundary and the right-hand inputs. For our subsequent use, we first give the following result in a relatively simple scenario. 

In \eqref{eq:model}, by the standard PDE theory (see e.g. \cite{McLean}), we know that the solution $u$ is real-analytic away from the conductive interface. In  Lemma~\ref{lem41} as follows, we further establish the H\"older-regularity of the solution up to the conductive interface, especially to the vertex corner point. Denote
$$
	S_{2h}= W\cap B_{2h},\quad \Gamma_{2h}^{\pm }= \Gamma^{\pm } \cap B_{2h},
$$
where $W$ is the sector defined in \eqref{eq:W}, $B_{2h}$ is an open ball centered at $0$ in $\mathbb R^2$ with the radius $2h$ and $\Gamma^\pm$ are the boundaries of $W$. 
\begin{lemma}\cite[Lemma 3.4]{CDL3} \label{lem41}
Suppose that $u\in H^1(B_{2h})$ satisfies 
	\begin{equation}\label{eq:lem23}
\begin{cases}
\Delta u^-+k^2 q_- u^-=0 & \mbox{ in }\ S_{2h}, \\[5pt] 
\Delta u^+ +k^2  q_{+} u^+=0 & \mbox{ in }\ B_{2h} \backslash {\overline{ S_{2h}}}, \\[5pt] 
u^+= u^- & \mbox{ on }\ \Gamma_{2h}^\pm, 
\end{cases}
\end{equation}
where $u^+=u|_{{B_{2h}\backslash {\overline{ S_{2h}}}}}$, $u^-=u|_{S_{2h}}$ and $q_{\pm}$, $k$ are complex constants. Assume that $u^+$ and $u^-$ are respectively real analytic in ${B_{2h}\backslash {\overline{ S_{2h}}}}$ and $S_{2h}$. There exists $\alpha\in (0, 1)$ such that $u^- \in C^\alpha(\overline{S_h})$, where $S_h$ is defined in \eqref{eq:sh}. 
\end{lemma}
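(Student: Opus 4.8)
The plan is to localize the analysis to the corner vertex $0$ and invoke the classical singular-function expansion for elliptic transmission problems in a planar domain with a conical point. First I would observe that along the open edges $\Gamma_{2h}^{\pm}\setminus\{0\}$ and in the open subregions $S_{2h}$ and $B_{2h}\setminus\overline{S_{2h}}$ the two pieces $u^-$ and $u^+$ are real-analytic by hypothesis, so $u^-\in C^\alpha$ on every compact subset of $\overline{S_h}\setminus\{0\}$ for any $\alpha\in(0,1)$; the only possible loss of regularity occurs at the vertex $0$. Hence it suffices to establish Hölder continuity of $u^-$ in a neighbourhood of $0$ intersected with $\overline{S_h}$. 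To this end I would move the zeroth-order terms to the right-hand side, writing $-\Delta u^{\pm}=F_\pm:=k^2q_\pm u^{\pm}$, and note that since $u^\pm\in H^1$ and we are in two dimensions, $F_\pm\in L^p$ for every $p<\infty$. This turns \eqref{eq:lem23} into a transmission problem for the Laplacian across the corner $\{\theta=\theta_m\}\cup\{\theta=\theta_M\}$ with data $F_\pm$ that are as integrable as needed, the zeroth-order contribution being a genuinely lower-order perturbation of the principal part.

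Near $0$ I would pass to polar coordinates $(r,\theta)$ and apply the Kondrat'ev--Dauge theory of elliptic problems on domains with conical points \cite{Dauge88,Grisvard,Cos,CN}. This yields a splitting $u=u_{\mathrm{reg}}+u_{\mathrm{sing}}$ in a neighbourhood of $0$, where $u_{\mathrm{reg}}$ lies in a weighted Sobolev space of sufficiently high order, hence is Hölder continuous up to $0$, and $u_{\mathrm{sing}}$ is a finite sum of singular functions of the form $c_j\,r^{\lambda_j}(\log r)^{m_j}\Phi_j(\theta)$. The exponents $\lambda_j$ are the roots, lying in the relevant strip, of the characteristic (Mellin-symbol) equation associated to the homogeneous Laplace transmission problem obtained by separating variables $r^{\lambda}\Phi(\theta)$ and imposing the interface conditions of \eqref{eq:lem23} on the two arcs into which $\theta_m,\theta_M$ divide the unit circle. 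A crucial structural simplification is that the principal parts on both sides are the \emph{same} operator $\Delta$, so there is no principal-symbol contrast across the interface and the exponent equation reduces to an explicit trigonometric relation governed by the opening angle $\theta_M-\theta_m$ and its complement. Since $u\in H^1(B_{2h})$, each angular profile $\Phi_j$ is bounded and only exponents with $\mathrm{Re}\,\lambda_j>0$ can occur, because $r^{\lambda}(\log r)^m\Phi(\theta)\in H^1$ near $0$ forces $\mathrm{Re}\,\lambda>0$, the constant mode $\lambda=0$ being harmless.

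With $\mathrm{Re}\,\lambda_j>0$ for every surviving singular term, each such term is Hölder continuous at the vertex: from $|r^{\lambda_j}(\log r)^{m_j}\Phi_j(\theta)|\le C_\varepsilon\,r^{\mathrm{Re}\,\lambda_j-\varepsilon}$ one gets $u_{\mathrm{sing}}\in C^{\beta}$ for any $\beta<\min_j\mathrm{Re}\,\lambda_j$, even in the presence of logarithmic factors. Combining this with the Hölder regularity of $u_{\mathrm{reg}}$ and the analyticity away from $0$, I would conclude $u^-\in C^\alpha(\overline{S_h})$ for any $\alpha\in\big(0,\min\{1,\min_j\mathrm{Re}\,\lambda_j\}\big)$, which proves the claim. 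The main obstacle is the exponent analysis of the middle step: one must set up the transmission characteristic equation carefully, verify that the spectrum meets the half-plane $\mathrm{Re}\,\lambda>0$ only (using the $H^1$ bound to discard non-positive exponents and to control any resonances producing logarithmic terms), and secure a strictly positive lower bound $\min_j\mathrm{Re}\,\lambda_j>0$ so that the resulting Hölder exponent is a genuine number in $(0,1)$; the reduction and the final Hölder estimate are then routine.
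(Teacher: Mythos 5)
Your overall route is exactly the one the paper points to: the paper does not prove Lemma~\ref{lem41} at all, but quotes it from \cite[Lemma 3.4]{CDL3} and sketches precisely the singular--plus--regular corner decomposition \cite{Dauge88,Grisvard,Cos,CN} that you invoke. The genuine gap is in your middle step. You propose to read off the admissible exponents $\lambda_j$ from the Mellin (characteristic) equation of the transmission problem obtained by ``imposing the interface conditions of \eqref{eq:lem23}''. But \eqref{eq:lem23} contains \emph{only} the matching of traces $u^+=u^-$; no condition whatsoever is imposed on the normal derivatives, so the flux jump $[\partial_\nu u]\in H^{-1/2}(\Gamma_{2h}^\pm)$ is a completely free unknown. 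For such an underdetermined problem the Mellin symbol is never injective: for \emph{every} $\lambda\in\mathbb{C}$ the piecewise-homogeneous harmonic functions $r^\lambda\Phi^\pm(\theta)$, with $\Phi^\pm$ spanned by $\cos\lambda\theta,\sin\lambda\theta$ in each of the two sectors, carry four free coefficients subject to only two continuity conditions, hence nontrivial solutions exist for all $\lambda$. There is therefore no discrete exponent set, no finite singular expansion, and no positive lower bound $\min_j\mathrm{Re}\,\lambda_j$; the Kondrat'ev--Dauge machinery simply does not apply to the problem as you (and, to be fair, the quoted statement) set it up. The failure is not merely technical: taking $q_+=q_-$ real and superposing trace-matched Fourier--Bessel modes $J_{\lambda_j}(k\sqrt{q_-}\,r)$ with $\lambda_j\to 0^+$ and rapidly decaying coefficients yields a function that is $H^1(B_{2h})$, piecewise Helmholtz, real analytic in each open piece and continuous across the interface, yet fails to be H\"older continuous at the vertex with any exponent. (The same issue already afflicts your first reduction: interior analyticity alone does not give H\"older continuity of $u^-$ up to the \emph{open} edges either, since the Dirichlet data there is a priori only $H^{1/2}$.)

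What makes the lemma true in the way it is actually used (proof of Theorem~\ref{Th:4.1}) is an additional structural input that your argument must import: there, $u^+=u_1^+$ solves the Helmholtz equation in the \emph{whole} ball $B_{2h}$, because the corner of $\Omega_2$ lies outside $\overline{\Omega}_1$; hence $u^+$ is real analytic across the interface and up to the vertex. Then $u^-$ solves a sectorial \emph{Dirichlet} problem whose boundary data is smooth up to the corner, and for the Dirichlet Laplacian in a sector of opening $\theta_M-\theta_m$ the Mellin exponents are the discrete positive numbers $n\pi/(\theta_M-\theta_m)$, $n\in\mathbb{N}$; with this, your decomposition-and-exponent argument (regular part H\"older, singular parts $r^{\lambda}(\log r)^m\Phi(\theta)$ with $\mathrm{Re}\,\lambda>0$ forced by $H^1$) goes through verbatim. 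Equivalently, one can keep the two-sided transmission viewpoint but must then add a second, flux-type (possibly conductive, hence lower-order) transmission condition, which restores ellipticity of the Mellin symbol and discreteness of its spectrum. Either of these repairs your proof; without one of them, the key step fails.
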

}

\begin{theorem}\label{Th:4.1}
Consider the conductive scattering problem \eqref{eq:model} associated with two conductive scatterers $(\Omega_j; k, d, q_j, \eta_j)$, $j=1,2$, in $\mathbb{R}^2$. Let $u_\infty^j(\hat x; u^i)$ be the far-field pattern associated with the scatterer $(\Omega_j; k, d, q_j, \eta_j)$ and the incident field $u^i$. Suppose that $(\Omega_j; k, d, q_j, \eta_j)$, $j=1,2$ are admissible and
\begin{equation}\label{eq:nn1}
u_\infty^1(\hat x; u^i)=u_\infty^2(\hat x; u^i)
\end{equation}
for all $\hat{x}\in\mathbb{S}^{1}$ and a fixed incident wave $u^i$. Then 
\begin{equation}\label{eq:nn3}
\Omega_1\Delta\Omega_2:=\big(\Omega_1\backslash\Omega_2\big)\cup \big(\Omega_2\backslash\Omega_1\big)
\end{equation}
cannot possess a corner. Hence, if $\Omega_1$ and $\Omega_2$ are convex polygons in $\mathbb{R}^2$, one must have
\begin{equation}\label{eq:nn4}
\Omega_1=\Omega_2. 
\end{equation}
\end{theorem}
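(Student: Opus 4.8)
The plan is to argue by contradiction via a Schiffer-type scheme combined with the vanishing result of Theorem~\ref{Th:1.2}. First I would exploit the far-field agreement \eqref{eq:nn1}: by Rellich's lemma together with unique continuation, the identity $u_\infty^1=u_\infty^2$ on $\mathbb{S}^1$ forces the two scattered fields to coincide, $u_1^s=u_2^s$, throughout the unbounded connected component $G$ of $\mathbb{R}^2\setminus\overline{\Omega_1\cup\Omega_2}$. Since both scattering problems share the same incident field $u^i$, this yields $u_1^+=u_2^+$ in $G$.

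Next, suppose for contradiction that $\Omega_1\Delta\Omega_2$ possesses a corner. Standard geometric considerations let me assume, without loss of generality, that this corner is a vertex $x_c$ of $\partial\Omega_1$ admitting a small ball $B_h(x_c)$ with $B_h(x_c)\cap\Omega_2=\emptyset$, so that $S_h(x_c)=\Omega_1\cap W_{x_c}(\theta_W)$ with $\theta_W\neq\pi$. On $B_h(x_c)$ the field $u_2$ solves the free Helmholtz equation and is therefore real-analytic, in particular $u_2\in H^2(S_h(x_c))$. I then set $v:=u_2$ and $w:=u_1^-$. Because $u_1^+=u_2$ in the exterior part of $B_h(x_c)$ and $u_2$ extends analytically across $\Gamma_h^\pm(x_c)$, the conductive transmission conditions of \eqref{eq:model} on $\partial\Omega_1$ translate into $w=v$ and $\partial_\nu v+\eta_1 v=\partial_\nu w$ on $\Gamma_h^\pm(x_c)$, while $\Delta w+k^2 q_1 w=0$ and $\Delta v+k^2 v=0$ in $S_h(x_c)$. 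Hence $(v,w)$ is precisely a pair of conductive transmission eigenfunctions localized at the corner, of the type covered by Remark~\ref{rem:th1.1}.

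I would then verify the hypotheses of Theorem~\ref{Th:1.2}. The $H^2$-smoothness of $v$ holds by analyticity; by admissibility $q_1$ is constant on $S_h(x_c)$, so Lemma~\ref{lem41} gives $u_1^-\in C^\alpha(\overline{S_h(x_c)})$ and thus $q_1 w\in C^\alpha(\overline{S_h(x_c)})$; the boundary parameter satisfies $\eta_1\in C^\alpha(\overline{\Gamma_h^\pm(x_c)})$; and $v-w\in H^2(\Sigma_{\Lambda_h}(x_c))$, since away from the vertex both fields are $H^2$ (the interface there being a straight segment across which $q_1$ is constant). In the generic case $\eta_1(x_c)\neq 0$ — which is exactly the physically relevant situation $\eta=i\omega\gamma\mu_0\neq 0$ of Remark~\ref{Pro} — Theorem~\ref{Th:1.2} applies under the angle condition $\theta_W\neq\pi$ and yields $v(x_c)=w(x_c)=0$; the degenerate case $\eta_1(x_c)=0$ reduces to the interior transmission setting and the same conclusion follows from Remark~\ref{rem:2.5}. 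Since $v=u_2$ is continuous at $x_c$, the averaged quantity in \eqref{eq:nn2} equals $|u_2(x_c)|=0$, contradicting the non-vanishing requirement (c) of Definition~\ref{def:adm} for the admissible scatterer $(\Omega_2;k,d,q_2,\eta_2)$. This contradiction shows that $\Omega_1\Delta\Omega_2$ cannot possess a corner, which is \eqref{eq:nn3}.

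Finally, for the polygonal case, if $\Omega_1$ and $\Omega_2$ are convex polygons with $\Omega_1\neq\Omega_2$, then $\Omega_1\Delta\Omega_2$ is a nonempty polygon and necessarily has at least one corner, contradicting the statement just proved; hence $\Omega_1=\Omega_2$, which is \eqref{eq:nn4}. I expect the main obstacle to be the second and third steps, namely rigorously reducing the two-domain scattering data to a single localized conductive transmission eigenfunction pair at a corner lying in the exterior of the other scatterer, and checking that all regularity hypotheses of Theorem~\ref{Th:1.2} genuinely hold there — in particular the interior condition $v-w\in H^2(\Sigma_{\Lambda_h}(x_c))$ and the $C^\alpha$-bound on $q_1 w$ supplied by Lemma~\ref{lem41}.
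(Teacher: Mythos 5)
Your proposal is correct and follows essentially the same route as the paper's own proof: Rellich's lemma to identify the scattered fields, localization at a corner of $\Omega_1\Delta\Omega_2$ lying outside the other scatterer, identification of the free field and the interior field as a localized pair of conductive transmission eigenfunctions, the regularity inputs from Lemma~\ref{lem41} and boundary elliptic regularity for $v-w\in H^2(\Sigma_{\Lambda_h})$, and then Theorem~\ref{Th:1.2} (or Remark~\ref{rem:2.5} when $\eta$ vanishes) to force the total field to vanish at the vertex, contradicting admissibility condition (c). The only difference is the immaterial swap of the roles of $\Omega_1$ and $\Omega_2$ (the paper places the corner on $\partial\Omega_2$ outside $\overline{\Omega_1}$), so the two arguments coincide.
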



\begin{proof}
 By contradiction, we assume that there is a corner contained in $\Omega_1\Delta\Omega_2$. Without loss of generality we may assume that the vertex $O$ of the corner $\Omega_2 \cap W$ is such that $O \in \partial \Omega_2$ and $ O \notin \overline{\Omega}_1$. {\bl Without loss of generality, we may assume that $O$ is the origin of $\mathbb R^2$.}

	Since $u_\infty^1(\hat x; u^i)=u_\infty^2(\hat x; u^i)  $ for all $\hat x \in  {\mathbb S}^1$, applying Rellich's Theorem (see \cite{CK}),  we know that $u_1^s=u_2^s$ in $\R^2 \backslash (  \overline{\Omega}_1 \cup \overline{\Omega}_2 )$. Thus
	\begin{equation}\label{eq:u1u2}
		u_1(x)=u_2(x)
	\end{equation}
	for all $x \in \R^2 \backslash (  \overline{\Omega}_1 \cup \overline{\Omega}_2 )$. Following the notations in \eqref{eq:sh}, we have from \eqref{eq:u1u2} that
	$$
	u_2^-=u_2^+=u_1^+,\quad \partial u_2^- = \partial u_2^+ + \eta_2 u_2^+=\partial u_1^+ + \eta_2 u_1^+ \mbox{ on } \Gamma_h^\pm,
	$$
	where the superscripts $(\cdot)^-, (\cdot)^+$ stand for the limits taken from $\Omega_2$ and $\mathbb{R}^2\backslash\overline{\Omega_2}$, respectively. Moreover, suppose the neighbourhood {\bl $B_{2h}$} is  sufficiently  small such that
	$$
	\Delta u_1^+ + k^2 u_1^+ =0,\quad \Delta u_2^-  +k^2 q_2 u_2^-=0 \mbox{ in } {\bl B_{2h}}. 
	$$

{\bl 	It is clear that  $u_1^+$ and $u_2^-$ are respectively real analytic in ${B_{2h}\backslash {\overline{ S_{2h}}}}$ and $S_{2h}$. Since $q_2 \big |_{S_h} $ is a constant, by virtue of Lemma \ref{lem41}, we know that  $u_2^- \in C^\alpha(\overline{S_h})$, which implies that 
\begin{equation}\label{eq:416}
	q_2u_2^- \in C^\alpha(\overline{S_h}).
\end{equation}  }
	
Clearly $u_1^+\in H^2(S_h)$ and $u_2^-\in H^1(S_h)$. Now we prove that
$$
u_1^+ - u_2^-\in H^2( \Sigma_{\Lambda_h} ),
$$
where $\Sigma_{\Lambda_h}$ is defined in \eqref{eq:sh}. We first note that on the boundary $\Gamma^\pm_h$, one has $u_2^-=u_1^+$, where $u_1^+ \in H^{3/2}(\Gamma^\pm_h )$ from the trace theorem. {\bl Denote
\begin{equation}\label{add7}
	D_1^+ =B_{h/4}(x_{\Gamma^+}) \cap \Sigma_{\Lambda_h},\quad D_1^- =B_{h/4}(x_{\Gamma^-}) \cap \Sigma_{\Lambda_h},
\end{equation}
where $x_{\Gamma^+}$ and $x_{\Gamma^-}$ are the mid-points of $\Gamma^{+}_{(h/2,h)}  $ and $\Gamma^{-}_{(h/2,h)}  $, respectively. 
}

Since $\Gamma_h^+ \in C^{1,1}$, from \cite[Theorem 4.18]{McLean}, we have the following regularity estimate for $u_2^-$ up to the boundary $\Gamma^{+}_{(h/2,h)}  $ of $\Sigma_{\Lambda_h}$:
$$
{\bl 
\left\|u_2^- \right \|_{H^2( D_1^+ )} \leq C\left(\left\|u_2^- \right \|_{H^1(S_h )}+ \left\|u_1^+ \right \|_{H^{3/2}( \Gamma^{+}_{h} )}  \right),} 
$$
where $C>0$ is a constant and $D_1^+ $ is defined in \eqref{add7}.
Using the similar argument, we can prove that $u_2^-$ has $H^2$-regularity up to the boundary $\Gamma^{-}_{(h/2,h)}  $ of $\Sigma_{\Lambda_h}$. Therefore $u_2^- \in H^2( \Sigma_{\Lambda_h} )$ {\bl by using the interior regularity of the standard elliptic PDE theory}, which means that $u_1^+ - u_2^- \in H^2( \Sigma_{\Lambda_h} )$. Since  $(\Omega_j; k,d, q_j, \eta_j)$, $j=1,2$, are admissible, we know that $\eta_j\in C^\alpha(\overline{\Gamma}_h^\pm)$.  {\bl Noting \eqref{eq:416}} and by applying Theorem  \ref{Th:1.2}, if $\eta_2(0)\neq 0$, and Remark~\ref{rem:2.5} if $\eta_2(0)=0$ on $\Gamma_{ h}^\pm$, and also utilizing the fact that $u_1$ is continuous at the vertex $0$, we have 
	$$
	u_1(0)=0, 
	$$
	which  contradicts to the admissibility condition (c) in Definition \ref{def:adm}.    
	
	The proof is complete. 
\end{proof}

{{Based on Definition \ref{def:adm}, if we further assume that the surface conductive parameter $\eta$ is constant, we can recover $\eta$ simultaneously once the admissible conductive  scatterer $\Omega$ is determined. However, in determining the surface conductive parameter, we need to assume that $q_1=q_2:=q$ are known. 
		
		\begin{theorem}\label{Th: unique eta}
			Consider the conductive scattering problem \eqref{eq:model} associated with the  admissible  conductive scatterers $(\Omega_j; k, d, q, \eta_j)$, where $\Omega_j=\Omega$ for $j=1,2$ and $\eta_j\neq0$, $j=1,2$, are two constants. Let $u_{\infty}^j(\hat{x}; u^i)$ be the far-field pattern associated with the scatterer $(\Omega; k, d, q, \eta_j)$ and the incident field $u^i$. Suppose that $(\Omega; k, d, q, \eta_j)$, $j=1,2$, are admissible and 
			\begin{equation}\label{eq:far}
			u_{\infty}^1(\hat{x}; u^i)=u_{\infty}^2(\hat{x}; u^i)
			\end{equation} 
			for all $\hat{x}\in\mathbb{S}^1$ and a fixed incident wave $u^i$. Then if $k$ is not an eigenvalue of the partial differential operator $\Delta+k^2q$ in $H_0^1(\Omega)$, we have $\eta_1=\eta_2$. 
		\end{theorem}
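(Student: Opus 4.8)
The plan is to reduce the problem to a uniqueness statement for the interior Dirichlet problem for $\Delta+k^2q$ and then to exploit the conductive transmission conditions to isolate the jump $\eta_1-\eta_2$. First I would invoke Rellich's lemma together with the unique continuation principle (cf. \cite{CK}): since the two far-field patterns coincide by \eqref{eq:far} and the exterior region $\mathbb{R}^2\backslash\overline{\Omega}$ is connected (recall that $\Omega$ is simply connected and $\Omega_1=\Omega_2=\Omega$), the scattered fields agree, so that $u_1^+=u_2^+$ throughout $\mathbb{R}^2\backslash\overline{\Omega}$. Consequently their Cauchy data on $\partial\Omega$ coincide, i.e. $u_1^+=u_2^+$ in $H^{1/2}(\partial\Omega)$ and $\partial_\nu u_1^+=\partial_\nu u_2^+$ in $H^{-1/2}(\partial\Omega)$, the conormal derivatives being well defined because the exterior fields lie (locally) in $H^1_\Delta$.

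Next I would set $W:=u_1^--u_2^-$ and read off its boundary value problem from the transmission conditions in \eqref{eq:model}. Since both interior fields solve $\Delta u_j^-+k^2q u_j^-=0$ with the same $q$, and since $u_j^-=u_j^+$ and $\partial_\nu u_j^-=\partial_\nu u_j^++\eta_j u_j^+$ on $\partial\Omega$, the common exterior Cauchy data force
\[
\Delta W+k^2q W=0 \ \text{ in }\ \Omega, \qquad W=0, \quad \partial_\nu W=(\eta_1-\eta_2)f \ \text{ on }\ \partial\Omega,
\]
where $f:=u^+|_{\partial\Omega}\in H^{1/2}(\partial\Omega)$ denotes the common Dirichlet trace. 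Because $W\in H^1(\Omega)$ has vanishing trace, $W\in H_0^1(\Omega)$; the hypothesis that $k$ is not an eigenvalue of $\Delta+k^2q$ on $H_0^1(\Omega)$ then yields $W\equiv 0$ in $\Omega$. Taking conormal derivatives gives $(\eta_1-\eta_2)f=0$ in $H^{-1/2}(\partial\Omega)$.

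Finally I would argue by contradiction that $f\not\equiv 0$. If $\eta_1\neq\eta_2$, the previous identity forces $f\equiv 0$, so $u_1^-=u_2^-=:u^-$ (from $W\equiv 0$) has zero Dirichlet trace and solves $\Delta u^-+k^2q u^-=0$ in $\Omega$; applying the non-eigenvalue hypothesis once more gives $u^-\equiv 0$, whence the total field $u$ vanishes identically in $\Omega$, contradicting the admissibility condition \eqref{eq:nn2} in Definition \ref{def:adm}. Hence $\eta_1=\eta_2$. The only delicate points are the rigorous justification of the equality of the exterior Cauchy data on the Lipschitz boundary, which I would handle through the $H^1_\Delta$ trace theory underlying Lemma \ref{lem:green} rather than classical normal derivatives, and the bookkeeping needed to read the Neumann jump correctly off the conductive transmission condition; I do not expect either to constitute a genuine obstacle, so the argument should be short.
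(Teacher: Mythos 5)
Your proposal is correct and follows essentially the same route as the paper's proof: Rellich's lemma yields equality of the exterior fields and hence of the Cauchy data, the difference $W=u_1^--u_2^-$ solves the homogeneous Dirichlet problem so the non-eigenvalue hypothesis forces $W\equiv 0$ and $(\eta_1-\eta_2)u_1^-=0$ on $\partial\Omega$, and a second application of the same hypothesis kills $u_1^-$ in $\Omega$ when $\eta_1\neq\eta_2$. The only divergence is the final contradiction: you invoke the admissibility condition \eqref{eq:nn2} directly once $u_1^-\equiv 0$ in $\Omega$, whereas the paper pushes the vanishing to the exterior (zero Cauchy data on $\partial\Omega$ give $u_1\equiv 0$ in $\mathbb{R}^2$, so $u_1^s=-u^i$ violates the Sommerfeld radiation condition); both conclusions are legitimate under the stated hypotheses.
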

	
%

%

	    \begin{proof}	
	    	Since $u_{\infty}^1(\hat{x}; u^i) =u_\infty^2({\hat{x}}; u^i)$ for all $\hat{x}\in\mathbb{S}^1$, we can derive that $u_1^+=u_2^+$ for all $x\in\mathbb{R}^2\backslash\overline{\Omega}$ and thus $\partial_\nu u_1^+=\partial_\nu u_2^+$ on $\partial\Omega$. Combining with the transmission condition in the scattering problem \eqref{eq:model}, we deduce that 
			\begin{equation}\notag
			u_1^-=u_1^+=u_2^+=u_2^-\ \mbox{ on }\ \partial\Omega,
			\end{equation}
			Thus, we have
			\begin{equation}\notag
			\partial_\nu(u_1^--u_2^-)=\partial_\nu(u_1^+-u_2^+)+\eta_1u_1^+-\eta_2u_2^+=(\eta_1-\eta_2)u_1^- \ \mbox{ on }\ \partial\Omega.
			\end{equation}
			Define $v:=u_1^--u_2^-$. Then $v$ fulfills
			\begin{equation}\label{eq:v}
			\begin{cases}
			(\Delta+k^2q)v=0 & \mbox{ in }\ \Omega,\\
			v=0 &\mbox{ on }\ \partial\Omega,\\
			\partial_\nu v=(\eta_1-\eta_2)u_1^- &\mbox{ on }\ \partial\Omega.
			\end{cases}	
			\end{equation}
			Since $k$ is not an eigenvalue of the operator $\Delta+k^2q$ in $H_0^1(\Omega)$, 
			hence one must have $v=0$ to \eqref{eq:v}. Substituting this into the Neumann boundary condition of \eqref{eq:v}, we know that $(\eta_1-\eta_2)u_1^-=\partial_\nu v=0$ on $\partial\Omega$.
			
			Next, we prove the uniqueness of $\eta$ by contradiction. Assume that $\eta_1\neq\eta_2$. Since $(\eta_1-\eta_2)u_1^-=0$ on $\partial\Omega$ and $\eta_j$, $j=1,2$ are constants, we can deduce that $u_1^-=0$ on $\partial\Omega$. Then $u_1^-$ satisfies
			\begin{equation}\notag
			\begin{cases}
			(\Delta +k^2q)u_1^-=0 & \mbox{ in }\ \Omega,\\
			u_1^-=0 & \mbox{ on }\ \partial\Omega.
			\end{cases}
			\end{equation}
		    Similar to \eqref{eq:v}, this Dirichlet problem also only has a trivial solution $u_1^-=0$ in $\Omega$, since $k$ is not an eigenvalue of $\Delta +k^2q$. Then, we can derive  $u_1^+=u_1^-=0$ and 
			\begin{equation}\notag
			\partial_\nu u_1^-=\partial_\nu u_1^++\eta_1u_1^+=\partial_\nu u_1^+=0 \mbox{ on } \partial\Omega,
			\end{equation}
			which implies that $u_1\equiv0$ in $\mathbb{R}^2$ and thus $u_1^s=-u^i$. This contradicts to the fact that $u^s_1$ satisfies the Sommerfeld radiation condition. 
			
			The proof is complete. 
		\end{proof}

\begin{remark}
In Theorem \ref{Th: unique eta}, it is required that $k$ is not an eigenvalue of $\Delta+k^2q$ in $H_0^1(\Omega)$. Clearly, if $q$ is negative-valued in $\Omega$ or $\Im q\neq 0$ in $\Omega$, this condition is fulfilled. On the other hand, if $q$ is positive-valued in $\Omega$, then this condition can be readily fulfilled when $k\in\mathbb{R}_+$ is sufficiently small. 
\end{remark}
}}

\section*{Acknowledgement}

The work  of H Diao was supported in part by the Fundamental Research Funds for the Central Universities under the grant 2412017FZ007. The work of H Liu was supported by a startup fund from City University of Hong Kong and the Hong Kong RGC grants (projects 12302017, 12301218 and 12302919).

\end{document}